\newenvironment{proof}{\par \noindent{\sc Proof. }}{ \hfill $\square$ \vspace{0.4cm} \par}
\newenvironment{equation*}{$$}{$$ \noindent $\! \!$}
\newtheorem{theorem}{Theorem}[section]
\newtheorem{lemma}[theorem]{Lemma}
\newtheorem{proposition}[theorem]{Proposition}
\newtheorem{corollary}[theorem]{Corollary}
\newtheorem{definition}[theorem]{Definition}
\newcommand {\B}    {{\Bbb B}}
\newcommand {\C}    {{\Bbb C}}
\newcommand {\D}    {{\Bbb D}}
\newcommand {\R}    {{\Bbb R}}
\newcommand {\Z}    {{\Bbb Z}}
\newcommand {\T}    {{\Bbb T}}
\newcommand {\Aa}    {\mathcal{A}}
\newcommand {\Fa}    {\mathcal{F}}
\newcommand {\Oa}    {\mathcal{O}}
\newcommand {\Pa}    {\mathcal{P}}
\newcommand {\Ha}    {\mathcal{H}}
\newcommand {\Va}    {\mathcal{V}}
\newcommand {\esssp}  {\hbox{\rm ess-sp}\,}
\newcommand {\spec}   {\hbox{\rm sp}\,}
\newcommand {\re}     {\hbox{\rm Re}\,}
\newcommand {\II}     {\hbox{\rm II}\,}
\newcommand {\Fol}    {\mathfrak{F}}
\begin{document}

\date{}
\title{Commutative algebras of Toeplitz operators \\ on the Reinhardt domains\thanks{This work was partially supported by CONACYT Projects 46936 and 44620, M\'exico.}}
\author{R. Quiroga-Barranco$^{\dag\, \ddag}$,
and N. Vasilevski$^\dag$ \\
$^\dag$Departamento de Matem\'aticas, CINVESTAV \\
Apartado Postal 14-740, 07000, M\'exico, D.F., M\'exico \\
$^\ddag$Centro de Investigaci\'on en Matem\'aticas \\
Apartado Postal 402, 36000, Guanajuato, Gto., M\'exico\\
e-mail: \ {\sf quiroga@cimat.mx} \\
{\sf nvasilev@math.cinvestav.mx} }

\maketitle

\begin{abstract}
Let $D$ be a bounded logarithmically convex complete Reinhardt
domain in $\mathbb{C}^n$ centered at the origin. Generalizing a
result for the one-dimensional case of the unit disk, we prove that
the $C^*$-algebra generated by Toeplitz operators with bounded
measurable separately radial symbols (i.e., symbols depending only
on $|z_1|$, $|z_2|$, ... , $|z_n|$) is commutative.

We show that  the natural action of the $n$-dimensional torus
$\mathbb{T}^n$ defines (on a certain open full measure subset of
$D$) a foliation which carries a transverse Riemannian structure
having distinguished geometric features. Its leaves are equidistant
with respect to the Bergman metric, and the orthogonal complement to
the tangent bundle of such leaves is integrable to a totally
geodesic foliation. Furthermore, these two foliations are proved to
be Lagrangian.

We specify then the obtained results for the unit ball.
\end{abstract}

\section{Introduction}

A family of recently discovered commutative $C^*$-algebras of
Toeplitz operators on the unit disk (see for details
\cite{Vasil01,Vasil03a}) can be classified as follows. Each pencil
of hyperbolic geodesics determines a set of symbols consisting of
functions which are constant on the corresponding cycles, the
orthogonal trajectories to geodesics forming a pencil. The
$C^*$-algebra generated by Toeplitz operators with such symbols
turns out to be commutative. Moreover, these commutative properties
do not depend at all on smoothness properties of symbols: the
corresponding symbols can be merely measurable. The prime cause
appears to be the geometric configuration of level lines of symbols.
Further it has been proved in \cite{GrudQuirogaVasil05} that,
assuming some natural conditions on the ``richness'' of the symbol
set, the above symbol sets are the only possible which gnerate
commutative Toeplitz operator algebras on each (commonly considered)
weighted Bergman space on the unit disk.

Recall that there are three different types of pencils of hyperbolic
geodesics: an elliptic pencil, which is formed by geodesics
intersecting in a single point, a parabolic pencil, which is formed
by parallel geodesics, and a hyperbolic pencil, which is formed by
disjoint geodesics, i.e., by all geodesics orthogonal to a given
one. Note, that in all cases the cycles are equidistant in the
hyperbolic metric.

The model case for elliptic pencils is when the geodesics intersect
at the origin. In this case the geodesics are diameters and the
cycles are the concentric circles centered at the origin. All other
elliptic pencils can be obtained from this model by means of
M\"obius transformations. The commutative Toeplitz $C^*$-algebra for
the elliptic model case is generated by Toeplitz operators with
radial symbols.

As proved in \cite{GrudKarapVasil03}, the $C^*$-algebras generated
by Toeplitz operators with radial symbols, acting on the weighted
Bergman spaces over the unit ball $\mathbb{B}^n$, are commutative as
well.

In the present paper, we consider a more deep and natural
multidimensional analog of the elliptic model pencil on the unit
disk. We study Toeplitz operators on weighted Bergman spaces over
bounded Reinhardt domains in $\mathbb{C}^n$, and prove, in particular, that the
$C^*$-algebra generated by Toeplitz operators with bounded
measurable separately radial symbols (i.e., symbols depending only on $|z_1|$, $|z_2|$, ... , $|z_n|$) is commutative.

Note that this single result can be also obtained directly by just
calculating the matrix elements $\langle T_a z^p, z^q\rangle$, but
we deliberately follow a more general procedure used in all model
cases on the unit disk (see, for example,  in \cite{Vasil01}). This permits us to construct an analog of the Bargman transform (the operator $R$ restricted on the (weighted) Bergman space), obtain the decomposition of the Bergman projection by means of $R^*$ and $R$, and prepare these operators for the future use.

The second important question treated in the paper is the
understanding of an adequate geometric description which generalizes
geodesics and cycles of the unit disk to a multidimensional case.
Each complete bounded Reinhardt domain $D$ in $\mathbb{C}^n$
centered at the origin admits a natural action of the
$n$-dimensional torus $\mathbb{T}^n$, and this action is isometric
with respect to the Bergman metric in $D$. On a certain open full
measure subset of $D$ this action defines a foliation whose leaves
are all diffeomorphic to $\mathbb{T}^n$. Furthermore, such foliation
carries a transverse Riemannian structure having distinguished
geometric features. First, the leaves are equidistant with respect
to the Bergman metric, and second, the direction perpendicular to
the leaves is totally geodesic, every geodesic which starts in the
perpendicular direction to a leaf stays perpendicular to all other
leaves. Now geometrically: the $C^*$-algebra generated by Toeplitz
operators with bounded measurable symbols, which are constant on the
leaves of the above foliation, is commutative. We prove also that
the orthogonal complement to the tangent bundle of the $\T^n$-orbits
is integrable, thus providing a pair of natural orthogonal
foliations to a Reinhardt domain. Moreover, it turns out that both
foliations are Lagrangian.

It is worth mentioning that the above geometric properties hold for
each pencil of geodesics on the unit disk, but do not hold, for
example, for the case of Toeplitz operators on the unit ball with
radial symbols (the corresponding foliations are not Lagrangian).

We show that the unit ball $\mathbb{B}^n$ in $\mathbb{C}^n$ is the
only Reinhardt domain which is, at the same time, bounded symmetric
and irreducible. Then, we provide a detailed description of the
extrinsic geometry of the foliation by $\T^n$-orbits in the unit
ball. In particular, it is shown, through a computation of its
second fundamental form, that certain geodesics in this foliation
have geodesic curvatures with the same behavior found in the
elliptic pencil of the unit disk. We then consider one-parameter
families of weighted Bergman spaces in $\mathbb{B}^n$, commonly used
in operator theory, and specify the results obtained to this special
case.

Finally, using C. Fefferman's expression for the Bergman kernel of
strictly pseudoconvex domains, we show that for any bounded complete
Reinhardt domain with such pseudoconvexity property, the extrinsic
curvature of the foliation coming from the $\T^n$-action has the
same asymptotic behavior at (suitable) boundary points as the one
observed in the unit ball.

\section{Bergman space on the Reinhardt domains} \label{se:Reinh-Berg}
\setcounter{equation}{0}

Let $D$ be a bounded logarithmically convex complete Reinhardt
domain in $\mathbb{C}^n$ centered at the origin. Denote by $\tau
(D)$ its base, i.e., the set
\begin{equation*}
    \tau(D)= \{r=(r_1,..., r_n)=(|z_1|,...,|z_n|)\, : \
    z=(z_1,...,z_n) \in D\},
\end{equation*}
which belongs to $\R^n_+=\R_+\times ... \times \R_+$.

Consider a positive measurable function (weight) $\mu
(r)=\mu(r_1,...,r_n)$, $r \in \tau(D)$, such that
\begin{equation*}
  \int_{D} \mu (|z|) dv(z)= (2\pi)^n \int_{\tau(D)} \mu(r)
  r dr < \infty,
\end{equation*}
where $dv(z)=dx_1dy_1...dx_ndy_n$ is the usual Lebesgue measure in
$\C^n$, $|z|=(|z_1|,...,|z_n|)$, and $rdr=\prod_{k=1}^n r_kdr_k$. We
assume as well that the weight-function $\mu(r)$ is bounded in some
neighborhood of the origin and does not vanish in this neighborhood.

Introduce the weighted Hilbert space $L_2(D, \mu)$ with the scalar
product
\begin{equation*}
    \langle f , g \rangle = \int_D f(z)\, \overline{g(z)}\, \mu(|z|)
    \, dv(z),
\end{equation*}
and its subspace, the weighted Bergman space $\Aa^2_\mu(D)$, which
consists of all functions analytic in~$D$. We denote as well by
$B_{D,\mu}$ the (orthogonal) Bergman projection of $L_2(D, \mu)$
onto~$\Aa^2_\mu(D)$.

Passing to the polar coordinates $z_k=t_k r_k$, where $t_k \in
\T=S^1$, $k=1,...,n$, and under the identification
\begin{equation*}
    z=(z_1,...,z_n)=(t_1r_1,...,t_nr_n)=(t,r),
\end{equation*}
where $t=(t_1,...,t_n) \in \T^n=\T \times ... \times \T$,
$r=(r_1,..., r_n) \in \tau(D)$, we have $D= \T^n \times \tau(D)$
and
\begin{equation*}
    dv(z)= \prod_{k=1}^n \frac{dt_k}{it_k} \prod_{k=1}^n r_kdr_k.
\end{equation*}
That is we have the following decomposition
\begin{equation*}
    L_2(D,\mu)= L_2(\T^n) \otimes L_2(\tau(D), \mu),
\end{equation*}
where
\begin{equation*}
    L_2(\T^n) = \bigotimes_{k=1}^n L_2(\T,\frac{dt_k}{it_k})
\end{equation*}
and the measure $d\mu$ in $L_2(\tau(D), \mu)$ is given by
\begin{equation*}
    d\mu=\mu(r_1,...,r_n)\,\prod_{k=1}^n r_kdr_k.
\end{equation*}
We note that the Bergman space $\Aa^2_{\mu}(D)$ can be
alternatively defined as the (closed) subspace of $L_2(D, \mu)$
which consists of all functions satisfying the equations
\begin{equation*}
    \frac{\partial}{\partial \overline{z}_k} \varphi =
    \frac{1}{2} \left(\frac{\partial}{\partial x_k} + i \frac{\partial}{\partial
    y_k}\right)\varphi = 0, \ \ \ \ \ k=1,...,n,
\end{equation*}
or, in the polar coordinates,
\begin{equation*}
    \frac{\partial}{\partial \overline{z}_k} \varphi =
    \frac{t_k}{2} \left(\frac{\partial}{\partial r_k} - \frac{t_k}{r_k}
     \frac{\partial}{\partial t_k}\right)\varphi = 0, \ \ \ \ \
     k=1,...,n.
\end{equation*}
Define the discrete Fourier transform ${\Fa}: L_2(\T) \rightarrow
l_2=l_2(\Z)$ by
\begin{equation*}
  {\Fa}: \,f \longmapsto c_n=\frac{1}{\sqrt{2\pi}} \int_{S^1} f(t)\,t^{-n}\,
  \frac{dt}{it}, \ \ \ n \in \Z.
\end{equation*}
The operator $\Fa$ is unitary and
\begin{equation*}
  {\Fa}^{-1}= \Fa^*: \, \{c_n\}_{n \in \Z} \longmapsto f=\frac{1}{\sqrt{2\pi}}
  \sum_{n\in \Z} c_n\,t^n.
\end{equation*}
It is easy to check (see, for example, \cite{Vasil01}, Subsection
4.1) that the operator
\begin{equation*}
    u=(\Fa \otimes I)\frac{t}{2} \left(\frac{\partial}{\partial r} - \frac{t}{r}
     \frac{\partial}{\partial t}\right)(\Fa^{-1} \otimes I) : l_2
     \otimes L_2((0,1), rdr) \longrightarrow  l_2  \otimes L_2((0,1), rdr)
\end{equation*}
acts as follows
\begin{equation*}
    u\, : \ \{c_k(r)\}_{k \in \Z} \longmapsto \left\{ \frac{1}{2}
    \left(\frac{\partial}{\partial r} -
    \frac{k-1}{r}\right)c_{k-1}(r)\right\}_{k \in \Z}.
\end{equation*}
Introduce the unitary operator
\begin{equation*}
    U=\Fa_{(n)} \otimes I \, : \ L_2(\T^n) \otimes L_2(\tau(D), \mu)
    \longrightarrow \ l_2(\Z^n) \otimes L_2(\tau(D), \mu),
\end{equation*}
where $\Fa_{(n)}= \Fa \otimes ... \otimes \Fa$. Then the image
$\Aa_1^2= U(\Aa^2_{\mu}(D))$ of the Bergman space is the closed
subspace of $l_2(\Z^n) \otimes L_2(\tau(D), \mu)$ which consists
of all sequences $\{c_p(r)\}_{p\in \Z^n}$, $r=(r_1,...,r_n) \in
\tau(D)$, satisfying the equations
\begin{equation*}
    \frac{1}{2} \left(\frac{\partial}{\partial r_k} -
    \frac{p_k}{r_k}\right)c_{(p_1,...,p_k)}(r_1,...,r_n) = 0, \ \ \ \ \
    k=1,...,n.
\end{equation*}
These equations are easy to solve, and their general solutions
have the form
\begin{equation*}
    c_p(r)=\alpha_p c_p r^p, \ \ \ \ \ p=(p_1,...,p_n)\in \Z^n,
\end{equation*}
where $c_p \in \C$, $r^p=r_1^{p_1}\cdot ...\cdot r_n^{p_n}$, and
$\alpha_p=\alpha_{|p|}$ (\,$|p|=(|p_1|,...,|p_n|)$,\ in this occurrence) is given by
\begin{equation} \label{eq:alpha_p}
    \alpha_p= \left( \int_{\tau(D)} r^{2|p|}\, \mu(r)\,rdr
    \right)^{-\frac{1}{2}}
    =\left( \int_{\tau(D)} r_1^{2|p_1|}\cdot ... \cdot
    r_n^{2|p_n|}\, \mu(r_1,...,r_n)\,\prod_{k=1}^n r_kdr_k
    \right)^{-\frac{1}{2}}.
\end{equation}
Recall that each function $c_p(r)=\alpha_p c_p r^p$ has to be in
$L_2(\tau(D), \mu)$, which implies that $c_p= 0$ for each
$p=(p_1,...,p_n)$ such that at least one of $p_k < 0$,
$k=1,...,n$.

That is the space $\Aa_1^2 \subset l_2(\Z^n) \otimes L_2(\tau(D),
\mu)$ coincides with the space of all sequences
\begin{equation*}
    c_p(r)= \left\{
    \begin{array}{cl}
      \alpha_p c_p r^p, & p \in \Z_+^n = \Z_+ \times ... \times \Z_+ \\
      0,  & p \in \Z^n \setminus \Z_+^n  \\
    \end{array}
    \right. ,
\end{equation*}
and furthermore
\begin{equation*}
    \|\{c_p(r)\}_{p \in \Z_+^n}\|_{l_2(\Z^n) \otimes L_2(\tau(D),\mu)} =
    \|\{c_p\}_{p \in \Z_+^n}\|_{l_2(\Z^n)}.
\end{equation*}

Introduce now the isometric embedding
\begin{equation*}
    R_0\, : \ l_2(\Z_+^n) \longrightarrow l_2(\Z^n) \otimes L_2(\tau(D),\mu)
\end{equation*}
as follows
\begin{equation*}
    R_0 \, : \ \{c_p\}_{p \in \Z_+^n} \longmapsto
    c_p(r)= \left\{
    \begin{array}{cl}
      \alpha_p c_p r^p, & p \in \Z_+^n  \\
      0,  & p \in \Z^n \setminus \Z_+^n  \\
    \end{array}
    \right. .
\end{equation*}
Then the adjoint operator $R_0^* : l_2(\Z^n) \otimes
L_2(\tau(D),\mu) \longrightarrow l_2(\Z^n)$ is defined by
\begin{equation*}
    R_0^* \, : \ \{f_p(r)\}_{p \in \Z^n} \longmapsto
    \left\{ \alpha_p \int_{\tau(D)} r^p\, f_p(r)\, \mu(r_1,...,r_n)\,
    \prod_{k=1}^n r_kdr_k \right\}_{p \in \Z_+^n},
\end{equation*}
and it is easy to check that
\begin{eqnarray*}
  R_0^* R_0 = I &:& l_2(\Z_+^n) \longrightarrow l_2(\Z_+^n), \\
  R_0 R_0^* =P_1 &:& l_2(\Z^n) \otimes L_2(\tau(D),
                      \mu) \longrightarrow \Aa_1^2,
\end{eqnarray*}
where $P_1$ is the orthogonal projection of $l_2(\Z^n) \otimes
L_2(\tau(D),\mu)$ onto $\Aa_1^2$.

Summarizing the above we have
\begin{theorem}
The operator $R=R_0 U$ maps $L_2(D,\mu)$ onto $l_2(\Z_+^n)$, and
the restriction
\begin{equation*}
    R|_{\Aa^2_{\mu}(D)} \, : \ \Aa^2_{\mu}(D) \longrightarrow l_2(\Z_+^n)
\end{equation*}
is an isometric isomorphism.

The adjoint operator
\begin{equation*}
    R^*= U^*R_0 \, : \ l_2(\Z_+^n) \longrightarrow \Aa^2_{\mu}(D)
    \subset L_2(D,\mu)
\end{equation*}
is the isometrical isomorphism of $l_2(\Z_+^n)$ onto the subspace
$\Aa^2_{\mu}(D)$ of $L_2(D,\mu)$.

Furthermore
\begin{eqnarray*}
  R R^* = I &:& l_2(\Z_+^n) \longrightarrow l_2(\Z_+^n), \\
  R^* R =B_{D,\mu} &:& L_2(D,\mu) \longrightarrow \Aa^2_{\mu}(D),
\end{eqnarray*}
where $B_{D,\mu}$ is the Bergman projection of $L_2(D,\mu)$ onto
$\Aa^2_{\mu}(D)$.
\end{theorem}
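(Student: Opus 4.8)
The plan is to deduce every claim from the algebraic relations already recorded for the two constituents $U$ and $R_0$, so that essentially no new computation is needed: all the analytic work was done in solving the transformed Cauchy--Riemann equations above. Recall the ingredients: $U = \Fa_{(n)} \otimes I$ is unitary; $R_0$ is an isometric embedding with $R_0^* R_0 = I$ on $l_2(\Z_+^n)$ and $R_0 R_0^* = P_1$, the orthogonal projection onto $\Aa_1^2$; and, most importantly, $U$ carries $\Aa^2_\mu(D)$ onto $\Aa_1^2$. For the composition $R$ to map $L_2(D,\mu)$ into $l_2(\Z_+^n)$ one reads it as $R = R_0^* U$ (consistently with the stated adjoint $R^* = U^* R_0$); I would fix this reading at the outset.

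First I would verify the mapping properties. Surjectivity of $R$ onto $l_2(\Z_+^n)$ follows because $U$ is a unitary bijection and $R_0^*$ is surjective, its right inverse being $R_0$ since $R_0^* R_0 = I$. For the restriction, $U$ restricts to a unitary from $\Aa^2_\mu(D)$ onto $\Aa_1^2$, while $R_0^*$ restricted to $\Aa_1^2 = \rg R_0$ is exactly the inverse of the isometry $R_0$; hence $R|_{\Aa^2_\mu(D)}$ is an isometric isomorphism onto $l_2(\Z_+^n)$, with inverse $U^* R_0$. The adjoint identity $R^* = (R_0^* U)^* = U^* R_0$ is then immediate from the unitarity $U^* = U^{-1}$, and the first product formula is the direct computation $R R^* = R_0^*(U U^*) R_0 = R_0^* R_0 = I$ on $l_2(\Z_+^n)$.

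The single step carrying genuine content, and the one I would treat as the crux, is the second product formula. Composing gives $R^* R = U^*(R_0 R_0^*) U = U^* P_1 U$, so everything reduces to identifying $U^* P_1 U$ with the Bergman projection $B_{D,\mu}$. This is where one invokes that $U$ sends $\Aa^2_\mu(D)$ isometrically onto $\Aa_1^2$: conjugating the orthogonal projection $P_1$ onto $\Aa_1^2$ by the unitary $U$ produces the orthogonal projection onto $U^{-1}(\Aa_1^2) = \Aa^2_\mu(D)$, which is by definition $B_{D,\mu}$. The rest being formal, this identification is the only place where the explicit description of $\Aa_1^2$ obtained earlier is actually used.
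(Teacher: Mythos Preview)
Your proposal is correct and matches the paper's own treatment: the paper gives no separate proof, simply prefacing the theorem with ``Summarizing the above we have,'' and your write-up is exactly that summary spelled out, including the key observation that $R^*R = U^*P_1U$ is the Bergman projection because $U$ carries $\Aa^2_\mu(D)$ unitarily onto $\Aa_1^2$. Your reading $R = R_0^* U$ (rather than the literal $R_0 U$) is indeed the intended one, as confirmed both by the stated adjoint $R^* = U^* R_0$ and by the computation $R a R^* = R_0^* U a U^{-1} R_0$ appearing later in the proof of Theorem~\ref{th:RT_aR*}.
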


\begin{theorem}
The isometric isomorphism
\begin{equation*}
  R^*=U^*R_0 : l_2(\Z_+^n) \longrightarrow {\Aa}^2_{\mu}(D)
\end{equation*}
is given by
\begin{equation} \label{eq:unitR*}
  R^* : \, \{c_p\}_{p \in \Z_+^n} \ \longmapsto \
  (2\pi)^{-\frac{n}{2}}\sum_{p \in \Z_+^n}\alpha_p \,c_p\, z^p.
\end{equation}
\end{theorem}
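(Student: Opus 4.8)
The plan is to prove the formula by simply composing the two operators $R_0$ and $U^{*} = \Fa_{(n)}^{-1} \otimes I$ whose product is, by definition, $R^{*}$. Both factors have already been written out in closed form in the preceding discussion, so no new analytic ingredient is required; the content is a bookkeeping computation whose only subtlety is applying the polar-coordinate identification $z_k = t_k r_k$ at the right moment. The range and isometry assertions are not re-proved here, since they are already supplied by Theorem~2.1.

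First I would apply $R_0$ to a sequence $\{c_p\}_{p \in \Z_+^n}$. By its definition this produces the element $\{c_p(r)\}_{p \in \Z^n}$ of $l_2(\Z^n) \otimes L_2(\tau(D),\mu)$ with $c_p(r) = \alpha_p c_p r^p$ for $p \in \Z_+^n$ and $c_p(r) = 0$ otherwise. Next I would apply $U^{*} = \Fa_{(n)}^{-1} \otimes I$. Recall that $\Fa^{-1} = \Fa^{*}$ sends a sequence $\{c_m\}_{m \in \Z}$ to $\frac{1}{\sqrt{2\pi}} \sum_{m \in \Z} c_m t^m$; tensoring over the $n$ circle factors, $\Fa_{(n)}^{-1}$ converts the $\Z^n$-indexed coefficients into the Fourier series $(2\pi)^{-\frac{n}{2}} \sum_p (\,\cdot\,)_p\, t_1^{p_1} \cdots t_n^{p_n}$ on $\T^n$, while the identity factor leaves the dependence on $r$ untouched. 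Applying this to $\{c_p(r)\}$, and using that only the indices $p \in \Z_+^n$ contribute, yields $(2\pi)^{-\frac{n}{2}} \sum_{p \in \Z_+^n} \alpha_p c_p\, r^p\, t_1^{p_1} \cdots t_n^{p_n}$.

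Finally I would invoke the identification $z = (t_1 r_1, \ldots, t_n r_n)$, so that $r^p\, t_1^{p_1} \cdots t_n^{p_n} = (t_1 r_1)^{p_1} \cdots (t_n r_n)^{p_n} = z^p$. This collapses the last expression into $(2\pi)^{-\frac{n}{2}} \sum_{p \in \Z_+^n} \alpha_p c_p z^p$, which is exactly the asserted formula~(\ref{eq:unitR*}). Since the whole argument is mechanical, the closest thing to an obstacle is purely a matter of care: one must track the normalization constant $(2\pi)^{-\frac{n}{2}}$ arising from the $n$ inverse Fourier transforms against the normalizations implicit in $\alpha_p$ and in the measure $dv(z)$, and must verify that the recombination $r^p\, t^p = z^p$ is carried out in accordance with the earlier convention $z_k = t_k r_k$.
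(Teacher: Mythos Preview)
Your proposal is correct and follows essentially the same computation as the paper: apply $R_0$ to obtain the sequence $\{\alpha_p c_p r^p\}$, then apply $U^{*}=\Fa_{(n)}^{-1}\otimes I$ and use the identification $(tr)^p=z^p$. The paper's version is simply terser, omitting the explicit discussion of normalization constants that you spell out.
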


\begin{proof}
Calculate
\begin{eqnarray*}
R^*=U^*R_0 &:& \{c_p\}_{p \in \Z_+^n} \longmapsto U^*
(\{\alpha_p \,c_p\, r^p\}_{p \in \Z_+^n} ) \\
&=&(2\pi)^{-\frac{n}{2}}\sum_{p \in \Z_+^n}\alpha_p \,c_p\, (tr)^p
= (2\pi)^{-\frac{n}{2}}\sum_{p \in \Z_+^n}\alpha_p \,c_p\, z^p.
\end{eqnarray*}
\end{proof}

\begin{corollary}
The inverse isomorphism
\begin{equation*}
  R : {\Aa}_{\mu}^2(D) \longrightarrow l_2(\Z_+^n)
\end{equation*}
is given by
\begin{equation} \label{eq:unitR}
   R : \varphi (z) \ \longmapsto \ \left\{(2\pi)^{-\frac{n}{2}} \,
  \alpha_p\, \int_{D} \varphi(z)\, \overline{z}^p\,
  \mu(|z|) \,dv(z)\right\}_{p \in \Z_+^n}.
\end{equation}
\end{corollary}

\section{Toeplitz operators with separately radial symbols} \label{se:Reinh-Toepl}
\setcounter{equation}{0}

We will call a function $a(z)$, $z \in D$, {\em separately radial} if
$a(z)=a(r)=a(r_1,...,r_n)$, i.e., $a$ depends only on the radial
components of $z=(z_1,...,z_n)=(t_1r_1,...,t_nr_n)$.

\begin{theorem} \label{th:RT_aR*}
Let $a=a(r)$ be a bounded measurable separately radial function. Then
the Toeplitz operator $T_a$ acting on $\Aa^2_{\mu}(D)$ is unitary
equivalent to the multiplication operator $\gamma_aI=R\,T_a R^*$
acting on $l_2(\Z_+^n)$, where $R$ and $R^*$ are given by
(\ref{eq:unitR}) and (\ref{eq:unitR*}) respectively. The sequence
$\gamma_a=\{\gamma_a(p)\}_{p \in \Z_+^n}$ is given by
\begin{equation} \label{eq:gamma}
 \gamma_a(p)=\alpha_p^2 \int_{\tau(D)} a(r)\,r^{2p}\,
   \mu(r_1,...,r_n)\, \prod_{k=1}^n r_kdr_k, \ \ \ \ \ \ p \in \Z_+^n.
\end{equation}
\end{theorem}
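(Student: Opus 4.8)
The plan is to compute the operator $R\,T_a R^*$ directly on the standard orthonormal basis $\{e_q\}_{q \in \Z_+^n}$ of $l_2(\Z_+^n)$, where $e_q$ carries a $1$ in the $q$-th slot and zeros elsewhere, and to verify that each $e_q$ is an eigenvector with eigenvalue $\gamma_a(q)$, so that the operator is the announced multiplication (diagonal) operator. Before doing so I would simplify the composition. Writing the Toeplitz operator as $T_a = B_{D,\mu}\,M_a$ (restricted to the Bergman space), where $M_a$ denotes multiplication by $a$, and using the relations $R R^* = I$ and $R^* R = B_{D,\mu}$ established above, I observe that $R\,B_{D,\mu} = R\,(R^* R) = (R R^*)\,R = R$. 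Hence $R\,T_a R^* = R\,M_a R^*$, so the Bergman projection drops out and the whole problem reduces to evaluating $R\,M_a R^*$.

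Next I would insert the two explicit formulas. By (\ref{eq:unitR*}), $R^* e_q = (2\pi)^{-n/2}\,\alpha_q\, z^q$, and since $a$ is separately radial, $a(z)=a(r)$, giving $M_a R^* e_q = (2\pi)^{-n/2}\,\alpha_q\,a(r)\,z^q$. Applying (\ref{eq:unitR}), the $p$-th component of $R\,M_a R^* e_q$ is
$$
  (2\pi)^{-n}\,\alpha_p\,\alpha_q \int_D a(r)\, z^q\,\overline{z}^{\,p}\,\mu(|z|)\,dv(z).
$$
Passing to polar coordinates $z_k=t_kr_k$ separates this integral: with $z^q\overline{z}^{\,p}=t^{\,q-p}\,r^{\,q+p}$ and $dv(z)=\prod_{k}\frac{dt_k}{it_k}\prod_{k} r_k dr_k$, it factors as
$$
  \left(\int_{\T^n} t^{\,q-p}\prod_{k=1}^n\frac{dt_k}{it_k}\right)
  \left(\int_{\tau(D)} a(r)\,r^{\,q+p}\,\mu(r)\prod_{k=1}^n r_k dr_k\right).
$$

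The decisive step is the torus integral. Writing $t_k=e^{i\theta_k}$ turns each factor $\int_{S^1} t_k^{\,q_k-p_k}\frac{dt_k}{it_k}$ into $\int_0^{2\pi} e^{i(q_k-p_k)\theta_k}\,d\theta_k = 2\pi\,\delta_{q_k,p_k}$, so the full angular integral equals $(2\pi)^n\,\delta_{q,p}$. This is exactly where separate radiality is used: because $a$ carries no dependence on $t$, the angular integration survives untouched and produces the Kronecker delta, forcing $p=q$. Substituting back, all off-diagonal components vanish, while the diagonal component at $p=q$ equals $\alpha_q^2\int_{\tau(D)} a(r)\,r^{2q}\,\mu(r)\prod_{k=1}^n r_k dr_k = \gamma_a(q)$, which is precisely (\ref{eq:gamma}). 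Therefore $R\,T_a R^*$ acts as multiplication by the sequence $\gamma_a$, establishing the claimed unitary equivalence. I anticipate no serious obstacle: the argument is a direct computation, and the only points needing care are the bookkeeping of the constants $(2\pi)^{\pm n/2}$ and the remark that boundedness of $a$ makes every integral converge, so that $\gamma_a \in l_\infty(\Z_+^n)$ and the resulting multiplication operator is bounded.
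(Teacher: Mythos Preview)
Your argument is correct and follows essentially the same route as the paper: both reduce $R\,T_a R^*$ to $R\,a\,R^*$ via $RR^*=I$, $R^*R=B_{D,\mu}$, and then compute this operator explicitly, with the separate radiality of $a$ producing diagonality through torus orthogonality. The only cosmetic difference is that the paper packages the angular step by writing $R=R_0U$ and noting that $U\,a(r)\,U^{-1}=a(r)$, whereas you carry out the torus integral directly from the formulas (\ref{eq:unitR}) and (\ref{eq:unitR*}).
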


\begin{proof}
The operator $T_a$ is obviously unitary equivalent to the operator
\begin{eqnarray*}
  R\,T_a\,R^* &=& R\,B_{D,\mu} a B_{D,\mu}R^*=R(R^*R)a(R^*R)R^* \\
  &=&(RR^*)RaR^*(RR^*)=RaR^* \\
  &=& R_0^*Ua(r)U^{-1}R_0 \\
  &=&R_0^*(\Fa_{(n)}\otimes I)a(r)(\Fa_{(n)}^{-1}\otimes I)R_0 \\
  &=&R_0^*a(r)R_0.
\end{eqnarray*}
Now
\begin{eqnarray*}
  R_0^*a(r)R_0\{c_p\}_{p \in \Z_+^n} &=&
  R_0^* \left\{ a(r)\, \alpha_p\, c_p\, r^p \right\}_{p \in \Z_+^n}
  \\
  &=& \left\{ \alpha_p \int_{\tau(D)} r^p\, a(r)\, \alpha_p\, c_p\, r^p\,
  \mu(r_1,...,r_n)\, \prod_{k=1}^n r_kdr_k \right\}_{p \in \Z_+^n} \\
  &=& \{\gamma_a(p)\cdot c_p\}_{p \in \Z_+^n},
\end{eqnarray*}
where
\begin{equation*}
  \gamma_a(p)=\alpha_p^2 \int_{\tau(D)} a(r)\,r^{2p}\,
   \mu(r_1,...,r_n)\, \prod_{k=1}^n r_kdr_k, \ \ \ \ \ \ p \in \Z_+^n.
\end{equation*}
\end{proof}

It is easy to see that the system of functions $\{e_p\}_{p \in
\Z_+^n}$, where $e_p(z)=(2\pi)^{-\frac{n}{2}}\,\alpha_p\, z^{p}$,
forms an orthonormal base in $\Aa^2_{\mu}(D)$.

\begin{corollary}
The Toeplitz operator $T_a$ with bounded measurable separately radial
symbol $a(r)$ is diagonal with respect to the above orthonormal
base:
\begin{equation} \label{eq:diag}
    T_a \, e_p = \gamma_a(p) \cdot  \, e_p,
    \ \ \ \ \ \ p \in \Z_+^n.
\end{equation}
\end{corollary}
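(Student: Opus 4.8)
The plan is to read the corollary off as an immediate transcription of Theorem~\ref{th:RT_aR*} through the isometry $R^*$, exploiting the fact that the orthonormal base $\{e_p\}_{p\in\Z_+^n}$ is nothing but the image under $R^*$ of the standard basis of $l_2(\Z_+^n)$. First I would fix notation: let $\{\delta_p\}_{p\in\Z_+^n}$ be the standard basis of $l_2(\Z_+^n)$, i.e.\ $\delta_p$ carries a single $1$ in position $p$. Comparing with the explicit formula (\ref{eq:unitR*}) for $R^*$, one sees at once that $R^*\delta_p = (2\pi)^{-\frac{n}{2}}\alpha_p\,z^p = e_p$. Since $R^*$ is an isometric isomorphism onto $\Aa^2_\mu(D)$ by Theorem~2.1, this simultaneously confirms that $\{e_p\}$ is an orthonormal base and sets up the dictionary between the two spaces; dually, $R e_p = R R^*\delta_p = \delta_p$, using $R R^* = I$.

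Next I would invoke Theorem~\ref{th:RT_aR*}, which gives $R\,T_a\,R^* = \gamma_a I$, the operator on $l_2(\Z_+^n)$ that multiplies the $p$-th coordinate by $\gamma_a(p)$. Combining this with the relations $R R^* = I$ and $R^* R = B_{D,\mu}$ (Theorem~2.1), the latter restricting to the identity on $\Aa^2_\mu(D)$, I would write $T_a = R^*(R\,T_a\,R^*)R = R^*(\gamma_a I)R$ on the Bergman space and then simply compute
$$
T_a\, e_p = R^*(\gamma_a I)\,R\, e_p = R^*(\gamma_a I)\,\delta_p = \gamma_a(p)\,R^*\delta_p = \gamma_a(p)\,e_p,
$$
which is exactly the asserted identity (\ref{eq:diag}), with $\gamma_a(p)$ given by (\ref{eq:gamma}).

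There is essentially no obstacle here: the corollary is a bookkeeping consequence of the unitary equivalence already established in Theorem~\ref{th:RT_aR*}. The only point requiring a moment's care is the identification $R^*\delta_p = e_p$ together with the observation that $R^*R$ acts as the identity on $\Aa^2_\mu(D)$, so that conjugating $\gamma_a I$ back by $R$ genuinely returns $T_a$ itself rather than $B_{D,\mu}\,T_a\,B_{D,\mu}$; both facts are immediate from Theorem~2.1.
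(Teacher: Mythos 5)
Your proposal is correct and follows exactly the route the paper intends: the corollary is stated there without proof as an immediate consequence of Theorem~\ref{th:RT_aR*}, and your computation $T_a e_p = R^*(\gamma_a I)R\,e_p = \gamma_a(p)e_p$, based on $e_p = R^*\delta_p$, $RR^* = I$, and $R^*R = B_{D,\mu}$ acting as the identity on $\Aa^2_\mu(D)$, is precisely the bookkeeping the paper leaves implicit.
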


We can easily extend the notion of the Toeplitz operator for
measurable {\em unbounded} separately radial symbols. Indeed, given a
symbol $a=a(r) \in L_1(\tau(D), \mu)$, we still have equality
(\ref{eq:diag}). Then the densely defined (on the finite linear
combinations of the above base elements) Toeplitz operator can be
extended to a bounded operator on a whole $\Aa^2_{\mu}(D)$ if and
only if the sequence $\gamma_a=\{\gamma_a(p)\}_{p \in \Z_+^n}$ is
bounded. That is we have

\begin{corollary} \label{co:bnd&com}
The Toeplitz operator $T_a$ with separately radial symbol $a=a(r) \in
L_1(\tau(D), \mu)$ is bounded on $\Aa^2_{\mu}(\D)$ if and only if
\begin{equation*}
  \gamma_a=\{\gamma_a(p)\}_{p \in \Z_+^n} \in l_{\infty},
\end{equation*}
and
\begin{equation*}
  \|T_a\|=\sup_{p \in \Z_+^n} |\gamma_a(p)|.
\end{equation*}
The Toeplitz operator $T_a$ is compact if and only if $\gamma
_{a}\in c_{0}$ that is
\begin{equation*}
  \lim_{p\rightarrow \infty} \gamma_a(p) = 0.
\end{equation*}
The spectrum of the bounded Toeplitz operator $T_a$ is given by
\begin{equation*}
  \spec T_a = \overline{\{\gamma_a(p): \, p \in \Z_+^n\}},
\end{equation*}
and its essential spectrum $\esssp T_a$ coincides with the set of
all limit points of the sequence~$\{\gamma_a(p)\}_{p \in \Z_+^n}$.
\end{corollary}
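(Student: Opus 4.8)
The plan is to reduce the entire statement to standard spectral theory for diagonal operators on a sequence space. By the Corollary preceding this one, equation (\ref{eq:diag}) exhibits $T_a$ as acting diagonally on the orthonormal basis $\{e_p\}_{p \in \Z_+^n}$, and by Theorem \ref{th:RT_aR*} the isometric isomorphism $R$ carries $T_a$ to the diagonal multiplication operator $\gamma_a I : \{c_p\} \longmapsto \{\gamma_a(p)\, c_p\}$ on $l_2(\Z_+^n)$, with $\gamma_a(p)$ given by (\ref{eq:gamma}). (For $a \in L_1(\tau(D),\mu)$ this operator is a priori only densely defined on finitely supported sequences, but the diagonalization formula still holds on that dense domain.) Since unitary equivalence preserves boundedness, operator norm, compactness, spectrum, and essential spectrum, it suffices to establish each assertion for the diagonal operator $\gamma_a I$.

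First I would verify the boundedness criterion and norm formula. On the dense subspace of finitely supported sequences one has $\|\gamma_a I \{c_p\}\|^2 = \sum_p |\gamma_a(p)|^2\,|c_p|^2 \le (\sup_p |\gamma_a(p)|)^2\,\|\{c_p\}\|^2$, so $\gamma_a \in l_\infty$ forces a bounded extension with $\|T_a\| \le \sup_p |\gamma_a(p)|$; evaluating on the basis vectors $e_p$ gives the reverse inequality, hence equality, and shows that an unbounded diagonal yields an unbounded operator. Next, for compactness, truncating to the finitely many indices with $|\gamma_a(p)| \ge \varepsilon$ produces finite-rank approximants converging in norm to $\gamma_a I$ exactly when $\gamma_a(p) \to 0$; conversely, if infinitely many $|\gamma_a(p)|$ stay bounded away from $0$, the orthonormal sequence $\{e_p\}$ has image with no convergent subsequence, so $T_a$ is not compact. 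This gives the $c_0$ characterization.

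For the spectrum I would note that each $\gamma_a(p)$ is an eigenvalue, while for $\lambda \notin \overline{\{\gamma_a(p)\}}$ the diagonal operator with entries $(\gamma_a(p)-\lambda)^{-1}$ is bounded and inverts $\gamma_a I - \lambda I$; since the spectrum is closed, this yields $\spec T_a = \overline{\{\gamma_a(p): p \in \Z_+^n\}}$. Finally, for the essential spectrum I would show $\lambda \in \esssp T_a$ precisely when $\lambda$ is a limit point of the sequence. If $\lambda$ is not a limit point, only finitely many $\gamma_a(p)$ lie in some disk about it, so $\gamma_a I - \lambda I$ is invertible modulo a finite-rank operator, hence Fredholm; if $\lambda$ is a limit point, a subsequence of basis vectors $\{e_{p_j}\}$ satisfies $(\gamma_a I - \lambda I)e_{p_j} \to 0$, giving a singular Weyl sequence and obstructing the Fredholm property.

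The computations here are all routine consequences of the diagonal structure, so there is no genuine obstacle; the one point deserving careful argument is the essential-spectrum characterization, where the Fredholm/Weyl-sequence reasoning must be spelled out to distinguish mere $c_0$-membership from the limit-point structure of $\{\gamma_a(p)\}_{p \in \Z_+^n}$.
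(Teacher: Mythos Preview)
Your proposal is correct and matches the paper's approach: the paper states this result as an immediate corollary of the diagonal representation (\ref{eq:diag}) and the unitary equivalence $R T_a R^* = \gamma_a I$ from Theorem~\ref{th:RT_aR*}, without giving any further argument. Your reduction to the spectral theory of diagonal multiplication operators on $l_2(\Z_+^n)$ is exactly the intended reasoning, just spelled out in more detail than the paper provides.
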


\begin{corollary}
The $C^*$-algebra generated by Toeplitz operators with
separately radial $L_{\infty}$-symbols is commutative.
\end{corollary}

\section{Foliations, transverse Riemannian structures, and \\ bundle-like metrics}

In this section we will briefly summarize some notions of foliations
and their geometry. We refer to \cite{Molino88} for further details.

A foliation on a manifold $M$ is a partition of $M$ into connected
submanifolds of the same dimension that locally looks like a
partition given by the fibers of a submersion. The local picture is
given by considering foliated charts and the partition as a global
object is obtained by imposing a compatibility condition between the
foliated charts. We make more precise this notion through the
following definitions.

\begin{definition}
On a smooth manifold $M$ a codimension $q$ foliated chart is a pair
$(\varphi, U)$ given by an open subset $U$ of $M$ and a smooth
submersion $\varphi : U \rightarrow V$, where $V$ is an open subset
of $\mathbb{R}^q$. For a foliated chart $(\varphi, U)$ the connected
components of the fibers of $\varphi$ are called the plaques of the
foliated chart. Two codimension $q$ foliated charts
$(\varphi_1,U_1)$ and $(\varphi_2,U_2)$ are called compatible if
there exists a diffeomorphism $\psi_{12} : \varphi_1(U_1\cap U_2)
\rightarrow \varphi_2(U_1\cap U_2)$ such that the following diagram
commutes:
\begin{equation}\label{charts-comp}
\xymatrix{
        &   U_1\cap U_2 \ar[dl]_{\varphi_1} \ar[dr]^{\varphi_2}  & \\
   \varphi_1(U_1\cap U_2) \ar[rr]^{\psi_{12}}    &     &   \varphi_2(U_1\cap U_2)
}
\end{equation}
A foliated atlas on a manifold $M$ is a collection
$\{(\varphi_\alpha,U_\alpha)\}_\alpha$ of foliated charts which are
mutually compatible and that satisfy $M = \bigcup_\alpha U_\alpha$.
\end{definition}

It is straightforward to check that the compatibility of two
foliated charts $(\varphi_1,U_1)$ and $(\varphi_2,U_2)$ ensures
that, when restricted to $U_1\cap U_2$, both submersions $\varphi_1$
and $\varphi_2$ have the same plaques. This in turn implies that,
for any given foliated atlas, the following is an equivalence
relation in $M$.
\begin{eqnarray*}
        x\sim y &\iff& \mbox{there is a sequence of plaques }
                                (P_k)_{k=0}^l \mbox{ for foliated charts }
                                (\varphi_k, U_k)_{k=0}^l       \\
                        &&      \mbox{of the foliated atlas, such that }
                                x\in P_0,\ y\in P_l,   \\
                        &&      \mbox{and }
                                P_{k-1}\cap P_k\neq\phi \mbox{ for every }
                                k=1,\dots,l
\end{eqnarray*}
We will refer to the latter as the equivalence relation of the
foliated atlas. It is a simple matter to show that the equivalence
classes are in fact submanifolds of $M$ of dimension $\dim(M)-q$,
where $q$ is the (common) codimension of the foliated charts.

\begin{definition}\label{def-foliation}
A foliation $\Fol$ on a manifold $M$ is a partition of $M$ which can
be described as the classes of the equivalence relation of a
foliated atlas. The classes are called the leaves of the foliation.
\end{definition}

Suppose that $M$ is a manifold carrying a smooth foliation $\Fol$.
We will denote with $T\Fol$ the vector subbundle of $TM$ that
consists of elements tangent to the leaves of $\Fol$. We can
consider the quotient bundle $TM/T\Fol$ which we will denote by
$T^t\Fol$. The latter will be referred to as the transverse vector
bundle of the foliation $\Fol$. Since $T^t\Fol$ is a smooth vector
bundle, we can consider the associated linear frame bundle which we
will denote with $L_T(\Fol)$. More precisely, we have as a set:
$$
        L_T(\Fol) = \{ A : \ A : \mathbb{R}^q \rightarrow
        T^t_x\Fol = T_xM/T_x\Fol \mbox{ is an isomorphism and }
        x\in M \},
$$
where $q$ is the codimension of $\Fol$ in $M$. It is easily seen
that $L_T(\Fol)$ is a principal fiber bundle with structure group
$\mathrm{GL}_q(\R)$, we refer to \cite{Molino88} for the
%correction
details of the proof. The principal bundle $L_T(\Fol)$ is called the
transverse frame bundle since it allows us to study the geometry
transverse to the foliation $\Fol$.

When studying the transverse geometry of a foliation $\Fol$ it is
useful to consider a certain natural foliation in $L_T(\Fol)$, which
is defined as follows.
%correction

%For the transverse frame bundle $L_T(\Fol)$ to be useful in the
%study of the transverse geometry of a foliation $\Fol$, it is
%necessary to consider a natural foliation in $L_T(\Fol)$ which we
%now proceed to describe.

Suppose that for a foliation $\Fol$ on a manifold $M$ we choose a
foliated atlas $\{(\varphi_\alpha,U_\alpha)\}_\alpha$ that
determines the foliation as in Definition \ref{def-foliation}. For
any foliated chart $(\varphi_\alpha,U_\alpha)$ and every $x\in
U_\alpha$ we have a linear map $d(\varphi_\alpha)_x : T_xM
\rightarrow \mathbb{R}^q$ whose kernel is $T_x\Fol$. This induces a
linear isomorphism $d(\varphi_\alpha)^t_x : T^t_x\Fol = T_xM/T_x\Fol
\rightarrow \mathbb{R}^q$. The latter allows us to define the smooth
map:
\begin{eqnarray*}
    \varphi^{(1)}_\alpha : L_T(\Fol|_{U_\alpha}) &\rightarrow&
L(V_\alpha) \\
    A &\mapsto& d(\varphi_\alpha)^t_x \circ A,
\end{eqnarray*}
where $L_T(\Fol|_{U_\alpha})$ is the open subset of $L_T(\Fol)$
given by inverse image of $U_\alpha$ under the natural projection
$L_T(\Fol) \rightarrow M$, $A$ is mapped to $x$ under such
projection and $V_\alpha$ is the target of $\varphi_\alpha$. Next we
observe that, since $V_\alpha$ is open in $\mathbb{R}^q$, the
manifold $L(V_\alpha)$ is open in $\mathbb{R}^q\times
\mathrm{GL}_q(\R)$ and so it is open in $\mathbb{R}^{q+q^2}$ as
well. Furthermore, from our choices it is easy to check that the
commutative diagram (\ref{charts-comp}) and the compatibility of
charts in a foliated atlas induce a corresponding commutative
diagram given by:
$$
\xymatrix{
        &   L_T(\Fol|_{U_1\cap U_2}) \ar[dl]_{\varphi_{\alpha_1}^{(1)}}
    \ar[dr]^{\varphi_{\alpha_2}^{(1)}}  & \\
   L(\varphi_{\alpha_1}(U_1\cap U_2)) \ar[rr]^{\psi_{\alpha_1\alpha_2}^{(1)}}    &     &
L(\varphi_{\alpha_2}(U_1\cap U_2)) }
$$
where $\psi_{\alpha_1\alpha_2}^{(1)}$ is defined as above for the
diffeomorphism $\psi_{\alpha_1\alpha_2}$ for which we have
$\varphi_{\alpha_2} = \psi_{\alpha_1\alpha_2} \circ
\varphi_{\alpha_1}$, as in diagram (\ref{charts-comp}). This shows
that the set
$\{(\varphi_\alpha^{(1)},L_T(\Fol|_{U_\alpha}))\}_\alpha$ defines a
foliated atlas. The corresponding foliation in $L_T(\Fol)$ is called
the lifted foliation. We state without proof the following result
which can be found in \cite{Molino88}.

\begin{theorem}\label{lifted-fol-thm}
Let $\Fol$ be a foliation on a smooth manifold $M$. Then, the
natural projection $L_T(\Fol) \rightarrow M$ maps the leaves of the
lifted foliation of $L_T(\Fol)$ locally diffeomorphically onto the
leaves of $\Fol$.
\end{theorem}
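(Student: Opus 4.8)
The plan is to reduce everything to a single foliated chart, where the lifted foliation is given explicitly by the submersion $\varphi_\alpha^{(1)}$, and then to analyze how the projection $\pi : L_T(\Fol) \to M$ behaves on the fibers of $\varphi_\alpha^{(1)}$. Fix a foliated chart $(\varphi_\alpha, U_\alpha)$ with $\varphi_\alpha : U_\alpha \to V_\alpha \subset \R^q$. Since $V_\alpha$ is open in $\R^q$, its frame bundle $L(V_\alpha)$ is canonically $V_\alpha \times \mathrm{GL}_q(\R)$, and under this identification the lifted submersion reads $\varphi_\alpha^{(1)}(A) = (\varphi_\alpha(x), d(\varphi_\alpha)_x^t \circ A)$ for a frame $A$ based at $x$. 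The first thing I would record is the intertwining relation $\varphi_\alpha \circ \pi = \mathrm{pr}_1 \circ \varphi_\alpha^{(1)}$, where $\mathrm{pr}_1 : L(V_\alpha) \to V_\alpha$ is the bundle projection; this is exactly what forces $\pi$ to carry plaques of the lifted chart into plaques of $\varphi_\alpha$.

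The heart of the argument is the computation of the fibers of $\varphi_\alpha^{(1)}$. Given $(v, B) \in V_\alpha \times \mathrm{GL}_q(\R)$, a frame $A$ based at $x$ lies in $(\varphi_\alpha^{(1)})^{-1}(v,B)$ if and only if $\varphi_\alpha(x) = v$ and $A = (d(\varphi_\alpha)_x^t)^{-1} \circ B$, the latter being well defined because $d(\varphi_\alpha)_x^t : T_x^t\Fol \to \R^q$ is an isomorphism. Thus the fiber is precisely the image of the map $s_B : \varphi_\alpha^{-1}(v) \to L_T(\Fol|_{U_\alpha})$, $x \mapsto (d(\varphi_\alpha)_x^t)^{-1} \circ B$, which is a smooth section of $\pi$ over the $\varphi_\alpha$-fiber $\varphi_\alpha^{-1}(v)$ (smoothness follows from smoothness of $d\varphi_\alpha$ together with that of matrix inversion). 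Since $\pi \circ s_B = \mathrm{id}$, the restriction of $\pi$ to $(\varphi_\alpha^{(1)})^{-1}(v,B)$ is a diffeomorphism onto $\varphi_\alpha^{-1}(v)$ with inverse $s_B$. A diffeomorphism respects connected components, so $\pi$ carries each plaque of the lifted chart (a component of a $\varphi_\alpha^{(1)}$-fiber) diffeomorphically onto a plaque of $(\varphi_\alpha, U_\alpha)$ (the corresponding component of a $\varphi_\alpha$-fiber).

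It remains to pass from plaques to leaves. Fix $A \in L_T(\Fol)$, put $x = \pi(A)$, and let $\tilde L$, $L$ be the leaves of the lifted foliation and of $\Fol$ through $A$ and $x$. Because a leaf is, by Definition \ref{def-foliation}, the union of a connected chain of plaques meeting along overlaps, and $\pi$ sends lifted plaques onto plaques of $\Fol$ respecting intersections, we obtain $\pi(\tilde L) \subset L$. For the local diffeomorphism claim I would note that the plaque of $\tilde L$ through $A$ is open in $\tilde L$ and maps diffeomorphically onto the plaque of $L$ through $x$, which is open in $L$; since $\dim \tilde L = (\dim M + q^2) - (q + q^2) = \dim M - q = \dim L$, this exhibits $\pi|_{\tilde L}$ as a local diffeomorphism onto $L$. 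I do not expect a genuine obstacle: the content is entirely formal once $L(V_\alpha)$ is trivialized and $d(\varphi_\alpha)_x^t$ is inverted. The only points demanding care are the smoothness of the section $s_B$ and the bookkeeping that upgrades the plaque-level diffeomorphism to a leaf-level local diffeomorphism, namely keeping track of connected components so the conclusion is genuinely about leaves and not merely about plaques.
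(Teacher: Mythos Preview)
The paper does not actually prove this theorem: it is stated without proof and attributed to Molino's book \cite{Molino88}. So there is no in-paper argument to compare against.

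That said, your argument is sound and is essentially the standard one. The identification $L(V_\alpha)\cong V_\alpha\times\mathrm{GL}_q(\R)$ and the formula $\varphi_\alpha^{(1)}(A)=(\varphi_\alpha(x),\,d(\varphi_\alpha)^t_x\circ A)$ are exactly what the paper sets up before the theorem; your observation that each $\varphi_\alpha^{(1)}$-fiber is the image of a smooth section $s_B$ of $\pi$ over the corresponding $\varphi_\alpha$-fiber is the correct mechanism, and it immediately gives the plaque-to-plaque diffeomorphism. The passage from plaques to leaves via chains and the dimension count are fine. One cosmetic remark: you might state explicitly that $s_B$ is smooth because $x\mapsto d(\varphi_\alpha)^t_x$ is a smooth map into $\mathrm{GL}_q(\R)$ (it is the transverse part of a smooth bundle map of constant rank), so inversion is smooth; you allude to this but it is worth one sentence.
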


From its construction, the principal fiber bundle $L_T(\Fol)
\rightarrow M$ models some aspects of the transverse geometry of the
foliation $\Fol$. At the same time, Theorem \ref{lifted-fol-thm}
%correction
shows that the lifted foliation in $L_T(\Fol)$ is needed to fully
capture the foliated nature of the transverse geometry of $\Fol$. In
order to define transverse geometric structures for a given
%correction
foliation $\Fol$ we consider now reductions of $L_T(\Fol)$
compatible with the lifted foliation. More precisely, we have the
following definition which also introduces the notion of a
Riemannian foliation.

\begin{definition}\label{def-Riem-fol}
Let $M$ be a manifold carrying a smooth foliation $\Fol$ of
codimension $q$, and let $H$ be a Lie subgroup of
$\mathrm{GL}_q(\R)$. A transverse geometric $H$-structure is a
reduction $Q$ of $L_T(\Fol)$ to the subgroup $H$ which is saturated
with respect to the lifted foliation, i.e. such that $Q\cap L \neq
\phi$ implies $L \subset Q$ for every leaf $L$ of the lifted
foliation. A transverse geometric $O(q)$-structure is also called a
transverse Riemannian structure. A foliation endowed with a
transverse Riemannian structure is called a Riemannian foliation.
\end{definition}

From the definition, it is easy to see that a transverse Riemannian
structure defines a Riemannian metric on the bundle $TM/T\Fol =
T^t\Fol$. However, a transverse Riemannian structure is more than a
simple Riemannian metric on $T^t\Fol$. By requiring the
$O(q)$-reduction that defines a transverse Riemannian structure to
be saturated with respect to the lifted foliation, as in Definition
\ref{def-Riem-fol}, we ensure the invariance of the metric as we
move along the leaves in $M$. This is a well known property of
Riemannian foliations whose further discussion can be found in
\cite{Molino88} and other books on the subject. Here we observe
that, since a Riemannian metric on a manifold defines a distance,
the invariance of a transverse Riemannian structure as we move along
the leaves can be interpreted as the leaves of the foliation in $M$
to be equidistant while we move along them. Again, this sort of
%correction
remark is well known in the theory of foliations and shows that a
Riemannian foliation has a distinguished geometry. In particular,
not every foliation admits a Riemannian structure, a standard
example is given by the Reeb foliation of the sphere $S^3$ (see
\cite{Molino88}).

A fundamental way to construct transverse Riemannian structures for
a foliation is to consider suitable Riemannian metrics on the
manifold that carries the foliation. To describe such construction
we will need some additional notions.

\begin{definition}
Let $\Fol$ be a smooth foliation on a manifold $M$. A vector field
$X$ on $M$ is called foliate if for every vector field $Y$ tangent
to the leaves of $\Fol$ the vector field $[X,Y]$ is tangent to the
leaves as well.
\end{definition}

From the previous definition, we observe that the set of foliate
vector fields is the normalizer of the fields tangent to the leaves
of $\Fol$ in the Lie algebra of all vector fields on $M$.

\begin{definition}
Let $\Fol$ be a smooth foliation on a manifold $M$. A Riemannian
metric $h$ in $M$ is called bundle-like for the foliation $\Fol$ if
the real-valued function $h(X,Y)$ is constant along the leaves of
$\Fol$ for every pair of vector fields $X$, $Y$ which are foliate
and perpendicular to $T\Fol$ with respect to $h$.
\end{definition}

Suppose that $h$ is a Riemannian metric on a manifold $M$ and that
$\Fol$ is a foliation on $M$. Then, the canonical projection $TM
\rightarrow T^t\Fol$ allows us to induce a Riemannian metric on the
bundle $T^t\Fol$, which in turn provides an $O(q)$-reduction of the
transverse frame bundle $L_T(\Fol)$ (where $q$ is the codimension of
$\Fol$). Nevertheless, such reduction does not necessarily defines a
transverse Riemannian structure. The next result states that
bundle-like metrics are precisely those that define transverse
Riemannian structures. The proof of this theorem can be found in
\cite{Molino88}.

\begin{theorem}\label{bundle-like-vs-Riem-fol}
Let $M$ be a manifold carrying a smooth foliation $\Fol$ of
codimension $q$. For every Riemannian metric $h$ on $M$, denote by
$O_T(M,h)$ the $O(q)$-reduction of $L_T(\Fol)$ given by the
Riemannian metric on $T^t(\Fol)$ coming from $h$ and the natural
projection $TM \rightarrow T^t\Fol$. If $h$ is a bundle-like metric,
then $O_T(M,h)$ defines a transverse Riemannian structure on $\Fol$.
Conversely, for every transverse Riemannian structure given by a
reduction $Q$ as in Definition \ref{def-Riem-fol}, there is a
bundle-like metric $h$ on $M$ such that $Q = O_T(M,h)$.
\end{theorem}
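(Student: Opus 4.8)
The plan is to reduce the whole theorem to a single infinitesimal criterion: a Riemannian metric $h$ on $M$ is bundle-like if and only if the transverse metric it induces on $T^t\Fol$ is invariant under the holonomy of $\Fol$, equivalently is parallel along the leaves for the canonical flat partial (Bott) connection $\nabla^{B}$ on $T^t\Fol$ defined by $\nabla^{B}_{X}\overline{U}=\overline{[X,U]}$, for $X$ tangent to $\Fol$ and $U$ any lift of a section $\overline{U}$ of $T^t\Fol$. This is well defined precisely because $T\Fol$ is involutive, so that changing the lift $U$ by a leafwise field does not change $\overline{[X,U]}$.

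First I would translate the saturation condition of Definition \ref{def-Riem-fol} into this analytic language. Unwinding the construction of the lifted foliation, the plaques of $\varphi^{(1)}_\alpha$ consist of those frames $A$ that keep $d(\varphi_\alpha)^t_x\circ A$ fixed as $x$ moves within a plaque of $\varphi_\alpha$; that is, a leaf of the lifted foliation is exactly a $\nabla^{B}$-parallel family of transverse frames over a leaf of $\Fol$. Consequently an $O(q)$-reduction $Q$, viewed as the orthonormal frames of a transverse metric $g^{t}$, is saturated exactly when $\nabla^{B}$-parallel transport along leaves preserves $g^{t}$-orthonormality, i.e. when $\La_{X}g^{t}=0$ for every $X$ tangent to $\Fol$, where
\[
(\La_{X}g^{t})(\overline{U},\overline{V})=X\!\left(g^{t}(\overline{U},\overline{V})\right)-g^{t}(\nabla^{B}_{X}\overline{U},\overline{V})-g^{t}(\overline{U},\nabla^{B}_{X}\overline{V}).
\]

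The heart of the argument is the computation linking $h$ to its induced transverse metric $h^{t}$. Using the $h$-orthogonal splitting $TM=T\Fol\oplus T\Fol^{\perp}$ and the isomorphism $T\Fol^{\perp}\cong T^t\Fol$, for perpendicular fields $U,V$ one has $h^{t}(\overline{U},\overline{V})=h(U,V)$ and $h^{t}(\nabla^{B}_{X}\overline{U},\overline{V})=h([X,U],V)$, whence
\[
(\La_{X}h^{t})(\overline{U},\overline{V})=X(h(U,V))-h([X,U],V)-h(U,[X,V])=(\La_{X}h)(U,V).
\]
When moreover $U,V$ are foliate the two bracket terms vanish (foliate plus perpendicular), so the right-hand side reduces to $X\big(h(U,V)\big)$, which is exactly the quantity controlled by the bundle-like condition. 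To pass from foliate perpendicular fields to all of $T\Fol^{\perp}$ I would observe that the perpendicular part $Y^{\perp}$ of any foliate field $Y$ is again foliate, since $[X,Y^{\perp}]=[X,Y]-[X,Y^{\top}]$ with both terms in $T\Fol$ by foliateness and involutivity; applying this to the coordinate fields $\partial_{y^{j}}$ of a foliated chart shows that foliate perpendicular fields span $T\Fol^{\perp}$ pointwise. Since $\La_{X}h^{t}$ is tensorial, this gives the desired equivalence: $h$ is bundle-like $\iff\La_{X}h^{t}=0$ for all $X$ tangent to $\Fol$ $\iff O_T(M,h)$ is saturated.

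Both assertions then follow at once. For the direct statement, if $h$ is bundle-like the equivalence shows $h^{t}$ is holonomy invariant, so $O_T(M,h)$ is a transverse Riemannian structure. For the converse, given a saturated reduction $Q$ with transverse metric $g^{t}$, I would choose any complement $H$ of $T\Fol$ in $TM$ and any leafwise metric, and set $h$ to be their orthogonal sum with $H$ carrying the pullback of $g^{t}$ under $H\cong T^t\Fol$; then $T\Fol^{\perp}=H$ and $h^{t}=g^{t}$, so $O_T(M,h)=Q$, while $g^{t}$ being holonomy invariant forces $h$ to be bundle-like by the equivalence. The main obstacle is the first step: making precise the dictionary between the saturation condition, phrased on the lifted foliation in the transverse frame bundle, and the parallelism $\La_{X}g^{t}=0$. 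Once that dictionary is established, the remaining computation and the construction of $h$ are routine.
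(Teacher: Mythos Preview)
Your argument is correct and follows the standard route found in Molino's book: identify the lifted foliation with Bott-parallel families of transverse frames, translate saturation of the $O(q)$-reduction into $\nabla^{B}$-parallelism of the induced transverse metric, and then check via the Lie-derivative computation that this parallelism is exactly the bundle-like condition. The spanning argument using perpendicular parts of the transverse coordinate fields $\partial_{y^j}$ is the right way to pass from foliate perpendicular fields to all of $T\Fol^{\perp}$, and the construction in the converse direction is the usual one.

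However, there is nothing to compare against here: the paper does not supply its own proof of this theorem. Immediately before the statement it says that ``The proof of this theorem can be found in \cite{Molino88},'' and the theorem is quoted as background from the literature on Riemannian foliations. So your proposal is not an alternative to the paper's argument but rather a fleshed-out version of the cited reference.
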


Based on this result, we give the following definition.
%correction

\begin{definition}\label{def-bundle-like-compatible}
Let $\Fol$ be a Riemannian foliation on a manifold $M$. We will say
that a bundle-like metric $h$ on $M$ is compatible with the
Riemannian foliation if $O_T(M,h)$ is the reduction which defines
%correction
the corresponding transverse Riemannian structure.
\end{definition}

A fundamental property of Riemannian foliations is that, with
respect to compatible bundle-like metrics, geodesics which start
perpendicular to a leaf of the foliation stay perpendicular to all
leaves. %correction

\begin{theorem}\label{transverse-tot-geod}
Let $\Fol$ be a Riemannian foliation on a manifold $M$ and let $h$
be a compatible bundle-like metric. If $\gamma$ is a geodesic of $h$
such that $\gamma'(t_0) \in (T_{\gamma(t_0)}\Fol)^\perp$, for some
$t_0$, then $\gamma'(t) \in (T_{\gamma(t)}\Fol)^\perp$ for every
$t$.
\end{theorem}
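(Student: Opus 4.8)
The plan is to reduce the statement to a uniqueness argument for a homogeneous first-order ODE satisfied by the component of $\gamma'$ tangent to the leaves. Write $\Va = T\Fol$ and $\Ha = (T\Fol)^\perp$, and for a vector field $Z$ let $Z^{\Va}$ and $Z^{\Ha}$ denote its orthogonal components with respect to $h$; let $\nabla$ be the Levi-Civita connection of $h$.

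First I would extract the analytic content of the bundle-like hypothesis as an identity for the second fundamental form of $\Ha$. For foliate vector fields $X,Y$ perpendicular to $T\Fol$ and any field $V$ tangent to the leaves, I compute $V\,h(X,Y)$. Using metric compatibility, the definition of a foliate field (so that $[V,X]$ and $[V,Y]$ are tangent to the leaves and hence $h$-orthogonal to $X,Y$), and the relations $h(V,X)=h(V,Y)=0$, the computation collapses to
$$ V\,h(X,Y) = -\,h\big((\nabla_X Y + \nabla_Y X)^{\Va}, V\big). $$
Since $h$ is bundle-like the left-hand side vanishes, giving $(\nabla_X Y + \nabla_Y X)^{\Va}=0$ for all foliate horizontal $X,Y$. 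The expression $(\nabla_X Y + \nabla_Y X)^{\Va}$ is $C^\infty(M)$-bilinear in its horizontal arguments (the correction terms $X(f)\,Y^{\Va}$ vanish because $Y$ is horizontal), so it is tensorial; as foliate horizontal fields span $\Ha$ pointwise (for instance the horizontal lifts of the transverse coordinate fields in a foliated chart), the identity $(\nabla_X Y+\nabla_Y X)^{\Va}=0$ holds for all horizontal vectors.

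Next I set up the geodesic equation along $\gamma$. Choose local frames $\{V_i\}$ of $\Va$ and $\{H_a\}$ of $\Ha$ near $\gamma(t_0)$ and expand $\gamma'(t)=\sum_a \eta^a(t)H_a + \sum_i \xi^i(t) V_i$; the claim is that $\xi\equiv 0$ given $\xi(t_0)=0$. Projecting the geodesic equation $\nabla_{\gamma'}\gamma'=0$ onto $\Va$ and expanding $\nabla_{\gamma'}$ through the frame expansion of $\gamma'$ itself produces, among its terms, the purely horizontal contribution $\sum_{a,b}\eta^a\eta^b(\nabla_{H_b}H_a)^{\Va}$. Being symmetric in $a,b$, this sum is annihilated by the tensorial identity of the previous step. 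Every remaining term in the vertical projection carries at least one factor $\xi^i$, so the projected equation takes the form $\dot\xi = F(t,\xi)$ with $F$ smooth and $F(t,0)=0$.

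The conclusion then follows from uniqueness of solutions to ordinary differential equations: $\xi\equiv 0$ solves $\dot\xi=F(t,\xi)$ with $\xi(t_0)=0$, and since $F$ is locally Lipschitz this is the only solution, so $\gamma'$ remains horizontal throughout the chart. The set of parameters where $\gamma'$ is horizontal is nonempty (it contains $t_0$), open (by the local argument applied at each such point), and closed (by continuity of $\gamma'$), hence equals the whole interval of definition, which gives $\gamma'(t)\in(T_{\gamma(t)}\Fol)^\perp$ for every $t$. The main obstacle is the first step: one must recognize that the bundle-like condition is exactly the vanishing of the symmetrized second fundamental form of $\Ha$, and — crucially — that although this identity is derived only for foliate fields, its tensoriality together with the local existence of enough foliate horizontal fields upgrades it to a pointwise statement usable for an arbitrary frame. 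Once that is in hand, the remaining steps are bookkeeping and the ODE uniqueness is immediate.
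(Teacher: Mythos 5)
Your proposal is correct, but it cannot be matched against an argument in the paper, because the paper gives none: Theorem \ref{transverse-tot-geod} is stated there as a known fundamental fact, with the proof explicitly deferred to Molino's book \cite{Molino88}. What you have written is a self-contained reconstruction of the classical argument (going back to Reinhart) that the cited literature contains. Both of your key steps are sound: (i) applying metric compatibility and torsion-freeness to foliate horizontal fields $X,Y$ and a vertical $V$ does yield $V\,h(X,Y) = -h\bigl((\nabla_X Y + \nabla_Y X)^{\Va}, V\bigr)$, so the bundle-like condition gives the vanishing of this symmetrized vertical-valued form; and your tensoriality remark, combined with the fact that horizontal lifts of transverse coordinate fields in a foliated chart are projectable (hence foliate) and span $T\Fol^\perp$ pointwise, legitimately upgrades the identity from foliate fields to arbitrary horizontal vectors, which is exactly what is needed to kill the purely horizontal quadratic term $\sum_{a,b}\eta^a\eta^b(\nabla_{H_b}H_a)^{\Va}$ in an arbitrary frame; (ii) the vertical projection of the geodesic equation then reads $\dot\xi = F(t,\xi)$ with $F$ smooth (hence locally Lipschitz) and $F(t,0)=0$, so Picard--Lindel\"of uniqueness forces $\xi\equiv 0$ locally, and your nonempty-open-closed argument correctly globalizes this along the interval of definition. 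The trade-off between the two treatments is the expected one: the paper's citation keeps its exposition short, since only the statement is used downstream (in Theorems \ref{G-orbits-transverse-tot-geod} and \ref{torus-thm2}), whereas your argument makes the result self-contained and isolates precisely where the bundle-like hypothesis enters, at the cost of the frame-and-ODE bookkeeping you acknowledge.
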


This theorem is fundamental in the theory of Riemannian foliations
and its proof can be found in \cite{Molino88}. We can provide its
%correction
geometric interpretation as follows. Let $M$, $\Fol$ and $h$ be as
in Theorem \ref{transverse-tot-geod}, and denote with $T\Fol^\perp$
the orthogonal complement of $T\Fol$ in $TM$; in particular, $TM =
T\Fol \oplus T\Fol^\perp$. Hence, Theorem \ref{transverse-tot-geod}
states that every geodesic with an initial velocity vector in
$T\Fol^\perp$ has velocity vector contained in $T\Fol^\perp$ for all
time.

In a sense, the above states that the orthogonal complement
$T\Fol^\perp$ contains all geodesics perpendicular to $T\Fol$. If
the codimension of $\Fol$ is $1$, then $T\Fol^\perp$ is
one-dimensional and it can be integrated to a smooth one-dimensional
foliation $\Fol^\perp$ whose leaves are perpendicular to those of
$\Fol$. In such case, Theorem \ref{transverse-tot-geod} ensures that
the leaves of $\Fol^\perp$ are geodesics with respect to the
bundle-like metric $h$.

If $\Fol$ has codimension greater than $1$, then we can still
consider the possibility of $T\Fol^\perp$ to be integrable, e.g. to
satisfy the hypothesis of Frobenius theorem (see \cite{Warner83}).
%correction
In such case, we do have a foliation $\Fol^\perp$ whose leaves are
orthogonal to those of $\Fol$. Again, in this case, Theorem
\ref{transverse-tot-geod} implies that the leaves of $\Fol^\perp$
are totally geodesic. At the same time, the vector bundle
$T\Fol^\perp$ is not always integrable.
%correction
Nevertheless, the above discussion shows that $T\Fol^\perp$ can be
thought of as being totally geodesic from a broader viewpoint.
Alternatively, we can say that, from a geometric point of view, the
foliation $\Fol$ is transversely totally geodesic.

It is worth mentioning that the integrability of the bundle
%correction
$T\Fol^\perp$ given by a Riemannian foliation and a bundle-like
metric is not at all trivial and requires strong restrictions on the
geometry of the foliation or its leaves. As an example, we refer to
\cite{Quiroga-Ann}, where the integrability of the corresponding
$T\Fol^\perp$ is only obtained for leaves carrying a suitable
nonpositively curved Riemannian metric. At the same time, we will
%correction
prove in the following sections that the orthogonal complement to
the tangent bundle of the $\T^n$-orbits in a Reinhardt domain is
integrable, which will then imply the presence of strong geometric
features on such domains.

\section{Extrinsic geometry of foliations}\label{se:Extrinsic-fol}

For a submanifold of any Riemannian manifold one can measure the
obstruction for the submanifold to be a totally geodesic in the
ambient. This also measures the extrinsic curvature of the
submanifold, which is determined by the particular embedding and not
just the inherited metric. We now briefly discuss some well known
methods to study this extrinsic curvature and refer to
%correction
\cite{KobNomizu96-2} and \cite{Oneill83} for further details. For
our purposes it will be convenient and natural to discuss these
notions for foliations.

Let $\widehat{M}$ be a Riemannian manifold and $\Fol$ be a foliation
of $\widehat{M}$ having codimension $q$ and with $p$-dimensional
%correction
leaves. We will denote by $\widehat{\nabla}$ the Levi-Civita
connection of $\widehat{M}$ and by $\nabla$ the connection of the
bundle $T\Fol$ obtained by pasting together the Levi-Civita
connections of the leaves of $\Fol$ for the metric inherited from
that of $\widehat{M}$. Let us also denote by $\Va$ and $\Ha$ the
orthogonal projections of $T\widehat{M}$ onto $T\Fol$ and
$T\Fol^\perp$, respectively. These projections are respectively
called the vertical and horizontal projections with respect to
$\Fol$. Then the following holds (see \cite{KobNomizu96-2}):

\begin{lemma}
The connection $\nabla$ is the vertical projection of
$\widehat{\nabla}$. More precisely, we have:
$$
    \nabla_X Y = \Va(\widehat{\nabla}_X Y),
$$
for every pair of vector fields in $\widehat{M}$ everywhere tangent
to the leaves of $\Fol$.
\end{lemma}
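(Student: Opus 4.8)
The plan is to reduce the statement to a single leaf and then recognize the right-hand side as the unique Levi--Civita connection of that leaf. Since $\nabla$ is defined leafwise --- as the Levi--Civita connection of each leaf $L$ of $\Fol$ equipped with the metric induced from $\widehat{M}$ --- it suffices to fix one leaf $L$ and prove that the operator $D$ defined by $D_X Y = \Va(\widehat{\nabla}_X Y)$, for $X,Y$ tangent to $L$, coincides with the intrinsic Levi--Civita connection of $L$. By the uniqueness part of the fundamental theorem of Riemannian geometry, it is enough to check that $D$ is a metric, torsion-free connection on $T\Fol|_L$.

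First I would verify the connection axioms. Tensoriality in the first argument, $D_{fX}Y = f\, D_X Y$, and the Leibniz rule in the second, $D_X(fY) = (Xf)\,Y + f\,D_X Y$, follow at once from the corresponding properties of $\widehat{\nabla}$, the linearity of the projection $\Va$, and the fact that $\Va$ fixes vectors already tangent to the leaves, so that $\Va((Xf)Y) = (Xf)Y$.

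Next I would establish that $D$ is torsion-free and metric. The key point for torsion is that the tangent distribution $T\Fol$ of a foliation is involutive, hence $[X,Y]$ is again tangent to $L$ whenever $X$ and $Y$ are; using that $\widehat{\nabla}$ has vanishing torsion one gets $D_X Y - D_Y X = \Va(\widehat{\nabla}_X Y - \widehat{\nabla}_Y X) = \Va([X,Y]) = [X,Y]$, the last equality because $[X,Y]$ is tangent. For metric compatibility, write $g$ for the ambient metric and $g^L$ for its restriction to $L$. For tangent $X,Y,Z$ I would split $\widehat{\nabla}_X Y = \Va(\widehat{\nabla}_X Y) + \Ha(\widehat{\nabla}_X Y)$ and observe that the horizontal part is $g$-orthogonal to the tangent field $Z$, so that $g(\widehat{\nabla}_X Y, Z) = g^L(D_X Y, Z)$. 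Feeding this identity, and the analogous one with the roles of the second and third arguments exchanged, into the compatibility relation $X\, g(Y,Z) = g(\widehat{\nabla}_X Y, Z) + g(Y, \widehat{\nabla}_X Z)$ for $\widehat{\nabla}$ yields $X\, g^L(Y,Z) = g^L(D_X Y, Z) + g^L(Y, D_X Z)$, as required.

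With $D$ shown to be a torsion-free metric connection on each leaf, uniqueness forces $D = \nabla$, proving the identity. I do not anticipate a genuine obstacle, as this is the classical tangential-projection description of the induced connection (the tangential half of the Gauss formula); the only points needing care are the well-definedness of $\widehat{\nabla}_X Y$ along a leaf --- which holds because, $X$ being tangent, the value depends only on $Y$ along integral curves of $X$ inside $L$ --- and the involutivity of $T\Fol$ invoked in the torsion computation.
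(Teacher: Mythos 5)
Your proof is correct, but note that the paper itself does not prove this lemma at all: it states it as a known fact and refers the reader to Kobayashi--Nomizu (volume II), where it appears as part of the classical theory of the induced connection on submanifolds (the tangential half of the Gauss formula). Your argument is the standard textbook proof of that fact, carried out leafwise: you check that $D_X Y = \Va(\widehat{\nabla}_X Y)$ is a connection on $T\Fol$ restricted to a leaf, that it is torsion-free (correctly using involutivity of $T\Fol$ so that $\Va([X,Y]) = [X,Y]$), and that it is compatible with the induced metric (correctly using that the horizontal component of $\widehat{\nabla}_X Y$ is orthogonal to any vertical $Z$), and then you invoke uniqueness of the Levi-Civita connection. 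The well-definedness remark at the end --- that $(\widehat{\nabla}_X Y)_p$ for tangent $X$ depends only on $Y$ restricted to the leaf --- is exactly the point that makes the leafwise reduction legitimate, and it is good that you addressed it. What your route buys, compared with the paper's citation, is a self-contained verification; what it costs is nothing, since every ingredient (involutivity, the orthogonal splitting $TM = T\Fol \oplus T\Fol^\perp$, uniqueness of the Levi-Civita connection) is already available in the paper's setting.
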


We recall that the Levi-Civita connection is the differential
operator that allows to define geodesics. Hence, the previous result
shows that the obstruction for the leaves of $\Fol$ to be totally
geodesic is precisely the difference between $\widehat{\nabla}$ and
its vertical projection as above, in other words, the horizontal
projection of $\widehat{\nabla}$. This suggests to introduce the
following classical definition (see \cite{KobNomizu96-2}).

\begin{definition}
Let $\Fol$ be a foliation of a Riemannian manifold $\widehat{M}$.
The second fundamental form $\II$ of the leaves of $\Fol$ is given
at every $x \in \widehat{M}$ by:
\begin{eqnarray*}
  \II_x : T_x\Fol\times T_x\Fol &\rightarrow& T_x\Fol^\perp \\
  (u,v) &\mapsto& \Ha(\widehat{\nabla}_X Y)_x,
\end{eqnarray*}
where $X,Y$ are vector fields defined in a neighborhood of $x$ in
$\widehat{M}$ everywhere tangent to $\Fol$ and such that $X_x = v$
and $Y_x = v$.
\end{definition}

It is very well known that the definition of $\II_x$ as above does
not depend on the choice of the vector fields $X$ and $Y$. It is
also known that the second fundamental form at every every point is
a symmetric bilinear form that defines a tensor which is a section
of the bundle $T\Fol^*\otimes T\Fol^* \otimes T\Fol^\perp$.

As it occurs with any tensor, it is easier to describe some of the
properties of $\II$ by introducing local bases for the bundles
involved and computing the components with respect to such bases.
This is particularly useful if one has global bases for the bundles.
These are defined more precisely as follows.

\begin{definition}
Let $E \rightarrow \widehat{M}$ be a subbundle of the tangent bundle
of the Riemannian manifold $\widehat{M}$. Then, a collection
$(V_1,\dots,V_k)$ of sections of $E$ defined on all of $\widehat{M}$
is called a global framing of $E$ if for every $x\in \widehat{M}$,
the set of tangent vectors $(V_1(x),\dots,V_k(x))$ is a basis for
the fiber $E_x$.
\end{definition}

The next result is an obvious consequence of the symmetry of $\II$.
It will allow us to simplify the computation of the values for
$\II$.

\begin{proposition}\label{prop:II-quadratic}
Let $\widehat{M}$ be a Riemannian manifold with a foliation $\Fol$
as above. Suppose that $(V_k)_{k=1}^p$ is a global framing of
$T\Fol$. Then $\II$ as a tensor is completely determined by the
vector fields $\II(V_k + V_l,V_k + V_l)$ for $k,l=1,\dots,p$.
\end{proposition}
\begin{proof}
It is enough to use the relation:
$$
    \II(V_k,V_l) = \frac{1}{2}(\II(V_k + V_l,V_k + V_l) - \II(V_k,V_k) - \II(V_l,V_l))
$$
which is satisfied by the symmetry of $\II$.
\end{proof}

\section{Isometric actions of Lie groups}

In this section we will consider some general notions about actions
of Lie groups on a manifold preserving a Riemannian metric. In what
follows $M$ will denote a smooth manifold and $G$ a connected Lie
group acting smoothly on the left on $M$. For the next definition,
we recall that the stabilizer of a point $x\in M$ for the $G$-action
is the set $G_x = \{ g\in G : gx = x\}$.

\begin{definition}
The action of $G$ on $M$ is called free (locally free) if for every
$x \in M$ the stabilizer $G_x$ is trivial (respectively discrete).
\end{definition}

A straightforward application of Frobenius theorem on the
integrability of vector subbundles of a tangent bundle (see
\cite{Warner83}) allows us to obtain the following result.
%correction

\begin{proposition}\label{G-orbits-fol}
If $G$ acts locally freely on $M$, then the $G$-orbits define a
smooth foliation on $M$.
\end{proposition}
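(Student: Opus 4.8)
If $G$ acts locally freely on $M$, then the $G$-orbits define a smooth foliation on $M$.

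Let me think about how to prove this.The plan is to exhibit the tangent spaces to the $G$-orbits as the fibers of a smooth involutive distribution of constant rank, and then to invoke the Frobenius theorem (see \cite{Warner83}). First I would introduce the infinitesimal action of the Lie algebra $\mathfrak{g}$ of $G$: to each $\xi \in \mathfrak{g}$ one associates the fundamental vector field $\xi^*$ on $M$ defined by
$$
  \xi^*_x = \frac{d}{dt}\Big|_{t=0} \exp(t\xi)\cdot x,
$$
and one sets $D_x = \{\xi^*_x : \xi \in \mathfrak{g}\} \subseteq T_x M$. Since the orbit map $g \mapsto g\cdot x$ has differential $\xi \mapsto \xi^*_x$ at the identity, the subspace $D_x$ is exactly the tangent space to the orbit through $x$.

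The role of local freeness is precisely to make this distribution of constant rank. The kernel of the linear map $\mathfrak{g} \rightarrow T_x M$, $\xi \mapsto \xi^*_x$, consists of those $\xi$ for which $\exp(t\xi)$ fixes $x$ for all $t$, i.e. it is the Lie algebra of the stabilizer $G_x$; as $G_x$ is discrete, this Lie algebra is trivial and the map is injective. Hence $\dim D_x = \dim G$ for every $x$, and fixing a basis $\xi_1,\dots,\xi_k$ of $\mathfrak{g}$ yields a global framing $\xi_1^*,\dots,\xi_k^*$ of $D$ by smooth vector fields. This simultaneously shows that $D$ is a smooth subbundle of $TM$ and that it has the constant rank $k = \dim G$ that Frobenius requires.

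Involutivity then follows from the bracket relation $[\xi^*,\eta^*] = -[\xi,\eta]^*$ (the infinitesimal action is an anti-homomorphism of Lie algebras for a left action): for any two framing fields the bracket $[\xi_i^*,\xi_j^*]$ is again a fundamental vector field, hence a section of $D$. With $D$ smooth, of constant rank, and involutive, the Frobenius theorem produces an integral foliation whose leaves are the maximal connected integral manifolds of $D$, and the flat Frobenius charts supply the foliated atlas demanded by Definition \ref{def-foliation}.

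The step I expect to require the most care is identifying the leaves with the $G$-orbits. Each orbit $G\cdot x$ is connected (as $G$ is connected) and is an integral manifold of $D$ of full dimension $k$, hence lies inside, and is open in, the leaf $L$ through $x$. Since the flow of $\xi^*$ at time $t$ is the action of $\exp(t\xi)$, moving along the framing fields never leaves the orbit, so $L$ decomposes as a union of orbits, each of which is open in $L$; the connectedness of $L$ then forces $L$ to be a single orbit. Thus the foliation delivered by Frobenius is exactly the partition of $M$ into $G$-orbits.
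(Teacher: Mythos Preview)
Your proof is correct and follows essentially the same approach as the paper: both introduce the fundamental vector fields $\xi^*$, use local freeness to show the map $\mathfrak{g}\to T_xM$ is injective (hence the distribution has constant rank), invoke the bracket relation $[\xi^*,\eta^*]=-[\xi,\eta]^*$ for involutivity, apply Frobenius, and then identify the resulting leaves with the $G$-orbits via the connectedness of $G$. Your final step is in fact somewhat more carefully argued than the paper's, which simply notes that $G$ is generated by $\exp(\mathfrak{g})$ and asserts the identification.
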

\begin{proof}
Denote by $\mathfrak{g}$ the Lie algebra of $G$. Then for every
%correction
$X\in \mathfrak{g}$ we can define the transformations of $M$ given
by the maps:
\begin{eqnarray*}
    \varphi_t: M &\rightarrow& M \\
    x &\mapsto& \exp(tX) x
\end{eqnarray*}
for every $t\in \mathbb{R}$. This family of maps is in fact a
one-parameter group of diffeomorphism of $M$, in other words, we
have:
$$
    \varphi_{t_1+t_2} = \varphi_{t_1}\circ\varphi_{t_2}
$$
for every $t_1, t_2 \in \mathbb{R}$. Hence, there is a smooth vector
field $X^*$ on $M$ given by:
$$
    X^*_x = \frac{d}{dt}\Big|_{t=0} (\exp(tX)x).
$$
Also, it is easy to check that the global flow of $X^*$ is given by
$(\varphi_t)_t$. Furthermore, since the Lie group $G$ acts locally
freely, the condition $X^*_x = 0$ for some $x\in M$ implies $X=0$;
otherwise the subgroup $(\exp(tX))_t$ would be nondiscrete and
contained in $G_x$ for some $x\in M$.

From the above remarks it follows that the map:
%correction
\begin{eqnarray*}
        M\times \mathfrak{g} &\rightarrow& TM \\
        (x,X) &\mapsto& X^*_x
\end{eqnarray*}
is a smooth vector bundle inclusion which thus defines a subbundle
$T\mathcal{O}$ of $TM$.

On the other hand, by using the results in \cite{KobNomizu96}, the
following relation holds for every $X, Y \in \mathfrak{g}$:
$$
        [X^*,Y^*] = - [X,Y]^*
$$
From this it is easy to conclude that the smooth sections of
$T\mathcal{O}$ are closed under the Lie brackets of smooth vector
fields. By Frobenius theorem (see \cite{Warner83}) the vector
subbundle $T\mathcal{O}$ induces a smooth foliation whose leaves
have the fibers of $T\mathcal{O}$ as tangent spaces. Since $G$ is
connected it is generated by the set $\exp{\mathfrak{g}}$, and so
one can conclude that the leaves of such foliation are precisely the
$G$-orbits.
\end{proof}

In the proof of the previous result it is shown that the tangent
bundle of the foliation by $G$-orbits is $T\mathcal{O}$. Whenever
$G$ acts locally freely we will use $T\mathcal{O}$ to denote such
tangent bundle.

We will now consider the case where $G$ acts locally freely
preserving a Riemannian metric on $M$.

\begin{theorem}\label{G-fol-Riem-fol}
If $G$ acts locally freely on $M$ preserving a Riemannian metric
$h$, then the $G$-orbits define a smooth Riemannian foliation for
which $h$ is a compatible bundle-like metric.
\end{theorem}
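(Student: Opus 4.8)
The plan is to invoke the two structural results already in place. By Proposition \ref{G-orbits-fol} the $G$-orbits form a smooth foliation $\Fol$ whose tangent bundle is $T\mathcal{O}$, so the foliation part is settled and it remains only to produce the transverse Riemannian structure. For this I would appeal to Theorem \ref{bundle-like-vs-Riem-fol}: it suffices to prove that the $G$-invariant metric $h$ is bundle-like for $\Fol$, since then $O_T(M,h)$ automatically defines a transverse Riemannian structure, making $\Fol$ a Riemannian foliation, and, being exactly this reduction, $h$ is compatible in the sense of Definition \ref{def-bundle-like-compatible}.

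The heart of the argument is therefore the verification that $h$ is bundle-like. Recall from the proof of Proposition \ref{G-orbits-fol} that the fundamental vector fields $\xi^*$, $\xi \in \mathfrak{g}$, span $T\mathcal{O}$ pointwise; hence a smooth function is constant along the leaves precisely when it is annihilated by every $\xi^*$ (locally, a basis of $\mathfrak{g}$ gives a framing of $T\mathcal{O}$, so any leaf-tangent field is a function-combination of the $\xi^*$). Thus, given foliate vector fields $X, Y$ perpendicular to $T\Fol = T\mathcal{O}$, I must show $\xi^*(h(X,Y)) = 0$ for all $\xi \in \mathfrak{g}$. The key observation is that $G$-invariance of $h$ means the flow of $\xi^*$, namely the action of $\exp(t\xi)$, preserves $h$, so each $\xi^*$ is a Killing field and $\mathcal{L}_{\xi^*} h = 0$.

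Expanding the Lie derivative then gives
$$
  \xi^*(h(X,Y)) = h([\xi^*,X],Y) + h(X,[\xi^*,Y]).
$$
Since $X$ is foliate and $\xi^*$ is tangent to the leaves, $[\xi^*,X] = -[X,\xi^*]$ is again tangent to the leaves, i.e.\ lies in $T\mathcal{O}$; as $Y \perp T\mathcal{O}$, the first term vanishes, and symmetrically the second term vanishes because $Y$ is foliate and $X \perp T\mathcal{O}$. Hence $\xi^*(h(X,Y)) = 0$ for every $\xi$, so $h(X,Y)$ is constant along the leaves and $h$ is bundle-like; the theorem then follows from Theorem \ref{bundle-like-vs-Riem-fol} as above.

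I expect the only genuinely substantive point to be the passage from $G$-invariance of $h$ to the Killing property $\mathcal{L}_{\xi^*} h = 0$, together with the careful use of \emph{both} the foliate hypothesis and the orthogonality hypothesis on $X$ and $Y$ to annihilate the two bracket terms. The remaining steps are the standard Koszul-type bookkeeping and the elementary reduction of ``constant along leaves'' to annihilation by the framing $\{\xi^*\}$.
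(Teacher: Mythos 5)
Your proposal is correct and follows essentially the same route as the paper's own proof: reduce via Theorem \ref{bundle-like-vs-Riem-fol} to showing $h$ is bundle-like, realize leaf-tangent directions by fundamental vector fields $Z^*$ from the proof of Proposition \ref{G-orbits-fol}, use that the isometric action makes each $Z^*$ a Killing field so $\mathcal{L}_{Z^*}h = 0$, and then kill both bracket terms in the Lie-derivative expansion using the foliate and orthogonality hypotheses on $X$ and $Y$. There is nothing substantive to add or correct.
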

\begin{proof}
By Theorem \ref{bundle-like-vs-Riem-fol} it is enough to show that
$h$ is bundle-like with respect to the foliation by $G$-orbits
given by Proposition \ref{G-orbits-fol}.

Choose $X$ and $Y$ foliate vector fields perpendicular to the
$G$-orbits. We need to prove that $v(h(X,Y)) = 0$, for every $v\in
T\mathcal{O}$. By the proof of Proposition \ref{G-orbits-fol} there
exists $Z\in \mathfrak{g}$, the Lie algebra of $G$, such that $Z^*_x
= v$, where $x$ is the basepoint of $v$. Hence, it suffices to prove
that $Z^*(h(X,Y)) = 0$ for every $Z \in \mathfrak{g}$.

For any $Z^*$ as above, we denote with $L_{Z^*}$ the Lie derivative
with respect to $Z^*$ and refer to \cite{KobNomizu96} for the definition. In
fact, from \cite{KobNomizu96} it follows that $L_{Z^*}$ when applied to $h$
yields a bilinear form that satisfies:
\begin{equation}\label{Lie-h}
    (L_{Z^*}h)(X,Y) = Z^*(h(X,Y)) - h([Z^*,X],Y) - h(X,[Z^*,Y]).
\end{equation}
On the other hand, since the one-parameter group $(\exp(tX))_t$ acts
by isometries on $(M,h)$, i.e. preserving $h$, it follows that $Z^*$
is a Killing field for $h$ and so it satisfies:
\begin{equation}\label{Z-Killing}
    (L_{Z^*}h)(X,Y) = 0,
\end{equation}
we refer to \cite{KobNomizu96} for this fact and the definitions involved.
From equations (\ref{Lie-h}) and (\ref{Z-Killing}) we obtain:
$$
    Z^*(h(X,Y)) = h([Z^*,X],Y) + h(X,[Z^*,Y]).
$$
Then we observe that, since $X$ and $Y$ are foliate, the vector
fields $[Z^*,X]$ and $[Z^*,Y]$ are tangent to the $G$-orbits, and so
the terms on the right-hand side of the last equation vanish since
$X$ and $Y$ are also perpendicular to the $G$-orbits. This shows
that $Z^*(h(X,Y)) = 0$ thus concluding the proof.
\end{proof}

From the last result and Theorem \ref{transverse-tot-geod} we obtain
%correction
the following consequence.

\begin{theorem}\label{G-orbits-transverse-tot-geod}
If $G$ acts locally freely on $M$ preserving a Riemannian metric $h$
and $\gamma$ is a geodesic (with respect to $h$) perpendicular at
some point to a $G$-orbit, then $\gamma$ intersects every $G$-orbit
perpendicularly.
\end{theorem}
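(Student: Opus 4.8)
The plan is to deduce this statement directly from the two preceding theorems, since all the substantive geometric work has already been carried out. First I would invoke Theorem \ref{G-fol-Riem-fol}: because $G$ acts locally freely on $M$ preserving $h$, the $G$-orbits define a smooth Riemannian foliation $\Fol$, and $h$ is a compatible bundle-like metric for $\Fol$. Recall moreover, from Proposition \ref{G-orbits-fol}, that the leaves of $\Fol$ coincide exactly with the $G$-orbits, so that $T_x\Fol = T\mathcal{O}_x$ is the tangent space to the orbit through each point $x$. With these facts the hypotheses of Theorem \ref{transverse-tot-geod} are all in place.

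Next I would translate the hypothesis and the desired conclusion into the language of the foliation $\Fol$. The assumption that $\gamma$ is perpendicular at some point to the $G$-orbit passing through that point means precisely that, for the relevant parameter $t_0$, we have $\gamma'(t_0) \in (T_{\gamma(t_0)}\Fol)^\perp$, since $T_{\gamma(t_0)}\Fol$ is the tangent space to that orbit. Likewise, the conclusion that $\gamma$ meets every $G$-orbit it crosses perpendicularly amounts to the assertion that $\gamma'(t) \in (T_{\gamma(t)}\Fol)^\perp$ for every $t$, because at each parameter the point $\gamma(t)$ lies on exactly one orbit, namely the leaf of $\Fol$ through $\gamma(t)$.

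With this dictionary established, I would apply Theorem \ref{transverse-tot-geod} verbatim: a geodesic of the compatible bundle-like metric $h$ whose velocity is perpendicular to a leaf at one point keeps its velocity perpendicular to the leaves at all points. This yields exactly $\gamma'(t) \in (T_{\gamma(t)}\Fol)^\perp$ for all $t$, which by the reformulation above is the claim, completing the argument.

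Since the entire content lives in Theorems \ref{G-fol-Riem-fol} and \ref{transverse-tot-geod}, I do not expect any genuine obstacle here; the result is essentially a corollary. The only point that requires a moment of care is the identification between ``perpendicular to a $G$-orbit'' and ``perpendicular to $T\Fol$,'' which is immediate once one recalls that the leaves of the foliation are precisely the orbits. I would state this identification explicitly so that the short argument remains self-contained.
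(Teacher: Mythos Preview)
Your proposal is correct and matches the paper's own treatment: the paper states this theorem as an immediate consequence of Theorem~\ref{G-fol-Riem-fol} and Theorem~\ref{transverse-tot-geod}, giving no further argument. Your explicit unpacking of the identification between $G$-orbits and leaves of $\Fol$ is a helpful elaboration, but the approach is identical.
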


The previous result and the remarks following Theorem
\ref{transverse-tot-geod} allows us to say that, from a geometric
point of view, every locally free action of a group $G$ preserving a
Riemannian metric $h$ defines (through its orbits) a foliation which
is transversely totally geodesic.

\section{Lagrangian foliations associated with a Reinhardt domain}
We now proceed to study the geometry of Reinhardt domains. For this
we will obtain some properties of its Bergman metric and apply the
foliation theory considered in the previous sections. As before, in
this section $D\subset \C^n$ denotes a bounded logarithmically
%correction
convex complete Reinhardt domain centered at the origin.

Using the monomial orthonormal base $\{e_p\}_{p \in
\Z_+^n}$ of $\Aa^2_{\mu}(D)$, mentioned in Section \ref{se:Reinh-Toepl}, we have obviously

\begin{lemma}\label{le:Bergman-Reinhardt}
The Bergman kernel $K_D$ of the domain $D$ admits the following representation
\begin{equation*}
    K_D(z, \zeta) = (2\pi)^{-n} \sum_{p\in\Z^n_+} \alpha_p^2\, z^p \bar{\zeta}^p,
\end{equation*}
where the coefficients $\alpha_p$, $p\in\Z^n_+$, are given by
(\ref{eq:alpha_p}). In particular, the function $K_D(z,z)$ depends
only on $r$.
\end{lemma}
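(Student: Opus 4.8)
The plan is to use the explicit orthonormal basis $\{e_p\}_{p\in\Z_+^n}$ of $\Aa^2_\mu(D)$ established at the end of Section \ref{se:Reinh-Toepl}, where $e_p(z)=(2\pi)^{-\frac{n}{2}}\alpha_p z^p$ and $\alpha_p=\alpha_{|p|}$ is given by (\ref{eq:alpha_p}). The reproducing kernel of a Hilbert space of analytic functions is obtained by summing $e_p(z)\overline{e_p(\zeta)}$ over any orthonormal basis, so this is essentially a direct computation once the basis is in hand.

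First I would recall the general fact that for a weighted Bergman space with orthonormal basis $\{e_p\}$, the Bergman kernel is
\begin{equation*}
    K_D(z,\zeta) = \sum_{p\in\Z_+^n} e_p(z)\,\overline{e_p(\zeta)}.
\end{equation*}
This series converges locally uniformly and is independent of the choice of basis; one can justify it by noting that $B_{D,\mu}$ reproduces each $e_p$ and that the partial sums are the kernels of the projections onto the finite-dimensional subspaces spanned by finitely many $e_p$, which increase to $B_{D,\mu}$. Substituting $e_p(z)=(2\pi)^{-\frac{n}{2}}\alpha_p z^p$ then gives
\begin{equation*}
    K_D(z,\zeta) = \sum_{p\in\Z_+^n} (2\pi)^{-\frac{n}{2}}\alpha_p z^p \cdot \overline{(2\pi)^{-\frac{n}{2}}\alpha_p \zeta^p} = (2\pi)^{-n}\sum_{p\in\Z_+^n}\alpha_p^2\, z^p\bar\zeta^p,
\end{equation*}
using that the $\alpha_p$ are real and positive, which is the claimed representation.

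For the final assertion, I would set $z=\zeta$ and write $z^p\bar z^p = |z^p|^2 = \prod_{k=1}^n |z_k|^{2p_k} = r^{2p}$ in the notation $r=(|z_1|,\dots,|z_n|)$. Since $\alpha_p$ depends only on $|p|=(|p_1|,\dots,|p_n|)$ and here $p\in\Z_+^n$ so $|p|=p$, every term of
\begin{equation*}
    K_D(z,z) = (2\pi)^{-n}\sum_{p\in\Z_+^n}\alpha_p^2\, r^{2p}
\end{equation*}
depends only on $r$, hence so does $K_D(z,z)$.

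I do not expect a serious obstacle here; the statement is flagged as ``obviously'' in the text and the work was done in constructing the basis. The only point needing a word of care is the convergence and basis-independence of the kernel series, which is standard for reproducing kernel Hilbert spaces and can be invoked directly; one should also keep the normalization factor $(2\pi)^{-n}$ straight, since it comes from the two factors of $(2\pi)^{-n/2}$ in the $e_p$ rather than from the $\alpha_p$.
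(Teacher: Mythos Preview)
Your argument is correct and is exactly the one the paper has in mind: the lemma is stated there without proof, preceded by ``Using the monomial orthonormal base $\{e_p\}_{p\in\Z_+^n}$ \dots\ we have obviously,'' so the intended justification is precisely the reproducing-kernel identity $K_D(z,\zeta)=\sum_p e_p(z)\overline{e_p(\zeta)}$ that you spell out.
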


In this section we will use the polar coordinates $z_k=r_k t_k=r_k e^{i\theta_k}$, $k=1,...,n$, for points $z=(z_1,...,z_n) \in D$.

\begin{theorem}\label{th:metric-Reinhardt}
Let $ds^2_D$ be the Bergman metric of $D$ considered as a Hermitian metric and $h_D = \re(ds^2_D)$ the associated Riemannian metric. Then:
    $$
        h_D = \sum_{k,l=1}^n F_{kl}(r)(dr_k\otimes dr_l  +
                    r_k r_l d\theta_k \otimes d\theta_l),
    $$
where the functions $F_{kl}$ are given by:
    $$
    F_{kl}(r) = \frac{1}{4}
        \left(\frac{\partial^2}{\partial r_k \partial r_l}
        + \frac{\delta_{kl}}{r_k}\frac{\partial}{\partial r_k}
        \right) \log K_D(z,z),
    $$
and depend only on $r$.
\end{theorem}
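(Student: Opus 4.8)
The plan is to begin from the standard description of the Bergman metric as the Hermitian metric $ds^2_D = \sum_{j,k=1}^n g_{j\bar k}\, dz_j\otimes d\overline{z}_k$ with coefficients $g_{j\bar k} = \frac{\partial^2}{\partial z_j\,\partial\overline{z}_k}\log K_D(z,z)$, and to rewrite everything in the polar coordinates $z_k = r_k e^{i\theta_k}$. By Lemma \ref{le:Bergman-Reinhardt} the function $\Phi(r):=\log K_D(z,z)$ depends only on $r=(r_1,\dots,r_n)$, so that $\partial\Phi/\partial\theta_k \equiv 0$ for every $k$; this is the feature of the Reinhardt structure that drives the whole computation. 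First I would record the Wirtinger operators in polar form, namely $\frac{\partial}{\partial z_k} = \frac{e^{-i\theta_k}}{2}\bigl(\frac{\partial}{\partial r_k} - \frac{i}{r_k}\frac{\partial}{\partial\theta_k}\bigr)$ together with its conjugate for $\partial/\partial\overline{z}_k$ (consistent with the polar expression for $\partial/\partial\overline{z}_k$ used earlier in Section \ref{se:Reinh-Berg}).

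Next I would compute $g_{j\bar k}$ directly. Applying $\partial/\partial\overline{z}_k$ to $\Phi$ and using $\partial\Phi/\partial\theta_k=0$ gives $\partial\Phi/\partial\overline{z}_k = \frac{e^{i\theta_k}}{2}\,\partial\Phi/\partial r_k$. Applying $\partial/\partial z_j$ then splits into two cases. For $j\neq k$ only the radial derivative contributes and one gets $g_{j\bar k} = \frac{e^{i(\theta_k-\theta_j)}}{4}\,\frac{\partial^2\Phi}{\partial r_j\,\partial r_k}$. For $j=k$ the angular derivative $\partial/\partial\theta_k$ now acts on the factor $e^{i\theta_k}$ and produces the extra first-order term, yielding $g_{k\bar k} = \frac14\bigl(\frac{\partial^2}{\partial r_k^2} + \frac1{r_k}\frac{\partial}{\partial r_k}\bigr)\Phi$. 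Both cases are summarized uniformly as $g_{j\bar k} = e^{i(\theta_k-\theta_j)}F_{jk}(r)$, where $F_{jk}$ is exactly the symmetric, $\theta$-independent function in the statement, with the $\delta_{jk}$ term switching on only on the diagonal.

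The last step is to substitute into $ds^2_D$ and take the real part. Differentiating $z_j = r_j e^{i\theta_j}$ gives $dz_j = e^{i\theta_j}(dr_j + ir_j\,d\theta_j)$, hence $dz_j\otimes d\overline{z}_k = e^{i(\theta_j-\theta_k)}(dr_j+ir_j\,d\theta_j)\otimes(dr_k - ir_k\,d\theta_k)$. Multiplying by $g_{j\bar k}=e^{i(\theta_k-\theta_j)}F_{jk}$, the two opposite phases cancel and the metric becomes $ds^2_D = \sum_{j,k}F_{jk}\,(dr_j+ir_j\,d\theta_j)\otimes(dr_k-ir_k\,d\theta_k)$ with no $\theta$-dependence left in the coefficients. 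Expanding the product, the real part of each summand is $dr_j\otimes dr_k + r_jr_k\,d\theta_j\otimes d\theta_k$, while the cross terms $i(r_j\,d\theta_j\otimes dr_k - r_k\,dr_j\otimes d\theta_k)$ are purely imaginary and are killed by $\re$; this is precisely the asserted formula. That each $F_{kl}$ depends only on $r$ is then immediate from Lemma \ref{le:Bergman-Reinhardt}.

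The main obstacle is bookkeeping rather than conceptual: one must track the phase factors carefully and, above all, handle the diagonal case correctly, since it is precisely the action of $\partial/\partial\theta_k$ on $e^{i\theta_k}$ that generates the $\frac{1}{r_k}\,\partial/\partial r_k$ correction distinguishing $F_{kk}$ from the off-diagonal $F_{jk}$. The clean cancellation of phases between $g_{j\bar k}$ and $dz_j\otimes d\overline{z}_k$ is the conceptual heart of the argument, and it is what makes the resulting Riemannian metric manifestly invariant under the natural $\T^n$-action.
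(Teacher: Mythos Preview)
Your proposal is correct and follows essentially the same approach as the paper's own proof: start from the Hermitian Bergman metric, use Lemma~\ref{le:Bergman-Reinhardt} to know that $\log K_D(z,z)$ depends only on $r$, compute the mixed Wirtinger derivatives in polar coordinates (the paper records the identical formula $\partial^2 F/\partial z_k\partial\bar z_l = \tfrac14(\tfrac{\bar z_k z_l}{r_k r_l}\partial^2 F/\partial r_k\partial r_l + \tfrac{\delta_{kl}}{r_k}\partial F/\partial r_k)$, which is your $e^{i(\theta_l-\theta_k)}F_{kl}$), substitute $dz_k = e^{i\theta_k}(dr_k+ir_k\,d\theta_k)$, and take real parts. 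You have simply made explicit the phase cancellation and the vanishing of the imaginary cross terms that the paper leaves as ``a straightforward computation.''
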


\begin{proof}
For the Bergman kernel $K_D$, the associated Bergman metric
considered as a Hermitian metric is given by:
    $$
ds^2_D = \sum_{k,l=1}^n\frac{\partial^2 \log K_D(z,z)}{\partial z^k \partial \bar{z}^l} dz^k \otimes d\bar{z}^l.
    $$
Let $F(r) = F(z) = \log K_D(z,z)$, which by Lemma
\ref{le:Bergman-Reinhardt} depends only on $r$. Then a
straightforward computation shows that:
    $$
\frac{\partial^2 F}{\partial z_k \partial \bar{z}_l}(z) =
\frac{1}{4} \left(\frac{\bar{z}_k z_l}{r_k r_l}
\frac{\partial^2 F}{\partial r_k \partial r_l}(z)
+ \frac{\delta_{kl}}{r_k}\frac{\partial F}{\partial r_k}(z)
\right).
    $$
The required identity is then obtained by replacing these
expressions into that of $ds^2_D$, using the relations $z_k = r_k
e^{i\theta_k}$ and computing the real part of the expression thus
obtained.
\end{proof}

Consider the following action of the $n$-dimensional torus $\mathbb{T}^n$ on $D$
\begin{eqnarray*}
    \mathbb{T}^n \times D &\rightarrow& D \\
    (t, z) &\mapsto& t z,
\end{eqnarray*}
which being biholomorphic yields the following immediate
%correction
consequence.

\begin{theorem}\label{torus-isometries}
Let $h_D$ be the Riemannian metric of $D$ defined by its Bergman
metric. Then $\mathbb{T}^n$ acts isometrically on $(D,h_D)$.
\end{theorem}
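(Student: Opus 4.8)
The plan is to exploit the biholomorphic invariance of the Bergman metric, which reduces the statement to the observation that each torus element acts on $D$ as a biholomorphic automorphism. First I would fix $t = (t_1,\dots,t_n) \in \mathbb{T}^n$ and consider the map $\phi_t : z \mapsto tz = (t_1 z_1,\dots,t_n z_n)$. Since $D$ is a complete Reinhardt domain centered at the origin, it is by definition invariant under this diagonal unimodular scaling, so $\phi_t$ maps $D$ onto itself; it is visibly holomorphic (linear, with diagonal Jacobian $\diag(t_1,\dots,t_n)$) and has holomorphic inverse $\phi_{t^{-1}} = \phi_{\bar t}$. Hence $\phi_t$ is a biholomorphic automorphism of $D$.

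Next I would recall the transformation law of the Bergman kernel under a biholomorphism $\Phi$, namely $K_D(z,\zeta) = \det J\Phi(z)\, K_D(\Phi(z),\Phi(\zeta))\, \overline{\det J\Phi(\zeta)}$. For $\Phi = \phi_t$ the Jacobian determinant is the constant $t_1\cdots t_n$ of modulus one, so this gives $K_D(\phi_t(z),\phi_t(z)) = K_D(z,z)$; equivalently, this invariance is immediate from Lemma \ref{le:Bergman-Reinhardt}, since $K_D(z,z)$ depends only on $r = (|z_1|,\dots,|z_n|)$, which $\phi_t$ preserves. Because the factors $\det J\phi_t$ and $\overline{\det J\phi_t}$ are (anti)holomorphic, they contribute nothing to $\partial_{z_k}\partial_{\bar z_l}\log K_D$, so $\phi_t^*(ds^2_D) = ds^2_D$. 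Taking real parts yields $\phi_t^* h_D = h_D$, i.e. $\phi_t$ is an isometry of $(D,h_D)$.

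Alternatively --- and this is the most transparent route given the machinery already in place --- I would argue directly from the explicit form of $h_D$ in Theorem \ref{th:metric-Reinhardt}. In the polar coordinates $z_k = r_k e^{i\theta_k}$, writing $t_k = e^{i\varphi_k}$, the action $\phi_t$ sends $\theta_k \mapsto \theta_k + \varphi_k$ while fixing every $r_k$; consequently the one-forms $dr_k$ and $d\theta_k$ and the products $r_k r_l$ are all left unchanged, and the coefficient functions $F_{kl}(r)$ are invariant because they depend only on $r$. Term by term this shows that $h_D$ is preserved.

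Since this is genuinely an immediate consequence, I do not expect a substantive obstacle; the only points requiring a moment's care are verifying that $\phi_t$ really is an automorphism of $D$ (this is exactly where the complete Reinhardt hypothesis enters) and noting that the Jacobian factor in the Bergman-kernel transformation law, being (anti)holomorphic, is invisible to the metric. With either of the two routes above the conclusion follows in one line.
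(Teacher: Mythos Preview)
Your proposal is correct and matches the paper's own treatment: the paper simply notes that the $\T^n$-action is biholomorphic and declares the theorem an immediate consequence, which is exactly your first route via the biholomorphic invariance of the Bergman metric. Your additional verification via the explicit polar-coordinate expression of $h_D$ from Theorem~\ref{th:metric-Reinhardt} is a fine alternative that the paper does not spell out, but it leads to the same one-line conclusion.
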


Note that the action of $\mathbb{T}^n$ is not locally free at all
points of an $n$-dimensional Reinhardt domain, but it is almost so
as the following obvious result states. We recall that in a measure
space, a subset is called conull if its complement has zero measure.

\begin{lemma}\label{Reinhardt-loc-free}
For $D$ as before, the set:
$$
    \widehat{D} = \{ z\in D : \ z_k \neq 0 \mbox{ for every } k=1,\dots,n \}
$$
is the set of points whose stabilizers with respect to the action of
$\mathbb{T}^n$ are discrete. Furthermore, $\widehat{D}$ is an open
conull subset of $D$ on which $\mathbb{T}^n$ acts freely.
\end{lemma}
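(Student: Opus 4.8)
The plan is to compute the stabilizers explicitly and then read off both the discreteness characterization and the topological and measure statements directly from that computation. First I would fix a point $z=(z_1,\dots,z_n)\in D$ and unwind the stabilizer condition. Since the $\T^n$-action is coordinatewise, an element $t=(t_1,\dots,t_n)\in\T^n$ fixes $z$ if and only if $t_k z_k = z_k$ for every $k=1,\dots,n$. For an index $k$ with $z_k\neq 0$ this forces $t_k=1$, while for an index $k$ with $z_k=0$ the equation $t_k\cdot 0 = 0$ imposes no restriction on $t_k$. Hence, writing $S=\{k: z_k=0\}$, the stabilizer is
$$
    G_z = \{t\in\T^n: t_k=1 \mbox{ for all } k\notin S\} \cong \T^{|S|}.
$$

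From this description the first assertion is immediate: $G_z$ is discrete precisely when $|S|=0$, that is, when $z_k\neq 0$ for every $k$, which is exactly the condition defining $\widehat{D}$. Moreover, in that case $G_z$ is not merely discrete but trivial, so $\T^n$ acts freely on $\widehat{D}$; conversely, whenever some $z_k=0$ the stabilizer contains the positive-dimensional subtorus $\T^{|S|}$ and is therefore nondiscrete. This simultaneously identifies $\widehat{D}$ as the set of points with discrete stabilizer and verifies the freeness of the action there.

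It remains to establish the two statements about $\widehat{D}$ as a subset of $D$, and both follow from examining the complement
$$
    D\setminus\widehat{D} = D\cap\bigcup_{k=1}^n\{z\in\C^n: z_k=0\}.
$$
Each set $\{z_k=0\}$ is a complex hyperplane, hence closed in $\C^n$, so their finite union is closed and $\widehat{D}$ is open in $D$. Finally, each such hyperplane has real codimension $2$ in $\C^n\cong\R^{2n}$ and therefore Lebesgue measure zero, so the finite union $D\setminus\widehat{D}$ is a null set and $\widehat{D}$ is conull. I do not anticipate any genuine obstacle here: the argument is a direct coordinatewise computation of the stabilizer followed by the standard observation that finitely many complex hyperplanes form a closed set of measure zero.
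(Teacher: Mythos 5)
Your proof is correct, and it is precisely the ``obvious'' argument the paper has in mind: the paper states this lemma without proof, remarking only that it is obvious. Your coordinatewise stabilizer computation, together with the observation that the complement is a finite union of complex hyperplanes (closed and of Lebesgue measure zero), fills in exactly the intended reasoning.
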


As a consequence of Theorems \ref{G-fol-Riem-fol} and
\ref{torus-isometries} and Lemma \ref{Reinhardt-loc-free} we obtain
the following.

\begin{theorem}\label{torus-thm1}
Let $D$ be as before, $\widehat{D}$ the subset of $D$ defined in
Lemma \ref{Reinhardt-loc-free} and $h_D$ the Riemannian metric
defined by the Bergman metric of $D$. Then, the
$\mathbb{T}^n$-orbits in $\widehat{D}$ define a Riemannian foliation
$\Oa$ for which $h$ is a compatible bundle-like metric.
\end{theorem}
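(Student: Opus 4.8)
The plan is to verify that the hypotheses of Theorem \ref{G-fol-Riem-fol} are met for the group $G = \T^n$ acting on the manifold $M = \widehat{D}$ equipped with the restriction of the metric $h_D$, and then to invoke that theorem directly. Since the conclusion we seek is precisely the conclusion of Theorem \ref{G-fol-Riem-fol}, essentially all the work lies in checking three things: that $\widehat{D}$ is a $\T^n$-invariant open submanifold, that the restricted action is locally free, and that it preserves the restricted metric.

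First I would observe that $\widehat{D}$ is $\T^n$-invariant. Indeed, if $z \in \widehat{D}$, so that $z_k \neq 0$ for all $k$, then for any $t = (t_1,\dots,t_n) \in \T^n$ the point $tz$ has $k$-th coordinate $t_k z_k \neq 0$, since $|t_k| = 1$; hence $tz \in \widehat{D}$. Consequently the $\T^n$-action on $D$ of Theorem \ref{torus-isometries} restricts to a smooth action on the open subset $\widehat{D}$. Being open in $D$, the set $\widehat{D}$ is itself a manifold, and the restriction of $h_D$ to it is a Riemannian metric; moreover the restricted action remains isometric for this metric, because the isometry property asserted in Theorem \ref{torus-isometries} is local in nature. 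I would also record that $\T^n$ is connected, as required by the hypotheses entering Theorem \ref{G-fol-Riem-fol} through Proposition \ref{G-orbits-fol}.

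Next I would note that the restricted action is locally free. This is immediate from Lemma \ref{Reinhardt-loc-free}, which identifies $\widehat{D}$ as exactly the set of points whose $\T^n$-stabilizers are discrete; in fact the lemma provides the stronger statement that $\T^n$ acts \emph{freely} on $\widehat{D}$, which a fortiori gives local freeness.

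With these verifications in place, Theorem \ref{G-fol-Riem-fol} applies verbatim to $G = \T^n$, $M = \widehat{D}$, and $h = h_D$, and yields that the $\T^n$-orbits in $\widehat{D}$ form a smooth Riemannian foliation $\Oa$ for which $h_D$ is a compatible bundle-like metric. I do not expect any genuine obstacle: the substantive content has already been established in the general setting of Theorem \ref{G-fol-Riem-fol}, and the only points to confirm are the invariance and local freeness of the torus action on the open conull set $\widehat{D}$, both of which are furnished directly by Lemma \ref{Reinhardt-loc-free} together with Theorem \ref{torus-isometries}.
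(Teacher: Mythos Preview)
Your proposal is correct and follows exactly the same route as the paper, which simply states the theorem as a direct consequence of Theorem~\ref{G-fol-Riem-fol}, Theorem~\ref{torus-isometries}, and Lemma~\ref{Reinhardt-loc-free}. Your added verification that $\widehat{D}$ is $\T^n$-invariant and that the restricted action remains isometric is implicit in the paper's one-line deduction, so there is nothing missing or divergent.
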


Given such result we now obtain the following statement which makes
use of Theorem \ref{G-orbits-transverse-tot-geod} as well.

\begin{theorem}\label{torus-thm2}
Let $D$ be as before, $\widehat{D}$ the subset of $D$ defined in
Lemma \ref{Reinhardt-loc-free} and $h_D$ the Riemannian metric
defined by the Bergman metric of $D$. If $\gamma$ is a geodesic in
$\widehat{D}$ (with respect to $h$) perpendicular at some point to a
$\mathbb{T}^n$-orbit, then $\gamma$ intersects every
$\mathbb{T}^n$-orbit perpendicularly.
\end{theorem}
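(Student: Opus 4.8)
The plan is to recognize Theorem~\ref{torus-thm2} as a direct specialization of the abstract Theorem~\ref{G-orbits-transverse-tot-geod} to the concrete data $M = \widehat{D}$, $G = \mathbb{T}^n$, and $h = h_D$. All of the hypotheses needed for that general statement have already been established in the preceding results, so the work consists entirely in checking that they transfer verbatim to the present setting.

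First I would verify the group-action hypotheses. The torus $\mathbb{T}^n$ is a connected Lie group, which matches the standing assumption of the section on isometric actions of Lie groups. By Lemma~\ref{Reinhardt-loc-free}, the action of $\mathbb{T}^n$ on $\widehat{D}$ is free, hence in particular locally free, which is exactly the regularity required. One small point I would make explicit is that $\widehat{D}$ is itself $\mathbb{T}^n$-invariant: multiplying $z$ by a unit-modulus vector $t$ leaves unchanged which coordinates $z_k$ vanish, so $tz \in \widehat{D}$ whenever $z \in \widehat{D}$. This guarantees that the restriction of the $\mathbb{T}^n$-action to $\widehat{D}$ is a well-defined smooth action on the open submanifold $\widehat{D}$.

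Next I would supply the metric hypothesis. By Theorem~\ref{torus-isometries}, $\mathbb{T}^n$ acts by isometries of $(D, h_D)$; restricting to the invariant open set $\widehat{D}$, it acts by isometries of $(\widehat{D}, h_D)$ as well. Thus $\mathbb{T}^n$ acts locally freely on $\widehat{D}$ while preserving the Riemannian metric $h_D$, which is precisely the hypothesis of Theorem~\ref{G-orbits-transverse-tot-geod}. Applying that theorem yields the conclusion: a geodesic $\gamma$ of $(\widehat{D}, h_D)$ that is perpendicular to one $\mathbb{T}^n$-orbit at a single point meets every $\mathbb{T}^n$-orbit orthogonally. (Equivalently, one could invoke Theorem~\ref{torus-thm1} together with Theorem~\ref{transverse-tot-geod}, since $\Oa$ is a Riemannian foliation with compatible bundle-like metric $h_D$, and a geodesic starting in $T\Oa^\perp$ stays in $T\Oa^\perp$.)

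The only genuinely delicate point, and the one I would treat most carefully, is the notion of ``a geodesic in $\widehat{D}$''. Since $\widehat{D}$ is an open submanifold carrying the restricted metric, its geodesics are exactly the geodesics of $(D, h_D)$ that happen to remain in $\widehat{D}$; no completeness assumption on $\widehat{D}$ is needed, and the statement is understood to concern the intrinsic geodesics of $(\widehat{D}, h_D)$. With this clarification in place, the result is an immediate corollary, and I expect no substantial obstacle beyond the bookkeeping of matching the concrete data $(\widehat{D}, \mathbb{T}^n, h_D)$ against the abstract hypotheses of Theorem~\ref{G-orbits-transverse-tot-geod}.
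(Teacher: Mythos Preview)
Your proposal is correct and follows exactly the paper's approach: the paper presents Theorem~\ref{torus-thm2} without a separate proof, simply noting that it ``makes use of Theorem~\ref{G-orbits-transverse-tot-geod} as well'' after having established Theorem~\ref{torus-thm1}. Your careful verification of the hypotheses (invariance of $\widehat{D}$, freeness of the action, isometry of the $\T^n$-action) is more explicit than the paper itself, but the argument is the same.
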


    We now prove that the Riemannian foliation $\Oa$ obtained in the
    previous result is Lagrangian.

    \begin{theorem}\label{th:O-Lag-Riem}
    Let $D$ be as before, $\widehat{D}$ the subset of $D$ defined in
    Lemma \ref{Reinhardt-loc-free}, $ds^2_D$ the Bergman metric of
    $D$ as a
    Hermitian metric and $\Oa$ the Riemannian foliation of $\T^n$-orbits
    in $\widehat{D}$. Then $\Oa$ is Lagrangian with respect to the
    Riemannian metric $h_D = \re(ds^2_D)$,
    in other words, the $\T^n$-orbits in $\widehat{D}$ are Lagrangian
    with respect to $h_D$.
    \end{theorem}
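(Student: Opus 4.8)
The plan is to use the Kähler nature of $ds^2_D$: since the Bergman metric is Kähler, its imaginary part $\omega_D = \Ima(ds^2_D)$ is (up to a constant) the associated Kähler form, and a submanifold is Lagrangian precisely when it has real dimension $n$ and $\omega_D$ restricts to zero on its tangent spaces. On $\widehat{D}$ the action is free by Lemma \ref{Reinhardt-loc-free}, so each leaf of $\Oa$ is diffeomorphic to $\T^n$ and has real dimension $n$, exactly half of $\dim_\R D = 2n$. Thus the only thing to verify is the vanishing of $\omega_D$ along the leaves.

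First I would compute $\omega_D$ in the polar coordinates $z_k = r_k e^{i\theta_k}$, reusing the computation behind Theorem \ref{th:metric-Reinhardt}. Writing $dz_k = e^{i\theta_k}(dr_k + i r_k\, d\theta_k)$ and using that the coefficient $\partial^2 \log K_D/\partial z_k \partial \bar{z}_l$ carries a phase $e^{i(\theta_l - \theta_k)}$ that cancels against the phase of $dz^k \otimes d\bar{z}^l$, the Hermitian form takes the form
$$
ds^2_D = \sum_{k,l=1}^n F_{kl}(r)\,(dr_k + i r_k\, d\theta_k)\otimes(dr_l - i r_l\, d\theta_l),
$$
with $F_{kl}$ as in Theorem \ref{th:metric-Reinhardt}. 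Its real part reproduces $h_D$, while its imaginary part collects exactly the cross terms:
$$
\Ima(ds^2_D) = \sum_{k,l=1}^n F_{kl}(r)\,\bigl(r_k\, d\theta_k\otimes dr_l - r_l\, dr_k\otimes d\theta_l\bigr).
$$

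The key observation is that every term of $\Ima(ds^2_D)$ contains exactly one factor $dr_j$. Since the leaves of $\Oa$ are the $\T^n$-orbits, whose tangent spaces are spanned by the infinitesimal generators $\partial/\partial\theta_1,\dots,\partial/\partial\theta_n$ of the action, and since $dr_j(\partial/\partial\theta_a) = 0$ for all $j,a$, every term of $\Ima(ds^2_D)$ annihilates any pair of vectors tangent to an orbit. Hence $\omega_D$ restricts to zero on $T\Oa$, and together with the dimension count this shows that the leaves are Lagrangian.

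I expect the only genuine work to be the coordinate computation of $\Ima(ds^2_D)$, which is entirely parallel to the real-part computation already carried out for Theorem \ref{th:metric-Reinhardt}. The essential and clean point is that the phase factors $e^{i(\theta_k-\theta_l)}$ cancel, so that the imaginary part consists solely of $dr$--$d\theta$ mixed terms; these vanish upon restriction to the torus orbits, which lie entirely in the $\theta$-directions.
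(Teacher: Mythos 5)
Your proof is correct, and it establishes the theorem by a route that is equivalent to, but formally different from, the paper's. The paper takes ``Lagrangian with respect to $h_D$'' to mean that $T_z\Oa$ and $iT_z\Oa$ are $h_D$-perpendicular at every $z$; since this condition is $\T^n$-invariant, it reduces to points $z = x \in \R_+^n$, where $T_x\Oa = i\R^n$ and hence $iT_x\Oa = \R^n$, and then reads off the perpendicularity of $\R^n$ and $i\R^n$ directly from the real-part formula of Theorem \ref{th:metric-Reinhardt}, which has no $dr$--$d\theta$ cross terms. You instead verify the symplectic characterization --- half dimension plus vanishing of $\omega_D = \Ima(ds^2_D)$ on the leaves --- by pushing the computation behind Theorem \ref{th:metric-Reinhardt} one step further, to the full Hermitian form
$$
ds^2_D = \sum_{k,l=1}^n F_{kl}(r)\,(dr_k + i r_k\, d\theta_k)\otimes(dr_l - i r_l\, d\theta_l),
$$
whose imaginary part consists solely of $dr$--$d\theta$ mixed terms and therefore annihilates the $\theta$-directions spanning $T\Oa$; your phase-cancellation argument here is valid, since the factor $\bar z_k z_l/(r_kr_l)=e^{i(\theta_l-\theta_k)}$ in the coefficient exactly cancels the phase of $dz^k\otimes d\bar z^l$, and the diagonal term is unaffected because of the $\delta_{kl}$. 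The two characterizations agree pointwise because $\omega_D(u,v) = \pm h_D(iu,v)$ (up to normalization), so your statement and the paper's are the same. What the paper's route buys is economy: it reuses Theorem \ref{th:metric-Reinhardt} exactly as stated, with the standard invariance trick replacing any new computation. What your route buys is that it works at every point of $\widehat{D}$ without reduction to $\R_+^n$, and it makes the symplectic meaning of ``Lagrangian'' explicit, at the modest cost of redoing the coordinate computation while keeping the imaginary part instead of discarding it.
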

%{\sf Is it correct in the theorem the use of the metric $ds^2_D$?
%Don't we need to use $h_D = \re(ds^2_D)$?}
%correction
    \begin{proof}
    We need to prove that $T_z \Oa$ and $iT_z \Oa$ are perpendicular
    with respect to the Riemannian metric $h_D = \re(ds^2_D)$ at every
    $z\in \widehat{D}$. Since such condition is invariant under the
    $\T^n$-action we can assume that $z=x \in \R_+^n$.

    We observe that for every $x\in \widehat{D}$ we have $T_x\Oa =
    i\R^n$. Hence the result follows by using Theorem
    \ref{th:metric-Reinhardt} together with the fact that $i\R^n$ and
    $\R^n$ are perpendicular with respect to the elements $dr_k\otimes
    dr_l  + r_k r_l d\theta_k \otimes d\theta_l$ for every $k,l$.
    \end{proof}

    We now prove that the normal bundle to $\Oa$ is integrable.

    \begin{theorem}
    Let $D$ be as before, $\widehat{D}$ the subset of $D$ defined in
    Lemma \ref{Reinhardt-loc-free} and $h_D$ the Riemannian metric
    defined by the Bergman metric of $D$. If we denote with $T\Oa^\perp$
    the vector
    subbundle of $T\widehat{D}$ of tangent vectors perpendicular to
    $\Oa$, then $T\Oa^\perp$ is integrable to a foliation $\Pa$.
    Furthermore, $\Pa$ is a Lagrangian totally geodesic foliation of
    $\widehat{D}$.
    \end{theorem}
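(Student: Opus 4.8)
The plan is to prove the three assertions — integrability of $T\Oa^\perp$, and that the resulting foliation $\Pa$ is Lagrangian and totally geodesic — by first identifying $T\Oa^\perp$ explicitly in the polar coordinates $z_k = r_k e^{i\theta_k}$ and then invoking the foliation machinery of the previous sections.

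First I would pin down the distribution $T\Oa^\perp$. Since the $\T^n$-action rotates the angular variables, $T\Oa$ is globally framed on $\widehat{D}$ by the angular fields $\partial/\partial\theta_1,\dots,\partial/\partial\theta_n$. By Theorem \ref{th:metric-Reinhardt} the metric $h_D$ has no mixed $dr_k\otimes d\theta_l$ terms, so $h_D(\partial/\partial r_k,\partial/\partial\theta_l)=0$ for all $k,l$; since the $n$ radial fields span a subspace complementary to $T\Oa$, I conclude that $T\Oa^\perp=\mathrm{span}(\partial/\partial r_1,\dots,\partial/\partial r_n)$ on all of $\widehat{D}$. Integrability is then immediate, as the coordinate fields commute: $[\partial/\partial r_k,\partial/\partial r_l]=0$, so $T\Oa^\perp$ is involutive and Frobenius' theorem produces the foliation $\Pa$, whose leaves are the slices on which the angular variables are constant.

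For the Lagrangian property I would argue exactly as in the proof of Theorem \ref{th:O-Lag-Riem}. The condition $h_D(T_z\Pa, iT_z\Pa)=0$ is invariant under the $\T^n$-action, so it suffices to verify it at a real point $x\in\R_+^n$, where $T_x\Pa=\R^n$ and hence $iT_x\Pa=i\R^n=T_x\Oa$; the orthogonality of $\R^n$ and $i\R^n$ relative to the building blocks $dr_k\otimes dr_l+r_k r_l\,d\theta_k\otimes d\theta_l$ then gives the claim. Equivalently, since $h_D$ comes from a K\"ahler metric and $\Oa$ is Lagrangian, its orthogonal complement $T\Pa=T\Oa^\perp=i\,T\Oa$ is automatically Lagrangian as well.

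For the totally geodesic property I would use the transverse geometry already established. By Theorem \ref{torus-thm1}, $\Oa$ is a Riemannian foliation for which $h_D$ is a compatible bundle-like metric, so Theorem \ref{transverse-tot-geod} applies: any geodesic whose velocity lies in $T\Oa^\perp=T\Pa$ at one instant keeps its velocity in $T\Oa^\perp=T\Pa$ for all time. Since $\Pa$ integrates $T\Pa$, such a geodesic stays within a single leaf of $\Pa$, which is precisely the statement that $\Pa$ is totally geodesic, as in the remarks following Theorem \ref{transverse-tot-geod}. I expect no serious obstacle here: Theorem \ref{th:metric-Reinhardt} does the real work by exhibiting the decoupling of the radial and angular directions, and the only point requiring care is the clean identification of $T\Oa^\perp$ with the commuting radial frame; once that is in hand, integrability, the Lagrangian property, and total geodesy all follow from the earlier results.
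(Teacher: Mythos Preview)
Your proof is correct and follows essentially the same approach as the paper: identify $T\Oa^\perp$ with the radial directions via the block form of $h_D$ from Theorem~\ref{th:metric-Reinhardt}, deduce integrability, obtain the Lagrangian property from $T\Pa = iT\Oa$, and conclude total geodesy from the Riemannian foliation machinery. The only cosmetic difference is that the paper shows integrability by exhibiting the explicit integral submanifolds $M_t = t(D\cap\R_+^n)$ via the $\T^n$-action, whereas you invoke Frobenius for the commuting coordinate fields $\partial/\partial r_k$; the resulting leaves are of course the same.
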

    \begin{proof}
    If we let $M_0 = D\cap \R_+^n$, then by the proof of Theorem
    \ref{th:O-Lag-Riem} the tangent bundle to $M_0$ coincides with
    $T\Oa^\perp$ restricted to $M_0$, and so $M_0$ is an integral
    submanifold of $T\Oa^\perp$. Since $T\Oa^\perp$ is invariant under
    the $\T^n$-action and such action preserves the metric, it follows
    that for every $t\in \T^n$ the manifold:
    $$
            M_t = tM_0
    $$
    is an integral submanifold of $T\Oa^\perp$, thus showing the
    integrability of such bundle to some foliation $\Pa$.

    By Lemma \ref{le:Bergman-Reinhardt} we have $T\Pa = T\Oa^\perp =
    iT\Oa$ which implies that $\Pa$ is Lagrangian. Finally $\Pa$ is
    totally geodesic by Theorem \ref{torus-thm2}.
    \end{proof}

We now state the following easy corollary of the previous
discussion.

\begin{corollary}\label{cor:framing}
The sets of vector fields:
$$
    \left(\frac{\partial}{\partial \theta_k}\right)_{k=1}^n \quad
    \mbox{ and } \quad
    \left(\frac{\partial}{\partial r_k}\right)_{k=1}^n,
$$
define global framings for the bundles $T\Oa$ and $T\Oa^\perp =
T\Pa$, respectively, on $\widehat{D}$.
\end{corollary}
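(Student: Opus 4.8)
The plan is to identify both bundles explicitly in the polar coordinates $z_k = r_k e^{i\theta_k}$ and then read off the framings directly. Two facts do all the work: the $\T^n$-action is nothing but angular translation, so that $T\Oa$ is spanned by the angular coordinate fields; and the Bergman metric computed in Theorem \ref{th:metric-Reinhardt} has no mixed $dr$--$d\theta$ terms, so that the radial coordinate fields are automatically orthogonal to $T\Oa$. First I would record the relevant product structure: on $\widehat{D}$ the assignment $z \mapsto (t,r)$ with $t_k = z_k/|z_k| = e^{i\theta_k}$ and $r_k = |z_k| > 0$ is a diffeomorphism onto $\T^n \times \tau^0$, where $\tau^0$ is the open subset of $\tau(D)$ with all $r_k > 0$; this is exactly the decomposition already used in Section \ref{se:Reinh-Berg}. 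Under it the fields $\frac{\partial}{\partial\theta_k}$ and $\frac{\partial}{\partial r_k}$ are the coordinate fields of the two factors, hence everywhere defined and pointwise independent on $\widehat{D}$.

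Next I would frame $T\Oa$. The action $(t,z)\mapsto tz$, written with $t_k = e^{i\phi_k}$, sends $\theta_k \mapsto \theta_k + \phi_k$ and fixes $r$, so the fundamental vector field of the $k$-th circle factor of $\T^n$ is precisely $\frac{\partial}{\partial\theta_k}$. By the description of $T\Oa$ as the span of the fundamental vector fields of the action (see the proof of Proposition \ref{G-orbits-fol}), the $n$ fields $\left(\frac{\partial}{\partial\theta_k}\right)_{k=1}^n$ form a global framing of $T\Oa$. For $T\Oa^\perp$ I would invoke Theorem \ref{th:metric-Reinhardt}, which gives
$$
    h_D = \sum_{k,l=1}^n F_{kl}(r)\left(dr_k\otimes dr_l + r_k r_l\, d\theta_k \otimes d\theta_l\right),
$$
containing no cross term $dr_k \otimes d\theta_l$. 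Hence $h_D\!\left(\frac{\partial}{\partial r_k}, \frac{\partial}{\partial\theta_l}\right) = 0$ for all $k,l$, so each $\frac{\partial}{\partial r_k}$ lies in $T\Oa^\perp$. As $\widehat{D}$ has real dimension $2n$ and $T\Oa$ has rank $n$, the bundle $T\Oa^\perp$ has rank $n$, and the $n$ pointwise independent radial fields therefore frame it; this equals $T\Pa$ by the preceding theorem. (Consistently, at a point $x\in\R_+^n$ one has $T_x\Oa = i\R^n$ spanned by the angular fields and the radial fields span $\R^n$, as already observed in the proof of Theorem \ref{th:O-Lag-Riem}.)

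Since all the analytic content is packaged in Theorem \ref{th:metric-Reinhardt}, there is no serious obstacle; the statement is genuinely a corollary. The single point deserving care is the \emph{global} nature of the framings, which is exactly what restricting to $\widehat{D}$ secures through the decomposition $\widehat{D} \cong \T^n \times \tau^0$: the angular fields $\frac{\partial}{\partial\theta_k}$ are globally well defined even though the $\theta_k$ themselves are multivalued, because they are the (globally defined) fundamental fields of the torus action, while the radial fields $\frac{\partial}{\partial r_k}$ are defined wherever $r_k > 0$, i.e. on all of $\widehat{D}$.
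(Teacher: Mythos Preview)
Your proposal is correct and matches the paper's intent: the paper offers no separate proof here, simply calling the statement an ``easy corollary of the previous discussion,'' and your argument is exactly that discussion made explicit --- the angular fields are the fundamental vector fields of the $\T^n$-action (hence frame $T\Oa$), and Theorem~\ref{th:metric-Reinhardt} shows the radial fields are $h_D$-orthogonal to them (hence frame $T\Oa^\perp$).
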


\section{The unit ball}
An important class of domains in complex analysis is given by those
which are bounded and symmetric. The next result shows that each
irreducible bounded symmetric domain which is also Reinhardt has to
be a unit ball. As usual, we will denote by $\mathbb{B}^n$ the unit
ball in $\mathbb{C}^n$.

\begin{theorem}\label{bdd-Reinhardt}
Let $D$ be an irreducible bounded symmetric domain. Then $D$ is also
a Reinhardt domain if and only if $D = \mathbb{B}^n$ for some $n \in
\Z_+$.
\end{theorem}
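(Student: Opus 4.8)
The plan is to prove the two implications separately, the content lying entirely in the ``only if'' part, which I would approach through the linear isotropy representation at the center of the domain.

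The ``if'' direction is immediate: the ball $\B^n=\{z:\ |z_1|^2+\cdots+|z_n|^2<1\}$ is invariant under the action $(t_1,\dots,t_n)\cdot z=(t_1z_1,\dots,t_nz_n)$ with $|t_k|=1$, so it is a convex (hence logarithmically convex), complete Reinhardt domain centered at the origin, and it is a standard irreducible bounded symmetric domain, of rank one. For the converse, let $D\subset\C^N$ be irreducible bounded symmetric and Reinhardt, with $N=\dim_\C D$ and $0\in D$. First I would note that the torus $\T^N$, acting by $z\mapsto tz$, consists of biholomorphisms of $D$ fixing the origin, so it lies in the isotropy subgroup $K$ of $0$ in $G=\mathrm{Aut}(D)$, which is a maximal compact subgroup since $G$ acts transitively with compact isotropy. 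Differentiating the $\T^N$-action at $0$ shows that $\T^N$ acts on the holomorphic tangent space $T_0D\cong\C^N$ diagonally, that is, with the $N$ standard characters $e_1^*,\dots,e_N^*$, each of multiplicity one; and since $D$ is irreducible, the isotropy representation of $K$ on $T_0D\cong\C^N$ is irreducible.

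The heart of the argument is then a weight count. Fix a maximal torus $T$ of $K$ containing $\T^N$, and let $\lambda_1,\dots,\lambda_N$ be the weights of the $K$-module $\C^N$. Their restrictions to $\T^N$ are exactly $e_1^*,\dots,e_N^*$, which form a basis of $(\mathrm{Lie}\,\T^N)^*$; hence the $\lambda_i$ are themselves linearly independent in $\mathfrak{t}^*$, and therefore $\mathrm{rank}\,K=\dim T\ge N=\dim_\C D$. Invariantly, the noncompact positive roots of $G$---which are precisely the weights of the isotropy representation on $\mathfrak{p}^+=T_0D$ and which number $\dim_\C D$---must be linearly independent.

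It then remains to deduce that $\mathrm{rank}\,K\ge\dim_\C D$ forces $D$ to have rank one as a symmetric space, so that $D=\B^n$; this is the step I expect to be the main obstacle. I would carry it out in one of two ways. The quickest is to invoke Cartan's classification of the irreducible bounded symmetric domains and to check, type by type, that the inequality $\mathrm{rank}\,K\ge\dim_\C D$ holds only for $I_{1,n}=\B^n$ together with its low-dimensional coincidences (for instance $II_3\cong\B^3$), every remaining type satisfying the strict reverse inequality $\dim_\C D>\mathrm{rank}\,K$. A classification-free alternative rests on the structure of the noncompact roots: if the symmetric-space rank of $D$ is at least $2$, pick two strongly orthogonal noncompact positive roots $\gamma_1,\gamma_2$ in the sense of Harish-Chandra; then $\frac{1}{2}(\gamma_1+\gamma_2)$ is again a noncompact positive root, giving three distinct such roots bound by the relation $\gamma_1+\gamma_2-2\cdot\frac{1}{2}(\gamma_1+\gamma_2)=0$, in contradiction with the linear independence established above. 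Hence the rank is one and $D$ is a ball, and either route finishes the proof.
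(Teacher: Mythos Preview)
Your derivation of the inequality $\mathrm{rank}\,K\ge\dim_\C D$ from the Reinhardt hypothesis is correct and is exactly the mechanism behind the paper's proof (the paper simply declares the inequality ``clearly necessary'' without writing out the weight argument you supply). Completing the proof via your route (a), the type-by-type check against Cartan's list, is precisely what the paper does.

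Your classification-free route (b), however, has a genuine gap. The assertion that for strongly orthogonal noncompact positive roots $\gamma_1,\gamma_2$ the vector $\tfrac{1}{2}(\gamma_1+\gamma_2)$ is again a noncompact positive root is false in general. In $\mathfrak{su}(2,2)$ the noncompact positive roots are $e_i-e_j$ with $i\in\{1,2\}$, $j\in\{3,4\}$; taking the strongly orthogonal pair $\gamma_1=e_1-e_4$, $\gamma_2=e_2-e_3$, one finds $\tfrac{1}{2}(\gamma_1+\gamma_2)=\tfrac{1}{2}(e_1+e_2-e_3-e_4)$, which is not a root at all. What \emph{is} true (Moore's theorem) is that every noncompact positive root, when restricted to the span of the $H_{\gamma_i}$, equals one of $\gamma_i$, $\tfrac{1}{2}\gamma_i$, or $\tfrac{1}{2}(\gamma_i\pm\gamma_j)$, and in rank $\ge 2$ the restricted root $\tfrac{1}{2}(\gamma_1+\gamma_2)$ has positive multiplicity; from this one can manufacture a linear dependence among noncompact roots (in the example above, $(e_1-e_3)+(e_2-e_4)=\gamma_1+\gamma_2$). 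But that repair needs the restricted-root machinery, not the single sentence you wrote. As the proposal stands, only route (a) is complete, and it coincides with the paper's argument.
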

\begin{proof}
First, the unit ball centered at the origin in a complex vector
space is obviously a Reinhardt domain. Conversely, let us assume
that $D$ is an irreducible bounded symmetric domain which is also
Reinhardt. We show that it is a unit ball centered at the origin of
some complex vector space. For this we use Cartan's classification
of irreducible bounded symmetric domains and the description of
their biholomorphisms as found in \cite{Helgason01}. We present the
needed basic properties in Table~\ref{table-bdd}, which recollects
some of the information found in Table V in page 518 from
\cite{Helgason01}. Every irreducible bounded symmetric domain $D$ in
Table~\ref{table-bdd} is identified by its type in the first column
(following the notation from \cite{Helgason01}) and is explicitly
given as the quotient $G_0/K$ for the groups in the second and third
column. The group $G_0$ is, up to a finite covering, the group of
biholomorphisms of $D$ and $K$ is the subgroup of $G_0$ consisting
of those transformations that fix the origin. For the exceptional
bounded symmetric domains of type \textbf{EIII} and \textbf{EVII} we
write down the Lie algebras of the corresponding groups, which is
enough for our purposes; again, we follow here the notation from
\cite{Helgason01} to identify real forms of exceptional complex Lie
algebras. The last two columns permit us to compare the complex
dimension of $D$ and the dimension of a maximal torus $T$ in $K$.
This last dimension is well known from the basic properties of the
compact groups $K$ that appear in Table~\ref{table-bdd}. We recall
from the basic theory of symmetric spaces that the universal
covering of the group $G_0$ completely determines the bounded
symmetric domain: in other words, two bounded symmetric domains
whose corresponding groups $G_0$ in Table~\ref{table-bdd} have the
same universal covering group are biholomorphic. Through out
Table~\ref{table-bdd}, the symbols $p$, $q$ and $n$ are assumed to
be positive integers. The additional conditions on the types
\textbf{BDI(2,q)} and \textbf{DIII} are required for the
corresponding quotient $G_0/K$ to actually define an irreducible
bounded symmetric domain.

\begin{table}[ht]
\begin{center}
\renewcommand{\arraystretch}{1.3}
\caption{Irreducible bounded symmetric domains}\label{table-bdd}
\begin{tabular}{ l l l l l }
  \hline
  % after \\: \hline or \cline{col1-col2} \cline{col3-col4} ...
  $D$               & $G_0$                     & $K$                   & $\dim_\mathbb{C}(D)$  &  $\dim(T)$ \\ \hline
  \textbf{AIII}     & $SU(p,q)$                 & $S(U(p)\times U(q))$  &  $pq$  &  $p+q-1$                 \\
  \textbf{BDI(2,q)} ($q \neq 2$) & $SO_0(2,q)$               & $SO(2)\times SO(q)$   &  $q$   &  $\left[ \frac{q}{2} \right] + 1$  \\
  \textbf{DIII} ($n\geq 2$)     & $SO^*(2n)$                & $U(n)$                &  $\frac{n(n-1)}{2}$   &  $n$      \\
  \textbf{CI}       & $Sp(n,\mathbb{R})$        & $U(n)$                &  $\frac{n(n+1)}{2}$   &  $n$      \\
  \textbf{EIII}     & $\mathfrak{e}_{6(-14)}$   & $\mathfrak{so}(10)\oplus\mathbb{R}$  & $16$   &  $6$      \\
  \textbf{EVII}     & $\mathfrak{e}_{7(-25)}$   & $\mathfrak{e}_6\oplus\mathbb{R}$  & $27$      &  $7$      \\
  \hline
\end{tabular}
\end{center}
\end{table}

For $D$ in Table~\ref{table-bdd} to be a Reinhardt domain, we
clearly have as a necessary condition the inequality:
\begin{equation}\label{Reinhardt-ineq}
\dim(T) \geq \dim_\mathbb{C}(D).
\end{equation}
Let us now consider the cases where this might occur in
Table~\ref{table-bdd}.

\begin{description}
    \item[AIII] The condition (\ref{Reinhardt-ineq}) holds if and
    only if $\min(p,q) = 1$, which clearly corresponds to the unit ball of
    dimension $\max(p,q)$.

    \item[BDI(2,q)] In this case the condition (\ref{Reinhardt-ineq}) holds
    %correction
    if and only if $q=1$. This corresponds to the bounded symmetric
    domain whose group of biholomorphisms is, up to a finite
    covering, $SO_0(2,1)$. Since the Lie algebras
    $\mathfrak{so}(2,1)$ and $\mathfrak{su}(1,1)$ are isomorphic,
    the bounded symmetric domain of type \textbf{BDI(2,1)} is the unit
    disc in the complex plane.

    \item[DIII] In this case the condition (\ref{Reinhardt-ineq}) holds
    %correction
    only for $n = 2$ or $3$. The Lie algebras of the corresponding
    groups $G_0$ are $\mathfrak{so}^*(4)$ and $\mathfrak{so}^*(6)$.
    There are well known isomorphisms $\mathfrak{so}^*(4) \cong
    \mathfrak{su}(2) \times \mathfrak{su}(1,1)$ and
    $\mathfrak{so}^*(6) \cong \mathfrak{su}(3,1)$ (see
    \cite{Helgason01}). We also recall that $\mathfrak{u}(n) \cong
    \mathfrak{su}(n) \oplus \mathbb{R}$, for every $n$. Hence, we
    conclude that type \textbf{DIII} for $n = 2$ and $3$ defines
    the unit disc in the complex plane and the unit ball in
    $\mathbb{C}^3$, respectively.

    \item[CI] In this case the condition (\ref{Reinhardt-ineq}) holds only for $n=1$, which
    yields the unit disk with an argument as above using the fact
    that $\mathfrak{sp}(1,\R)$ is isomorphic to
    $\mathfrak{su}(1,1)$ (see \cite{Helgason01}).

    \item[EIII, EVII] A simple inspection shows
    that in these cases the condition (\ref{Reinhardt-ineq}) cannot
    hold.
\end{description}
This completes the proof of Theorem \ref{bdd-Reinhardt}.
\end{proof}

Now the results of the previous sections lead directly to the
following statements:
%correction

%The results from the previous sections can now be applied to
%conclude to the $\T^n$-action on $\B^n$ to obtain the following
%properties:

\begin{enumerate}
  \item On the subset $\widehat{\B}^n$ the $\T^n$-action defines a
  Lagrangian foliation $\Oa$.
  \item The orthogonal complement $T\Oa^\perp$ is integrable in
  $\widehat{\B}^n$ to a foliation totally geodesic Lagrangian
  foliation $\Pa$.
  \item The pair of foliations $\Oa$ and $\Pa$ define the polar
  coordinates in $\widehat{\B}^n$, which in turn yields
  the commutative $C^*$- algebra of Toeplitz operators whose symbols are constant
  on the leaves of $\Oa$.
  %correction
\end{enumerate}

In what follows we will normalize the (Hermitian) Bergman metric on
the unit ball to the following expression:
$$
        ds^2 = \frac{4}{1-\sum_{k=1}^n |z_k|^2}
        \left(\sum_{k=1}^n dz^k\otimes d\overline{z}^k +
        \sum_{k,l=1}^n \frac{\overline{z}_k z_l \,dz^k\otimes
        d\overline{z}^l}{1-\sum_{k=1}^n |z_k|^2}\right).
$$
which differs from the usual Bergman metric as considered in the
proof of Theorem \ref{th:metric-Reinhardt} by a factor of $(n+1)/4$.
The advantage of this normalization is that the sectional curvature
varies in the interval $[-1,-1/4]$, while with the metric as defined
in the proof of Theorem \ref{th:metric-Reinhardt} the sectional
curvature varies in the interval $[-4/(n+1),-1/(n+1)]$.

We will now compute some values of the second fundamental form for
the foliation $\Oa$ of the unit ball. First, we recall the notion of
complex geodesic and some of its properties.

\begin{definition}
A complex geodesic in $\B^n$ is a biholomorphic map $\varphi : \D
\rightarrow \D'$ where $\D$ is the unit disc and $\D'=\B^n\cap L$
for some complex affine line $L$ in $\C^n$.
\end{definition}

%{\sf What is $\Delta$? It is not defined.}
%correction
It is well known that complex geodesics are always totally geodesic
maps. Furthermore, the images of complex geodesics are precisely the
closed totally geodesic complex submanifolds of (complex) dimension
$1$ in $\B^n$ (see \cite{Goldman99}).

The next result shows that some of the orbits of the $\T^n$-action
%correction
on the unit ball integrate the vector fields of the framing
$\left(\frac{\partial}{\partial\theta_k}\right)_{k=1}^n$ from
Corollary \ref{cor:framing}. Its proof is a straightforward
computation.

\begin{lemma}\label{le:geodesics-O}
For every $k,1=1,\dots,n$ with $k\neq l$, the curves:
\begin{eqnarray*}
  \gamma_{z,k}(s) &=& (z_1,\dots,z_{k-1},e^{is}z_k,z_{k+1},\dots,z_n) \\
  \gamma_{z,kl}(s) &=& (z_1,\dots,z_{k-1},e^{is}z_k,z_{k+1},\dots,z_{l-1},e^{is}z_l,z_{l+1},\dots,z_n)
\end{eqnarray*}
are integral curves of the vector fields
$\frac{\partial}{\partial\theta_k}$ and
$\frac{\partial}{\partial\theta_k} +
\frac{\partial}{\partial\theta_l}$, respectively.
\end{lemma}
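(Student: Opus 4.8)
The plan is to work entirely in the polar coordinates $z_k = r_k e^{i\theta_k}$, $k=1,\dots,n$, introduced before Theorem \ref{th:metric-Reinhardt}, in which the vector fields $\partial/\partial\theta_k$ from Corollary \ref{cor:framing} are, by definition, coordinate vector fields of the chart $(r_1,\dots,r_n,\theta_1,\dots,\theta_n)$ on $\widehat{D}$. The essential observation is that each of the curves in question is simply a coordinate line of this chart. Writing the $k$-th coordinate as $z_k = r_k e^{i\theta_k}$, the curve $\gamma_{z,k}$ replaces it by $e^{is}z_k = r_k e^{i(\theta_k + s)}$ while leaving all other coordinates unchanged; in polar coordinates this means that every $r_j$ is constant, every $\theta_j$ with $j\neq k$ is constant, and $\theta_k$ advances as $\theta_k(s) = \theta_k(0) + s$. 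Since $\gamma_{z,k}$ is thus the coordinate line along which only $\theta_k$ varies, and does so at unit speed, its velocity is precisely the coordinate vector field $\partial/\partial\theta_k$ evaluated along the curve.

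For a reader who prefers the ambient Cartesian picture, I would also verify this directly. Differentiating gives $\frac{d}{ds}(e^{is}z_k) = i e^{is}z_k$; writing the $k$-th coordinate of $\gamma_{z,k}(s)$ as $u + iv$ and using $i(u+iv) = -v + iu$, the real velocity is $-v\,\partial/\partial x_k + u\,\partial/\partial y_k$. On the other hand, from $x_k = r_k\cos\theta_k$ and $y_k = r_k\sin\theta_k$ one computes $\partial/\partial\theta_k = -y_k\,\partial/\partial x_k + x_k\,\partial/\partial y_k$, and the two expressions agree at the point $\gamma_{z,k}(s)$, where $x_k = u$ and $y_k = v$. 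This confirms the first claim.

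For the second family $\gamma_{z,kl}$, the same reasoning applies, except that now both the $k$-th and the $l$-th coordinates are multiplied by $e^{is}$; hence in polar coordinates both $\theta_k$ and $\theta_l$ advance at unit rate while all other coordinates stay fixed. By linearity of the differentiation the velocity along the curve is then the sum $\partial/\partial\theta_k + \partial/\partial\theta_l$, as asserted. Since the statement merely equates a curve's velocity with a coordinate vector field, there is no genuine obstacle here: the only point requiring care is the chart bookkeeping, namely confirming that multiplication of $z_k$ by $e^{is}$ corresponds exactly to advancing $\theta_k$ by $s$ at unit speed with $r_k$ held fixed, which is immediate from the polar form. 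This is precisely the straightforward computation alluded to in the statement.
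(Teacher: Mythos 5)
Your proof is correct and follows essentially the same route as the paper: both arguments rest on the observation that, by the very definition of the polar coordinates $z_k = r_k e^{i\theta_k}$, multiplying $z_k$ (and $z_l$) by $e^{is}$ advances $\theta_k$ (and $\theta_l$) at unit speed with all other coordinates fixed, so the given curves are exactly the flow lines of $\frac{\partial}{\partial\theta_k}$ and $\frac{\partial}{\partial\theta_k}+\frac{\partial}{\partial\theta_l}$. Your additional Cartesian verification is a fuller write-up of the ``straightforward computation'' the paper leaves implicit, not a different method.
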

\begin{proof}
By the definition of polar coordinates it is clear that the flows
that integrate $\frac{\partial}{\partial\theta_k}$ and
$\frac{\partial}{\partial\theta_k} +
\frac{\partial}{\partial\theta_l}$ are given by:
\begin{eqnarray*}
    z &\mapsto& (z_1,\dots,z_{k-1},e^{is}z_k,z_{k+1},\dots,z_n), \\
    z &\mapsto& (z_1,\dots,z_{k-1},e^{is}z_k,z_{k+1},\dots,z_{l-1},e^{is}z_l,z_{l+1},\dots,z_n)
\end{eqnarray*}
from which the conclusion is clear.
\end{proof}

Let us define the following vector fields on $\widehat{\B}^n$:
\begin{eqnarray*}
  Q_k &=& \II\left(\frac{\partial}{\partial\theta_k},\frac{\partial}{\partial\theta_k}\right) \\
  Q_{kl} &=& \II\left(\frac{\partial}{\partial\theta_k} +
\frac{\partial}{\partial\theta_l},\frac{\partial}{\partial\theta_k}
+ \frac{\partial}{\partial\theta_l}\right),
\end{eqnarray*}
then, by Proposition \ref{prop:II-quadratic} and Corollary
\ref{cor:framing}, such vector fields completely determine the
second fundamental form $\II$. We will compute $Q_k$ and $Q_{kl}$
using the curves defined in Lemma \ref{le:geodesics-O}. To achieve
this, for every $z\in \widehat{\B}^n$ we define the following
complex geodesics:
\begin{eqnarray*}
  \phi_{z,k}(w) &=& (z_1,\dots,z_{k-1},R_k w,z_{k+1},\dots,z_n) \\
  \phi_{z,kl}(w) &=& (z_1,\dots,z_{k-1},R_{kl}w,z_{k+1},\dots,z_{l-1},\frac{R_{kl}z_l}{z_k}w,z_{l+1},\dots,z_n)
\end{eqnarray*}
where $k,l=1,\dots,n$ with $k\neq l$ and:
\begin{eqnarray*}
  R_k &=& \sqrt{1-\sum_{j\neq k} |z_j|^2} \\
  R_{kl} &=& \frac{|z_k|\sqrt{1-\sum_{j\neq k,l} |z_j|^2}}{\sqrt{|z_k|^2 +
  |z_l|^2}}.
\end{eqnarray*}
Then we have the following easy to prove result.

\begin{lemma}\label{le:complex-geodesics-curves}
For every $z\in \widehat{\B}^n$ the complex geodesics $\phi_{z,k}$,
$\phi_{z,kl}$ satisfy:
\begin{enumerate}
  \item $\phi_{z,k}(z_k/R_k) = \phi_{z,kl}(z_k/R_{kl}) = z$ for
  every $k,l = 1,\dots,n$ with $k\neq l$,
  \item $\gamma_{z,k}(\R)\subset\phi_{z,k}(\D)$ and
  $\gamma_{z,kl}(\R)\subset\phi_{z,kl}(\D)$,
\end{enumerate}
in other words, they pass through $z$ and contain the curves from
Lemma \ref{le:geodesics-O} with the same indices.
\end{lemma}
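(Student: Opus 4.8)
The plan is to prove both assertions by direct substitution into the explicit formulas defining $\phi_{z,k}$, $\phi_{z,kl}$, $\gamma_{z,k}$ and $\gamma_{z,kl}$; everything will reduce to elementary algebra together with the single defining inequality $\sum_{j=1}^n |z_j|^2 < 1$ for $z \in \widehat{\B}^n$. Before verifying the two listed properties, I would first confirm that $\phi_{z,k}$ and $\phi_{z,kl}$ genuinely deserve to be called complex geodesics, since only then do the subsequent statements carry their intended meaning.

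For part (1) I would simply evaluate. Substituting $w = z_k/R_k$ into $\phi_{z,k}$ replaces the $k$-th slot $R_k w$ by $z_k$ and leaves all other slots unchanged, giving $z$. For $\phi_{z,kl}$ at $w = z_k/R_{kl}$, the $k$-th slot $R_{kl}w$ becomes $z_k$, while the $l$-th slot $(R_{kl}z_l/z_k)w$ becomes $(R_{kl}z_l/z_k)(z_k/R_{kl}) = z_l$; here I use $z_k \neq 0$, which holds on $\widehat{\B}^n$. Both evaluations return $z$, establishing (1).

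For part (2) I would, for each $s \in \R$, exhibit the parameter value $w(s) \in \D$ with $\phi(w(s)) = \gamma(s)$. For $\phi_{z,k}$, matching the $k$-th coordinate forces $w(s) = e^{is}z_k/R_k$, and all other coordinates already agree; the membership $w(s) \in \D$ amounts to $|z_k| < R_k$, i.e. $|z_k|^2 < 1 - \sum_{j\neq k}|z_j|^2$, which is exactly $\sum_j |z_j|^2 < 1$. For $\phi_{z,kl}$, matching the $k$-th coordinate again gives $w(s) = e^{is}z_k/R_{kl}$; I would then check for consistency that this same $w(s)$ produces the correct $l$-th coordinate, namely $(R_{kl}z_l/z_k)(e^{is}z_k/R_{kl}) = e^{is}z_l$, as required. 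The condition $|w(s)| < 1$ simplifies, using the definition of $R_{kl}$, to $|z_k|^2 + |z_l|^2 < 1 - \sum_{j \neq k,l}|z_j|^2$, once more equivalent to $\sum_j |z_j|^2 < 1$.

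The only step carrying any real content is verifying that $\phi_{z,kl}$ is a complex geodesic, i.e. that it is a biholomorphism of $\D$ onto $\B^n \cap L$ for the complex affine line $L$ through $z$ in the directions of the $k$-th and $l$-th coordinates. Writing a generic point of $L$ as the image of the affine map $w \mapsto \phi_{z,kl}(w)$ and imposing the ball condition, the requirement becomes $(|R_{kl}|^2 + |R_{kl}z_l/z_k|^2)\,|w|^2 < 1 - \sum_{j\neq k,l}|z_j|^2$. The precise normalization of $R_{kl}$ is chosen exactly so that $|R_{kl}|^2 + |R_{kl}z_l/z_k|^2 = 1 - \sum_{j\neq k,l}|z_j|^2$, whence the ball condition collapses to $|w| < 1$; this identifies the image with $\B^n\cap L$ and shows the map is onto, while injectivity and holomorphy are immediate since the map is complex-affine and nonconstant. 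The analogous but easier verification for $\phi_{z,k}$ follows the same pattern with $R_k$ as the normalizing factor. This normalization identity is the one point deserving care; the remainder is bookkeeping.
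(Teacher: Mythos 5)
Your proposal is correct: the paper itself offers no proof of this lemma (it is stated as ``easy to prove''), and your direct substitution argument is exactly the intended verification, with both parts reducing, as you show, to the single inequality $\sum_j |z_j|^2 < 1$. Your additional check of the normalization identity $|R_{kl}|^2 + |R_{kl}z_l/z_k|^2 = 1 - \sum_{j\neq k,l}|z_j|^2$, which confirms that $\phi_{z,k}$ and $\phi_{z,kl}$ are genuine complex geodesics (affine biholomorphisms of $\D$ onto $\B^n\cap L$), is a worthwhile point that the paper leaves implicit in its definitions of $R_k$ and $R_{kl}$.
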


We now use the above to compute the value of the vector fields $Q_k$
and $Q_{kl}$.

\begin{lemma}\label{le:Q-values}
For every $z\in \widehat{\B}^n$ and $k,l=1,\dots,n$ with $k\neq l$
we have the following relations:
\begin{enumerate}
  \item $Q_k(z) = \gamma_{z,k}''(0)$ and $Q_{kl}(z) =
  \gamma_{z,kl}''(0)$, where the acceleration is computed for the
  complex hyperbolic geometry of $\B^n$,
  \item $\gamma_{z,k}''(s) \in \R i \gamma_{z,k}'(s)$ and
  $\gamma_{z,kl}''(s) \in \R i \gamma_{z,kl}'(s)$ for every $s\in
  \R$; in particular:
  $$
        Q_k(z) \in \R \frac{\partial}{\partial r_k}\Big|_z,
        \quad
        Q_{kl}(z) \in \R \left( \frac{\partial}{\partial r_k}\Big|_z
                + \frac{\partial}{\partial r_l}\Big|_z\right),
  $$
  \item the norms of $Q_k$ and $Q_{kl}$ are given by:
  $$
        \|Q_k(z)\| = C_k(z) \|\gamma_{z,k}'(0)\|^2,
        \quad
        \|Q_{kl}(z)\| = C_{kl}(z) \|\gamma_{z,kl}'(0)\|^2,
  $$
  where $C_k(z)$ and $C_{kl}(z)$ are the geodesic curvatures of
  $\gamma_{z,k}$ and $\gamma_{z,kl}$, respectively, considered as
  curves in the images of the complex geodesics $\phi_{z,k}$ and
  $\phi_{z,kl}$, respectively, endowed with the metric inherited from
  $\B^n$.
\end{enumerate}
\end{lemma}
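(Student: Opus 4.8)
The plan is to reduce every computation to the geometry of a single totally geodesic complex geodesic, inside which $\gamma_{z,k}$ and $\gamma_{z,kl}$ become concentric circles of a hyperbolic disc. Recall from the definition of the second fundamental form that for vector fields $X,Y$ tangent to $\Oa$ one has $\II(X,Y)=\Ha(\widehat{\nabla}_X Y)$, where $\Ha$ is the orthogonal projection onto $T\Oa^\perp$ and $\widehat{\nabla}$ is the Levi-Civita connection of $\B^n$. Taking $X=Y=\frac{\partial}{\partial\theta_k}$, which by Lemma \ref{le:geodesics-O} is the velocity field of $\gamma_{z,k}$, the field $\widehat{\nabla}_X X$ restricted to this integral curve is exactly the covariant acceleration $\gamma_{z,k}''$; hence $Q_k(z)=\Ha(\gamma_{z,k}''(0))$, and likewise $Q_{kl}(z)=\Ha(\gamma_{z,kl}''(0))$, using the corresponding framing field of Corollary \ref{cor:framing}. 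Thus the whole statement follows once these accelerations are understood, which is the content of item~2.

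For item~2 I would exploit the complex geodesics of Lemma \ref{le:complex-geodesics-curves}. Since $\phi_{z,k}$ is a complex geodesic its image is totally geodesic in $\B^n$, so the covariant acceleration of any curve lying in that image, computed in $\B^n$, agrees with the one computed inside the image with its induced metric. By Lemma \ref{le:complex-geodesics-curves} the curve $\gamma_{z,k}$ lies in $\phi_{z,k}(\D)$ and corresponds, under the holomorphic isometry $\phi_{z,k}$, to the concentric circle $w(s)=e^{is}z_k/R_k$ of the hyperbolic disc. The acceleration of a concentric circle in the rotationally symmetric hyperbolic metric is radial; equivalently, since $w'(s)=i\,w(s)$ one has $i\,w'(s)=-w(s)$, a real multiple of the radial direction, so $\gamma_{z,k}''(s)\in\R\,i\,\gamma_{z,k}'(s)$. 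Finally, writing $z_k=r_k e^{i\theta_k}$ gives $i\frac{\partial}{\partial\theta_k}=-r_k\frac{\partial}{\partial r_k}$, whence $\R\,i\,\gamma_{z,k}'(s)=\R\frac{\partial}{\partial r_k}$; the identical argument applied to $\phi_{z,kl}$ settles $\gamma_{z,kl}$. This proves item~2.

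Item~1 is then immediate: by Theorem \ref{th:metric-Reinhardt} the metric $h_D$ is block-diagonal in the $r$- and $\theta$-directions (no mixed $dr_k\otimes d\theta_l$ terms), so each $\frac{\partial}{\partial r_k}$ lies in $T\Oa^\perp$; by item~2 the acceleration already lies there and $\Ha$ acts as the identity on it, giving $Q_k(z)=\gamma_{z,k}''(0)$ and similarly for $Q_{kl}$. For item~3, the same block-diagonal structure shows $\gamma_{z,k}''(0)\in\R\frac{\partial}{\partial r_k}$ is orthogonal to the velocity $\gamma_{z,k}'(0)=\frac{\partial}{\partial\theta_k}$, so the acceleration inside the surface $\phi_{z,k}(\D)$ has no tangential part; the standard decomposition $\gamma''=\frac{d}{ds}\|\gamma'\|\,T+\kappa_g\|\gamma'\|^2 N$ then collapses to $\|\gamma_{z,k}''(0)\|=C_k(z)\,\|\gamma_{z,k}'(0)\|^2$, where $C_k(z)=\kappa_g$ is precisely the geodesic curvature of $\gamma_{z,k}$ in $\phi_{z,k}(\D)$, and the computation for $\gamma_{z,kl}$ is identical. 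The main obstacle is item~2, and it is exactly the reason the complex geodesics $\phi_{z,k},\phi_{z,kl}$ are introduced: they replace a cumbersome covariant derivative in the full Bergman metric of $\B^n$ by the transparent computation of the acceleration of a concentric circle in a hyperbolic disc.
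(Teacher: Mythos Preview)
Your proposal is correct and follows essentially the same route as the paper: reduce to the totally geodesic complex discs $\phi_{z,k}(\D)$, $\phi_{z,kl}(\D)$, recognize the curves as concentric circles there, and read off the direction and magnitude of the acceleration. The only noteworthy difference is the order in which (1) and (2) are obtained. The paper first argues that $\gamma_{z,k}$ and $\gamma_{z,kl}$ are geodesics \emph{in the leaf} (because on an abelian Lie group with a left-invariant metric the one-parameter subgroups are geodesics), so their ambient acceleration is automatically horizontal; this gives (1) directly, and (2) then follows from the complex-geodesic reduction. You instead prove (2) first and then use the block-diagonal form of $h_D$ from Theorem~\ref{th:metric-Reinhardt} to see that the direction you found is horizontal, yielding (1). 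Both arguments are valid; the paper's gives horizontality without first computing the explicit direction, while yours avoids invoking the Lie-group geodesic fact.
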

\begin{proof}
First we observe that in $\widehat{\B}^n$ the leaves of the
foliation $\Oa$ by $\T^n$-orbits are diffeomorphic to $\T^n$ under
the action map. In particular, with respect to such diffeomorphisms,
the metric of $\B^n$ restricted to any such $\T^n$-orbit is left
invariant. For such metrics on abelian Lie groups it is well known
that the geodesics are precisely the one parameter subgroups and
their translations (see \cite{Helgason01}). Since the curves
$\gamma_{z,k}$, $\gamma_{z,kl}$ correspond to one parameter groups
in $\T^n$ it follows that they define geodesics in the leaf of $\Oa$
through $z$. Then, by well known results on the geometry of
Riemannian submanifolds (see \cite{Oneill83}) it follows that the
accelerations $\gamma_{z,k}''$, $\gamma_{z,kl}''$ as computed in
$\B^n$ are everywhere perpendicular to the leaves of $\Oa$, in other
words they are everywhere horizontal.

By the remarks in Section \ref{se:Extrinsic-fol} and the definition
of $Q_k$ and $Q_{kl}$ we have:
\begin{eqnarray*}
  Q_k(z) &=& \Ha(\widehat{\nabla}_{\gamma_{z,k}'(0)}\gamma_{z,k}') = \Ha(\gamma_{z,k}''(0)) = \gamma_{z,k}''(0) \\
  Q_{kl}(z) &=&
  \Ha(\widehat{\nabla}_{\gamma_{z,kl}'(0)}\gamma_{z,kl}') = \Ha(\gamma_{z,kl}''(0)) = \gamma_{z,kl}''(0),
\end{eqnarray*}
for $\widehat{\nabla}$ the connection of $\B^n$, and where the last
identities follow from the remarks in the previous paragraph. This
proves (1).

Next observe that since the curves $\gamma_{z,k}$, $\gamma_{z,kl}$
are geodesics in some leaf of $\Oa$ it follows that they are up to a
constant parameterized by arc-length. On the other hand, by Lemma
\ref{le:complex-geodesics-curves} the curves $\gamma_{z,k}$,
$\gamma_{z,kl}$ lie in complex geodesics which, as we observed
before, define totally geodesic submanifolds of $\B^n$. In
particular, their accelerations $\gamma_{z,k}''$, $\gamma_{z,kl}''$
as computed in $\B^n$ are the same as computed in the (images) of
the complex geodesics that contain them. Complex geodesics are
isometric to the unit disk, and for the latter any curve $\gamma$
which is parameterized up to a constant by arc length satisfies
$\gamma''(s) \in \R i \gamma'(s)$. This implies the first part of
(2). For the second part of (2) it is enough to note that:
\begin{eqnarray*}
  i\gamma_{z,k}'(s) &=& i\frac{\partial}{\partial \theta_k}\Big|_{\gamma_{z,k}(s)} \in \R \frac{\partial}{\partial r_k}\Big|_{\gamma_{z,k}(s)} \\
  i\gamma_{z,kl}'(s) &=& i\left(\frac{\partial}{\partial \theta_k}\Big|_{\gamma_{z,kl}(s)}
    + \frac{\partial}{\partial \theta_l}\Big|_{\gamma_{z,kl}(s)}
    \right)
   \in \R \left(\frac{\partial}{\partial r_k}\Big|_{\gamma_{z,kl}(s)}
    + \frac{\partial}{\partial r_l}\Big|_{\gamma_{z,kl}(s)}
    \right),
\end{eqnarray*}
and so we obtain (2) by applying (1).

By the definition of the geodesic curvature (see
\cite{GrudQuirogaVasil05} and its references) we have:
\begin{eqnarray*}
  C_k(z) &=& \frac{\|\gamma_{z,k}''(0)\|}{\|\gamma_{z,k}'(0)\|^2} \\
  C_{kl}(z) &=& \frac{\|\gamma_{z,kl}''(0)\|}{\|\gamma_{z,kl}'(0)\|^2},
\end{eqnarray*}
where we have used the fact that the norm of vectors and the
acceleration of curves in a (image of a) complex geodesic in $\B^n$
computed in $\B^n$ or the complex geodesic yield the same result.
Given the above identities, (3) follows from (1).
\end{proof}

The next result computes specific values for the second fundamental
form of the foliation $\Oa$ for the unit ball. By Proposition
\ref{prop:II-quadratic} such values completely determine the second
fundamental form. We observe that in the first two parts of the
%correction
statement we obtain a very explicit expression for the second
fundamental form of the foliation $\Oa$. Note that by part (3) of
Lemma \ref{le:Q-values}, the geodesic curvatures $C_k$ and $C_{kl}$
correspond to the norms of the values of $Q_k$ and $Q_{kl}$,
respectively, normalized so that they only depend on the direction
of $\frac{\partial}{\partial\theta_k}$ and
$\frac{\partial}{\partial\theta_k} +
\frac{\partial}{\partial\theta_l}$, respectively. In view of this,
the last part of the statement allows us to understand the
%correction
asymptotic behavior of the curvature of the leaves of $\Oa$ as they
move towards the origin or the boundary of $\widehat{\B}^n$. We
observe as well that this result generalizes our geometric
%correction
description of the elliptic model case in the unit disk found in
\cite{GrudQuirogaVasil05}.

\begin{theorem}\label{th:ball-curv}
For every $z\in \widehat{\B}^n$, let $r = (r_1,\dots,r_n) =
(|z_1|,\dots,|z_n|)$, and consider the curves $\gamma_{z,k}$,
$\gamma_{z,kl}$ and the complex geodesics $\phi_{z,k}$,
$\phi_{z,kl}$ defined above. Then:
\begin{enumerate}
  \item The vector fields $Q_k$ and $Q_{kl}$ are given by:
  \begin{eqnarray*}
    Q_k(z) &=& -C_k(z) \left\|\frac{\partial}{\partial \theta_k}\Big|_z\right\|^2
                \left\|\frac{\partial}{\partial r_k} \Big|_z\right\|^{-1}
                \left(\frac{\partial}{\partial r_k} \Big|_z\right) \\
    Q_{kl}(z) &=& -C_{kl}(z) \left\|\frac{\partial}{\partial \theta_k}\Big|_z + \frac{\partial}{\partial \theta_l}\Big|_z
                \right\|^2
                \left\|\frac{\partial}{\partial r_k} \Big|_z
                    + \frac{\partial}{\partial r_l} \Big|_z \right\|^{-1}
                \left( \frac{\partial}{\partial r_k} \Big|_z +
                \frac{\partial}{\partial r_l} \Big|_z  \right)
  \end{eqnarray*}
  \item The geodesic curvatures $C_k(z)$ and $C_{kl}(z)$ at $z$ defined in
  Lemma \ref{le:Q-values} are given by:
  \begin{eqnarray*}
    C_k(z) &=& \frac{r_k^2 + \left(1-\sum_{j\neq k} r_j^2\right)}{2r_k\sqrt{1-\sum_{j\neq k} r_j^2}} \\
    C_{kl}(z) &=& \frac{r_k^2 + r_l^2 + \left(1-\sum_{j\neq k,l} r_j^2\right)}{2\sqrt{r_k^2+r_l^2}\sqrt{1-\sum_{j\neq k,l}
    r_j^2}},
  \end{eqnarray*}
  in particular, such geodesic curvatures lie in the interval
  $(1,+\infty)$ and achieve all values therein.
  \item The geodesic curvatures $C_k(z)$ and $C_{kl}(z)$ have the
  following asymptotic behavior:
  \begin{eqnarray*}
  C_k(z), C_{kl}(z) &\rightarrow& +\infty, \quad \mbox{ as } |z| \rightarrow 0,  \\
  C_k(z) &\rightarrow& 1, \quad
                      \mbox{ as } z \rightarrow u, \\
  C_{kl}(z) &\rightarrow& 1, \quad
                      \mbox{ as } z \rightarrow v.
  \end{eqnarray*}
  for any $u,v \in \partial\B^n$ such that $u_k \neq 0$ and $|v_k|^2
  + |v_l|^2 \neq 0$, respectively.
\end{enumerate}
\end{theorem}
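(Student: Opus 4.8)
The plan is to reduce all three parts to a single computation inside one complex geodesic, where the ambient metric restricts to the hyperbolic metric of constant curvature $-1$, and then to read everything off from the classical formula for the geodesic curvature of a concentric circle in the Poincar\'e disk. The engine is Lemma \ref{le:complex-geodesics-curves}, which places each curve $\gamma_{z,k}$ (resp. $\gamma_{z,kl}$) inside the totally geodesic image $\phi_{z,k}(\D)$ (resp. $\phi_{z,kl}(\D)$), together with the fact that complex geodesics are isometric to the unit disk.

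For part (1) I would start from Lemma \ref{le:Q-values}: its parts (2) and (3) already determine both the direction and the magnitude of $Q_k$ and $Q_{kl}$. Indeed part (3) gives $\|Q_k(z)\| = C_k(z)\|\gamma_{z,k}'(0)\|^2 = C_k(z)\|\frac{\partial}{\partial\theta_k}|_z\|^2$ (using Lemma \ref{le:geodesics-O} to identify $\gamma_{z,k}'(0) = \frac{\partial}{\partial\theta_k}|_z$), while part (2) forces $Q_k(z)$ to be a real multiple of $\frac{\partial}{\partial r_k}|_z$. The only remaining issue is the sign: since $\gamma_{z,k}$ is a concentric circle inside $\phi_{z,k}(\D)$ and its acceleration points toward the center of that disk, which corresponds under $\phi_{z,k}$ to the direction of decreasing $r_k$, the coefficient is negative. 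Dividing by $\|\frac{\partial}{\partial r_k}|_z\|$ to turn the radial vector into the correct multiple then yields exactly the stated expression, and the identical argument handles $Q_{kl}$.

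The computational heart is part (2). Pulling the normalized ball metric back through $\phi_{z,k}$ — where only the $k$-th coordinate varies as $z_k\mapsto R_k w$ and $1-\sum|Z_j|^2 = R_k^2(1-|w|^2)$ — one checks by a direct substitution that the two terms of $ds^2$ collapse to $\frac{4}{(1-|w|^2)^2}\,dw\otimes d\bar w$; thus $\phi_{z,k}$ is an isometry onto the hyperbolic disk of curvature $-1$, and $\gamma_{z,k}$ pulls back to the concentric circle $w = e^{is}z_k/R_k$ of Euclidean radius $\rho = r_k/R_k$. The geodesic curvature of such a circle is $\frac{1+\rho^2}{2\rho}$ (equivalently $\coth$ of its hyperbolic radius), which I would verify through the conformal-metric identity $k_g = e^{-\varphi}(k_{\mathrm{eucl}} + \partial_n\varphi)$ with $e^{\varphi} = 2/(1-|w|^2)$. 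Substituting $\rho = r_k/R_k$ and $R_k = \sqrt{1-\sum_{j\neq k}r_j^2}$ and simplifying gives the formula for $C_k$; the same computation for $\phi_{z,kl}$, where the radius works out to $\rho = \sqrt{r_k^2+r_l^2}/\sqrt{1-\sum_{j\neq k,l}r_j^2}$, gives $C_{kl}$. The range claim then follows because $\rho$ ranges over $(0,1)$ as $z$ ranges over $\widehat{\B}^n$, and $\rho\mapsto\frac{1+\rho^2}{2\rho}$ is a strictly decreasing bijection of $(0,1)$ onto $(1,+\infty)$.

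Part (3) is then immediate from the explicit formulas: as $|z|\to 0$ all $r_j\to 0$ and the vanishing denominators force $C_k,C_{kl}\to+\infty$; as $z\to u\in\partial\B^n$ with $u_k\neq 0$ one has $\sum_j r_j^2\to 1$, so $1-\sum_{j\neq k}r_j^2\to r_k^2$ and $C_k\to\frac{2r_k^2}{2r_k^2}=1$, with the analogous limit for $C_{kl}$ at $v$. The main obstacle is the curvature computation in part (2): one must be careful to confirm that $\phi_{z,k}$ and $\phi_{z,kl}$ really pull the normalized metric back to the curvature $-1$ disk metric and to correctly identify the image curve as a concentric circle of the stated radius; once these are in place, the remainder is algebra.
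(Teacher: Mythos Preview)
Your proposal is correct and follows essentially the same approach as the paper: part (1) is deduced from parts (2) and (3) of Lemma~\ref{le:Q-values} together with the sign observation that the acceleration points toward the center; part (2) is obtained by pulling back to the Poincar\'e disk via $\phi_{z,k}$, $\phi_{z,kl}$, identifying the image curves as concentric circles of radii $r_k/R_k$ and $\sqrt{r_k^2+r_l^2}/\sqrt{1-\sum_{j\neq k,l}r_j^2}$, and applying the formula $C(s)=(1+s^2)/(2s)$; and part (3) is read off from the explicit expressions. The only cosmetic difference is that you verify the pullback metric and the circle-curvature formula directly, whereas the paper cites Goldman for both.
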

\begin{proof}
Up to a sign, (1) essentially follows from (2) and (3) in Lemma
\ref{le:Q-values}. The negative sign comes from the fact that, in
the proof of Lemma \ref{le:Q-values}, the accelerations of
$\gamma_{z,k}$, $\gamma_{z,kl}$ point towards the origin in the
complex geodesics that contain them and the vector fields
$$
    \frac{\partial}{\partial r_k}, \quad
    \frac{\partial}{\partial r_k} + \frac{\partial}{\partial r_l}
$$
point away from the origin.

To prove (2), let $\phi_{z,k}$, $\phi_{z,kl}$ be the complex
geodesics considered before. Then the inverse images of the curves
$\gamma_{z,k}$, $\gamma_{z,kl}$ with respect to such maps are easily
seen to be circles in $\D$ centered at the origin with Euclidean
radius:
$$
        s_k = \frac{r_k}{R_k} = \frac{r_k}{\sqrt{1 - \sum_{j\neq
        k}r_j^2}} \quad \mbox{and} \quad
        s_{kl} = \frac{r_k}{R_{kl}} = \frac{\sqrt{r_k^2 +
              r_l^2}}{\sqrt{1-\sum_{j\neq k,l} r_j^2}},
$$
respectively. Next, we observe that the geodesic curvature $C(s)$ of
the circle with Euclidean radius $s$ in the unit disk $\D$ with the
metric:
$$
          \frac{4(dx^2+dy^2)}{(1-(x^2+y^2))^2}
$$
is given by the formula:
$$
    C(s) = \frac{1+s^2}{2s}.
$$
This follows from two facts found in \cite{Goldman99}. The
hyperbolic radius $\rho$ of a circle centered at the origin
satisfies $\cosh^2(\rho/2) = 1/(1-s^2)$, where $s$ is the Euclidean
radius. And the geodesic curvature of such a circle is given by
$\coth(\rho)$. The first can be deduce from the expression for the
hyperbolic distance found in subsection 1.4.1 of \cite{Goldman99}
and the second is stated in subsection 1.4.2 of the same reference.

Given the above formula for $C(s)$ a simple substitution provides
the required expressions for $C_k$ and $C_{kl}$. Finally, (3) is a
consequence of these expressions.
\end{proof}

We recall that the velocity and acceleration of curves in a manifold
do not change when we renormalize the metric of the manifold by a
constant multiple. More generally, the second fundamental form of a
submanifold does not change either by such renormalizations (see
\cite{Oneill83}). However, the geodesic curvatures as defined above
involve the metric and so they are rescaled when we renormalize the
metric by a constant.

In particular, for the Bergman metric on $\B^n$ (i.e. without
normalizing to have sectional curvature in the interval $[-1,
-1/4]$) which is given by
$$
        ds^2_{\B^n} = \frac{n+1}{1-\sum_{k=1}^n |z_k|^2}
        \left(\sum_{k=1}^n dz^k\otimes d\overline{z}^k +
        \sum_{k,l=1}^n \frac{\overline{z}_k z_l \,dz^k\otimes
        d\overline{z}^l}{1-\sum_{k=1}^n |z_k|^2}\right).
$$
the tangent vectors $\gamma_{z,k}'(0)$, $\gamma_{z,k}''(0)$ and
$Q_k(z) =\II\left(\frac{\partial}{\partial\theta_k}\Big|_z,
\frac{\partial}{\partial\theta_k}\Big|_z\right)$ as defined above
have the same values, but computing the geodesic curvatures of
$\gamma_{z,k}$ involve applying a renormalized metric and the
corresponding values are rescaled. In the next result we write down
the geodesic curvatures of the curves $\gamma_{z,k}$ for the Bergman
metric and describe its asymptotic behavior. We also express such
curvatures in terms of the second fundamental form $\II$. These
facts will allow us to compare our present situation with a more
general asymptotic behavior discussed in the next section.

\begin{theorem}\label{th:ball-bergman-curv}
Let $h_{\B^n}$ be the Riemannian metric associated to the Bergman
metric of $\B^n$ given as above and denote with $\|\cdot\|_{\B^n}$
the norm that it defines on tangent vectors. Then, for every
$z\in\widehat{\B}^n$ the geodesic curvature $\widehat{C}_k(z)$ of
the curve $\gamma_{z,k}$ at $z$ for the metric $h_{\B^n}$ satisfies the relations:
\begin{enumerate}
\item $\widehat{C}_k(z) =
    -\left\|\frac{\partial}{\partial\theta_k}\Big|_z\right\|_{\B^n}^{-2}
     \left\|\frac{\partial}{\partial r_k} \Big|_z\right\|_{\B^n}^{-1}
     h_{\B^n}\left(Q_k(z),
                \frac{\partial}{\partial r_k} \Big|_z\right),
$
\item $\widehat{C}_k(z) = \frac{2}{\sqrt{n+1}}C_k(z)$, where $C_k$ is given as in Theorem \ref{th:ball-curv}.
\end{enumerate}
In particular, $\widehat{C}_k(z)$ is, up to a sign, the norm of the
orthogonal projection of the vector $\left\|\frac{\partial}{\partial
\theta_k}\Big|_z\right\|_{\B^n}^{-2} Q_k(z)$ onto
$\frac{\partial}{\partial r_k} \Big|_z$ with respect to $h_{\B^n}$.
And we also have:
$$
      \widehat{C}_k(z) \rightarrow \frac{2}{\sqrt{n+1}}, \quad
                      \mbox{ as } z \rightarrow u.
$$
for any $u \in \partial\B^n$ such that $u_k \neq 0$.
\end{theorem}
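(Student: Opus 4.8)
The plan is to reduce both identities to the single observation that the Bergman metric $ds^2_{\B^n}$ and the normalized metric $ds^2$ used in Theorem~\ref{th:ball-curv} differ only by the constant factor $(n+1)/4$, i.e. $ds^2_{\B^n}=\frac{n+1}{4}\,ds^2$. Since a constant rescaling of a Riemannian metric leaves the Levi-Civita connection (and hence every covariant derivative, acceleration, and second fundamental form) unchanged, the accelerations $\gamma_{z,k}''(0)$ and the vectors $Q_k(z)=\II(\frac{\partial}{\partial\theta_k},\frac{\partial}{\partial\theta_k})$ are literally the same objects whether computed for $ds^2$ or for $ds^2_{\B^n}$; only the norms of tangent vectors change, each by the factor $\sqrt{(n+1)/4}=\frac{\sqrt{n+1}}{2}$. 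I would prove part~(2) first, then establish part~(1) from the definition of geodesic curvature and the structure of $Q_k$, and finally read off the boundary asymptotics.

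For part~(2), I would start from the definition $\widehat{C}_k(z)=\|\gamma_{z,k}''(0)\|_{\B^n}\big/\|\gamma_{z,k}'(0)\|_{\B^n}^2$. The numerator vector $\gamma_{z,k}''(0)=Q_k(z)$ is unchanged by the rescaling, so its $\B^n$-norm differs from its $ds^2$-norm by the factor $\frac{\sqrt{n+1}}{2}$, while the denominator $\|\gamma_{z,k}'(0)\|_{\B^n}^2$ differs by the factor $(n+1)/4$. Combining these gives $\widehat{C}_k(z)=\frac{\sqrt{n+1}/2}{(n+1)/4}\,C_k(z)=\frac{2}{\sqrt{n+1}}\,C_k(z)$, where $C_k$ is the geodesic curvature for the normalized metric computed in Theorem~\ref{th:ball-curv}.

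For part~(1), I would invoke Lemma~\ref{le:Q-values}: the curve $\gamma_{z,k}$ has velocity $\frac{\partial}{\partial\theta_k}\big|_z$ and acceleration $\gamma_{z,k}''(0)=Q_k(z)$, and $Q_k(z)$ is a real multiple of $\frac{\partial}{\partial r_k}\big|_z$, the multiple being negative because the acceleration points toward the origin along the complex geodesic $\phi_{z,k}$ while $\frac{\partial}{\partial r_k}$ points away. Writing $Q_k(z)=c\,\frac{\partial}{\partial r_k}\big|_z$ with $c<0$, one has $h_{\B^n}(Q_k(z),\frac{\partial}{\partial r_k}\big|_z)=c\,\|\frac{\partial}{\partial r_k}\big|_z\|_{\B^n}^2$; substituting $\|Q_k(z)\|_{\B^n}=|c|\,\|\frac{\partial}{\partial r_k}\big|_z\|_{\B^n}=-c\,\|\frac{\partial}{\partial r_k}\big|_z\|_{\B^n}$ into $\widehat{C}_k(z)=\|Q_k(z)\|_{\B^n}\big/\|\frac{\partial}{\partial\theta_k}\big|_z\|_{\B^n}^2$ reproduces the stated formula, the overall minus sign absorbing that of $c$. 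The concluding interpretation is then immediate: as $Q_k(z)$ is already parallel to $\frac{\partial}{\partial r_k}\big|_z$, the orthogonal projection of $\|\frac{\partial}{\partial\theta_k}\big|_z\|_{\B^n}^{-2}Q_k(z)$ onto $\frac{\partial}{\partial r_k}\big|_z$ is that vector itself, and its norm is $\widehat{C}_k(z)$.

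The boundary limit is the last step: combining part~(2) with Theorem~\ref{th:ball-curv}(3), where $C_k(z)\to 1$ as $z\to u\in\partial\B^n$ with $u_k\neq 0$, gives $\widehat{C}_k(z)\to\frac{2}{\sqrt{n+1}}$. I expect the only delicate point to be the careful bookkeeping of which quantities are invariant under the constant renormalization (the connection, the vector $Q_k(z)$, and the second fundamental form) and which ones rescale (the norms of tangent vectors, and hence the geodesic curvature), together with correctly tracking the inward-pointing sign of $Q_k(z)$ that produces the minus sign in part~(1).
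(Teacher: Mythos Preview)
Your proposal is correct and follows essentially the same approach as the paper: the paper's proof of part~(1) likewise invokes the definition of geodesic curvature together with the fact that $Q_k(z)$ is a negative multiple of $\frac{\partial}{\partial r_k}\big|_z$ (which the paper cites from Theorem~\ref{th:ball-curv}(1) rather than Lemma~\ref{le:Q-values}, but the content is the same), and part~(2) is obtained from the single rescaling identity $h_{\B^n}=\frac{n+1}{4}h$ exactly as you do. The only cosmetic difference is that you prove part~(2) first and give more detail on the bookkeeping of which quantities rescale and which do not; the paper handles them in the stated order and more tersely.
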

\begin{proof}
The first relation follows from the definition of the geodesic
curvature as above applied to the new metric $h_{\B^n}$, the fact
that:
$$
    Q_k(z) = \II\left(\frac{\partial}{\partial\theta_k}\Big|_z,\frac{\partial}{\partial\theta_k}\Big|_z\right)
$$
and the corresponding relation of (1) in Theorem \ref{th:ball-curv}
for $\widehat{C}_k$.

The second relation is a consequence of the fact that $h_{\B^n} =
\frac{n+1}{4}h$ for $h$ the Riemannian metric on $\B^n$ rescaled so
that its sectional curvature lies in $[-1,-1/4]$.
\end{proof}

We specify now the results obtained in Sections \ref{se:Reinh-Berg}
and \ref{se:Reinh-Toepl} for the unit ball $\mathbb{B}^n$. The base
$\tau(\mathbb{B}^n)$  of  $\mathbb{B}^n$ has obviously the form
\begin{equation*}
\tau(\mathbb{B}^n) = \{r =(r_1,...,r_n): r^2=r_1^2+...+r_n^2 \in [0,1) \}.
\end{equation*}
As a custom in operator theory (see, for example, \cite{Zhu05}), introduce the family of weights
\begin{equation*}
\mu_{\lambda}(|z|)= c_{\lambda}\, (1-|z|^2)^{\lambda},
\end{equation*}
where the normalizing constant
\begin{equation*}
c_{\lambda} = \frac{\Gamma(n+\lambda+1)}{\pi^n \Gamma(\lambda+1)}
\end{equation*}
is chosen so that $\mu_{\lambda}(|z|)dv(z)$ is a probability measure in $\mathbb{B}^n$.

Introduce $L_2(\mathbb{B}^n, \mu_{\lambda})$ and its Bergman
subspace $\mathcal{A}^2_{\lambda}(\mathbb{B}^n) =
\mathcal{A}^2_{\mu_{\lambda}}(\mathbb{B}^n)$. It is well known (see,
for example, \cite{Zhu05}), that the Bergman projection
$B_{\lambda}$ of $L_2(\mathbb{B}^n, \mu_{\lambda})$ onto
$\mathcal{A}^2_{\lambda}(\mathbb{B}^n)$ has the form
\begin{equation*}
(B_\lambda \varphi)(z) = \int_{\mathbb{B}^n} \varphi(\zeta)\, K_{\lambda}(z, \zeta)\, \mu_{\lambda}(|\zeta|)dv(\zeta),
\end{equation*}
where the (weighted) Bergman kernel is given by
\begin{equation*}
K_{\lambda}(z, \zeta) = \frac{1}{(1-\sum_{k=1}^n z_k \overline{\zeta}_k)^{n+1+\lambda}}.
\end{equation*}

To calculate the constant $\alpha_p$, $p=(p_1,...,p_n) \in
\mathbb{Z}^n_+$, see (\ref{eq:alpha_p}), consider the integral
\begin{eqnarray*}
\int_{\mathbb{B}^n} |z^p|^2 \mu_{\lambda}(|z|) dv(z) &=&
\int_{\mathbb{B}^n} |z_1|^{2p_1}\cdot ... \cdot |z_n|^{2p_n}
\mu_{\lambda}(r) dv(z) \\
&=& \int_{\mathbb{T}^n} \prod_{k=1}^n \frac{dt_k}{it_k}
\int_{\tau(\mathbb{B}^n)} r_1^{2p_1} \cdot ... \cdot
r_n^{2p_n} \mu_{\lambda}(r) \prod_{k=1}^n r_kdr_k \\
&=& (2\pi)^n \alpha_p^{-2}.
\end{eqnarray*}
From the other hand side, by \cite{Zhu05}, Lemma 1.11, we have
\begin{equation*}
\int_{\mathbb{B}^n} |z^p|^2 \mu_{\lambda}(|z|) dv(z) =
\frac{p!\, \Gamma(n + \lambda + 1)}{\Gamma(n + |p| + \lambda + 1)},
\end{equation*}
that is,
\begin{equation*}
\alpha_p = \left(\frac{(2\pi)^n \, \Gamma(n + |p| + \lambda + 1)}{p! \, \Gamma(n + \lambda + 1)}\right)^{1/2}.
\end{equation*}

Now Theorem \ref{th:RT_aR*} for the case of the unit ball reads as follows.

\begin{theorem} \label{th:RT_aR*_ball}
Let $a=a(r)$ be a bounded measurable separately radial function. Then the
Toeplitz operator $T_a$ acting on $\Aa^2_{\lambda}(\mathbb{B}^n)$ is
unitary equivalent to the multiplication operator $\gamma_aI=R\,T_a
R^*$ acting on $l_2(\Z_+^n)$, where $R$ and $R^*$ are given by
(\ref{eq:unitR}) and (\ref{eq:unitR*}) respectively. The sequence
$\gamma_{a,\lambda}=\{\gamma_{a,\lambda}(p)\}_{p \in \Z_+^n}$ is
given by
\begin{eqnarray*} \label{eq:gamma_ball}
 \gamma_{a,\lambda}(p)&=& \frac{2^n \, \Gamma(n + |p| + \lambda + 1)}{p! \, \Gamma(\lambda + 1)} \int_{\tau(\mathbb{B}^n)} a(r)\,r^{2p}\, (1-r^2)^{\lambda}\, \prod_{k=1}^n r_kdr_k \\
&=& \frac{\Gamma(n + |p| + \lambda + 1)}{p! \, \Gamma(\lambda + 1)} \int_{\Delta(\mathbb{B}^n)} a(\sqrt{r})\,r^p\, (1-(r_1+...+r_n))^{\lambda}\, dr, \ \ \ \ \ \ p \in \Z_+^n,
\end{eqnarray*}
where $\Delta(\mathbb{B}^n)=\{r=(r_1,...,r_n):\, r_1+...+r_n \in
[0,1), \ r_k \geq 0, \ k=1,...,n\}$, $dr=dr_1...dr_n$, and
$\sqrt{r}=(\sqrt{r_1},...,\sqrt{r_n})$.
\end{theorem}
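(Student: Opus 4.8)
The plan is to apply Theorem \ref{th:RT_aR*} directly to the special case $D = \mathbb{B}^n$ with weight $\mu = \mu_\lambda$, inserting the explicit values of $\alpha_p$ and $\mu_\lambda$ that have just been computed. The general formula (\ref{eq:gamma}) reads
$$
    \gamma_a(p) = \alpha_p^2 \int_{\tau(\mathbb{B}^n)} a(r)\, r^{2p}\, \mu_\lambda(r)\, \prod_{k=1}^n r_k dr_k,
$$
so I would substitute $\alpha_p^2 = \frac{(2\pi)^n\,\Gamma(n+|p|+\lambda+1)}{p!\,\Gamma(n+\lambda+1)}$ together with $\mu_\lambda(r) = c_\lambda (1-r^2)^\lambda = \frac{\Gamma(n+\lambda+1)}{\pi^n\,\Gamma(\lambda+1)}(1-r^2)^\lambda$.

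The first displayed expression then follows by a routine simplification of the constants. The factor $\Gamma(n+\lambda+1)$ cancels between $\alpha_p^2$ and $c_\lambda$, while $(2\pi)^n/\pi^n = 2^n$, so that
$$
    \alpha_p^2\, c_\lambda = 2^n\, \frac{\Gamma(n+|p|+\lambda+1)}{p!\,\Gamma(\lambda+1)},
$$
which is precisely the coefficient in front of the integral over $\tau(\mathbb{B}^n)$ in the first form of $\gamma_{a,\lambda}(p)$.

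For the second expression I would carry out the coordinatewise change of variables $r_k \mapsto r_k^2$, i.e. replace each old radial coordinate by its square. This sends $r^{2p}$ to $r^p$, the quantity $r^2 = \sum_k r_k^2$ to $r_1 + \cdots + r_n$, and $a(r)$ to $a(\sqrt{r})$; the domain $\tau(\mathbb{B}^n) = \{r : \sum_k r_k^2 < 1,\ r_k \geq 0\}$ is carried onto the simplex $\Delta(\mathbb{B}^n)$. The Jacobian of this substitution yields $\prod_{k=1}^n r_k dr_k = 2^{-n}\, dr$, so the factor $2^n$ appearing in the first form cancels against $2^{-n}$, leaving the coefficient $\frac{\Gamma(n+|p|+\lambda+1)}{p!\,\Gamma(\lambda+1)}$ in the second form.

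There is essentially no obstacle here, since the statement is a direct specialization of Theorem \ref{th:RT_aR*}. The only point requiring care is the bookkeeping of the Gamma-function factors and the Jacobian $2^{-n}$ of the substitution $r_k \mapsto r_k^2$; both cancel against the $2^n$ produced by the product $\alpha_p^2\, c_\lambda$, which is what makes the two displayed expressions agree.
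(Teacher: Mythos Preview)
Your proposal is correct and matches exactly what the paper does: the paper simply introduces this theorem with the sentence ``Now Theorem \ref{th:RT_aR*} for the case of the unit ball reads as follows'' and gives no separate proof, relying on the explicit value of $\alpha_p$ computed just above. Your substitution of $\alpha_p^2 c_\lambda = 2^n \Gamma(n+|p|+\lambda+1)/(p!\,\Gamma(\lambda+1))$ and the change of variables $r_k \mapsto r_k^2$ with Jacobian $2^{-n}$ are precisely the intended steps.
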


\section{Asymptotic geometric behavior of the $\T^n$-orbits in Reinhardt domains}
As before, let $D$ be a bounded logarithmically convex complete
Reinhardt domain with Bergman metric $ds^2_D$, associated Riemannian
metric $h_D$ and with $\|\cdot\|_D$ denoting the norm defined by
$h_D$ on tangent vectors.

Also, we will continue denoting with $\II$ the second fundamental
form of the foliation $\Oa$ by $\T^n$-orbits in $\widehat{D}$. As in
the case of the unit ball, by Proposition \ref{prop:II-quadratic},
$\II$ is completely determined by the vector fields:
\begin{eqnarray*}
  Q_k &=& \II\left(\frac{\partial}{\partial\theta_k},\frac{\partial}{\partial\theta_k}\right) \\
  Q_{kl} &=& \II\left(\frac{\partial}{\partial\theta_k} +
\frac{\partial}{\partial\theta_l},\frac{\partial}{\partial\theta_k}
+ \frac{\partial}{\partial\theta_l}\right).
\end{eqnarray*}
The norm of such vector fields was computed in the previous section
for the unit ball and such norm was related to the geodesic
curvature of suitable circles contained in complex geodesics. In
this section we will study the asymptotic behavior towards the
boundary of similar values for a more general Reinhardt domain.

As in the case of the unit ball, on our given Reinhardt domain, we
will consider for every $z\in \widehat{D}$ and $k=1,\dots,n$ the
curve:
$$
  \gamma_{z,k}(s) = (z_1,\dots,z_{k-1},e^{is}z_k,z_{k+1},\dots,z_n).
$$
Then, the proofs of Lemmas \ref{le:geodesics-O} and
\ref{le:Q-values} apply to our current more general setup without
change to conclude that $\gamma_{z,k}$ is an integral curve of
$\frac{\partial}{\partial\theta_k}$ and that we can write:
$$
    Q_k(z) = \II(\gamma_{z,k}'(0),\gamma_{z,k}'(0)) =
    \gamma_{z,k}''(0).
$$
As in the case of the unit ball, to better understand the asymptotic
behavior of the values of $Q_k$ one considers its normalized value
obtained by dividing by $\|\gamma_{z,k}'(0)\|_D^2 =
\|\frac{\partial}{\partial\theta_k}|_z\|_D^2$, i.e.:
$$
    \left\|\frac{\partial}{\partial\theta_k}\Big|_z\right\|_D^{-2}Q_k(z) =
    \left\|\frac{\partial}{\partial\theta_k}\Big|_z\right\|_D^{-2}\II\left(\frac{\partial}{\partial\theta_k}\Big|_z,\frac{\partial}{\partial\theta_k}\Big|_z\right)
    = \frac{\gamma_{z,k}''(0)}{\|\gamma_{z,k}'(0)\|_D^2},
$$
which now depends only on the direction associated to
$\gamma_{z,k}'(0) = \frac{\partial}{\partial\theta_k}|_z$ and not on
its magnitude. Moreover, the above identities show that
$\left\|\frac{\partial}{\partial\theta_k}\Big|_z\right\|_D^{-2}Q_k(z)$
measures both the extrinsic curvature of the foliation $\Oa$, given
by $\II$, and the curvature of $\gamma_{z,k}$, given by its
acceleration. For the unit ball it was proved that such vector field
is collinear with $\frac{\partial}{\partial r_k}$, and so to measure
its magnitude in that case it was enough to consider the norm of its
orthogonal projection onto $\frac{\partial}{\partial r_k}$. In our
more general setup,
$\left\|\frac{\partial}{\partial\theta_k}\right\|_D^{-2}Q_k$ may not
be collinear with $\frac{\partial}{\partial r_k}$, but we can still
consider the properties of the orthogonal projection of the first
onto the latter.

The previous discussion suggests to define for every $z\in
\widehat{D}$ and $k=1,\dots,n$:
$$
    \widehat{C}_k(z) = - \left\|\frac{\partial}{\partial\theta_k}\Big|_z\right\|_D^{-2}
            \left\|\frac{\partial}{\partial r_k}\Big|_z\right\|_D^{-1}
            h_D\left(Q_k(z),
            \frac{\partial}{\partial r_k}\Big|_z\right),
$$
which thus provides a measure of both the extrinsic curvature of the
foliation $\Oa$ on $\widehat{D}$ and the curvature of
$\gamma_{z,k}$.

Note that for the unit ball endowed with the Bergman metric, Theorem
\ref{th:ball-bergman-curv} shows that $\widehat{C}_k(z)$ is
precisely the geodesic curvature of $\gamma_{z,k}$ in the complex
geodesic $\phi_{z,k}$ considered in the previous section. Moreover,
such Theorem \ref{th:ball-bergman-curv} describes the asymptotic
behavior of $\widehat{C}_k$ towards the boundary in the case of the
unit ball. The main goal of this section is to prove that such
asymptotic behavior remains valid for suitable domains. More
precisely, we have the following result. We recall that $D$ is said
to have $\delta$ as a defining function if $D = \{z\in\C^n :
\delta(z) < 0\}$.

\begin{theorem}\label{th:Reinhardt-asymptotic}
Let $D$ be a bounded strictly pseudoconvex complete Reinhardt domain
with smooth boundary and with a smooth defining function $\delta$.
Then:
$$
    \widehat{C}_k(z) \rightarrow \frac{2}{\sqrt{n+1}}, \quad
    \mbox{ as } z \rightarrow u,
$$
for any $u\in \partial D$ such that $u_k \neq 0$ and
$\frac{\partial\delta}{\partial r_k}(u) \neq 0$.
\end{theorem}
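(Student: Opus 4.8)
The plan is to reduce $\widehat{C}_k$ to an explicit expression in the single metric coefficient $F_{kk}$ and then feed in Fefferman's boundary asymptotics of the Bergman kernel.

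First I would exploit the block-diagonal form of $h_D$ furnished by Theorem \ref{th:metric-Reinhardt}: in the coordinates $(r,\theta)$ one has $h_D(\partial_{r_k},\partial_{r_l}) = F_{kl}$, $h_D(\partial_{\theta_k},\partial_{\theta_l}) = F_{kl}\,r_k r_l$, and $h_D(\partial_{r_k},\partial_{\theta_l}) = 0$. In particular $\|\partial_{\theta_k}|_z\|_D^2 = F_{kk}\,r_k^2$ and $\|\partial_{r_k}|_z\|_D^2 = F_{kk}$, and $\partial_{r_k}$ is horizontal, since it frames $T\Oa^\perp$ by Corollary \ref{cor:framing}. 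Because all the fields involved are coordinate vector fields, their Lie brackets vanish and the Koszul formula collapses to
$$
h_D(Q_k,\partial_{r_k}) = h_D(\widehat{\nabla}_{\partial_{\theta_k}}\partial_{\theta_k},\partial_{r_k}) = -\frac{1}{2}\,\partial_{r_k}\!\left(F_{kk}\,r_k^2\right),
$$
where I use that $Q_k = \Ha(\widehat{\nabla}_{\partial_{\theta_k}}\partial_{\theta_k})$ and that pairing a horizontal projection with the horizontal field $\partial_{r_k}$ equals pairing $\widehat{\nabla}_{\partial_{\theta_k}}\partial_{\theta_k}$ with it. Substituting these three ingredients into the definition of $\widehat{C}_k$ yields the closed form
$$
\widehat{C}_k = \frac{\partial_{r_k}F_{kk}}{2\,F_{kk}^{3/2}} + \frac{1}{r_k\,\sqrt{F_{kk}}},
$$
which reduces the theorem to a question about the boundary behaviour of $F_{kk}$ and $\partial_{r_k}F_{kk}$.

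Next I would bring in Fefferman's expansion. Writing $\rho = -\delta > 0$ inside $D$ and choosing $\delta$ (hence $\rho$) to be $\T^n$-invariant, so a function of $r$ alone, strict pseudoconvexity gives $K_D(z,z) = \phi\,\rho^{-(n+1)} + \psi\log\rho$ with $\phi,\psi \in C^\infty(\overline{D})$ and $\phi(u) > 0$. Consequently
$$
\log K_D(z,z) = -(n+1)\log\rho + g, \qquad g = \log\!\left(\phi + \psi\,\rho^{n+1}\log\rho\right),
$$
where $g$ stays bounded and, crucially, its derivatives up to third order are of strictly lower order than the corresponding derivatives of $-(n+1)\log\rho$ near a boundary point with $\rho_k(u) := \partial_{r_k}\rho(u) \neq 0$. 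Differentiating the singular term and inserting into the formula $F_{kl} = \frac{1}{4}(\partial_{r_k}\partial_{r_l} + \delta_{kl}\,r_k^{-1}\partial_{r_k})\log K_D$ of Theorem \ref{th:metric-Reinhardt}, the dominant contributions are
$$
F_{kk} \sim \frac{(n+1)\,\rho_k^2}{4\,\rho^2}, \qquad \partial_{r_k}F_{kk} \sim -\frac{(n+1)\,\rho_k^3}{2\,\rho^3},
$$
the leading terms coming from $\partial_{r_k}^2$ and $\partial_{r_k}^3$ of $-(n+1)\log\rho$ respectively. Feeding these into the closed form above makes the second summand $\frac{1}{r_k\sqrt{F_{kk}}}$ of order $\rho \to 0$, while the first summand tends to $-2\,\mathrm{sign}(\rho_k)/\sqrt{n+1}$. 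Finally, completeness of the Reinhardt domain forces $\tau(D)$ to be a lower set, so increasing $r_k$ drives $z$ toward $\partial D$ and $\rho_k \le 0$; together with $\rho_k(u)\neq 0$ this gives $\mathrm{sign}(\rho_k) = -1$ and hence $\widehat{C}_k(z) \to 2/\sqrt{n+1}$.

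The step I expect to be the main obstacle is the rigorous error control in the previous paragraph: one must verify that after taking up to three $r_k$-derivatives of $\log K_D$, every contribution of the smooth factor $\phi$, the logarithmic factor $\psi\log\rho$, and the correction $g$ is genuinely $o(\rho^{-3})$, so that the ratios defining $\widehat{C}_k$ converge to the stated limit. This requires tracking how many derivatives $\rho^{n+1}\log\rho$ tolerates before producing singular terms, and checking that even in the worst cases of small $n$ those terms grow only like $\log\rho$ or a lower negative power of $\rho$, hence remain negligible against the $\rho^{-3}$ leading term. The remaining points — the Koszul computation, the completeness sign argument, and the identification $T_x\Oa^\perp = \R^n$ coming from the proof of Theorem \ref{th:O-Lag-Riem} — are routine given the earlier results.
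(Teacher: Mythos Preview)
Your proposal is correct and shares the paper's overall strategy: both first reduce $\widehat{C}_k$ to the closed form
$$
\widehat{C}_k \;=\; \frac{1}{2F_{kk}^{3/2}}\left(\frac{\partial F_{kk}}{\partial r_k} + \frac{2}{r_k}F_{kk}\right),
$$
which is exactly equation~\eqref{eq:Ck} in the paper's Lemma~\ref{le:Ck-Reinhardt}, and then invoke Fefferman's expansion (Theorem~\ref{th:Fefferman}). The difference lies only in the bookkeeping of the second step. The paper substitutes \eqref{eq:Fkk} into \eqref{eq:Ck} to rewrite $\widehat{C}_k$ as a rational expression in $K_D$ and its first three $r_k$-derivatives, then applies an inductive formula (Lemma~\ref{le:K-partials}) expanding each $\partial_{r_k}^j K_D$ as a finite sum $\sum_l \varphi_{jl}(-\delta)^{-(n+1+l)}$ plus logarithmic corrections, and finally collects the lowest powers of $\delta$ in numerator and denominator to obtain an explicit representation
$$
\widehat{C}_k = \frac{a(-\delta)^{-(3n+6)} + b\,\delta^{-(3n+5)}}{\bigl(c(-\delta)^{-(2n+4)} + d\,\delta^{-(2n+3)}\bigr)^{3/2}}
$$
with $a,c$ computed and $b,d$ continuous up to the relevant boundary points; the limit is then read off from $a/c^{3/2}$. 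Your route via $\log K_D = -(n+1)\log\rho + g$ is more direct: it isolates the singular part at once and extracts the leading asymptotics of $F_{kk}$ and $\partial_{r_k}F_{kk}$ without the inductive machinery, at the price of the error-control step you correctly flag as the main obstacle. The paper's approach is more laborious but packages the subleading terms explicitly, so that the limit follows without a separate remainder estimate. Your completeness-based argument fixing the sign of $\partial_{r_k}\delta$ at the boundary is a point the paper passes over in silence.
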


We observe that for the unit ball we can take $\delta(z) = -1 +
\sum_{j=1}^n |z_j|^2 = -1 + \sum_{j=1}^n r_j^2$, and so the
conditions $u_k \neq 0$ and $\frac{\partial\delta}{\partial r_k}(u)
\neq 0$ are equivalent in this case.

Note that Theorems \ref{th:Reinhardt-asymptotic} and
\ref{th:ball-bergman-curv} together show that, under suitable
convexity and smoothness conditions on the domain $D$, the extrinsic
geometry of the foliation $\Oa$ in $\widehat{D}$ has exactly the
same asymptotic behavior towards the boundary as the one found for
the unit ball, at least with respect to the values of $Q_k$.

To prove Theorem \ref{th:Reinhardt-asymptotic} we will use the
expression of the metric $h_D$ in terms of the Bergman kernel $K_D$
from Theorem \ref{th:metric-Reinhardt} and the following celebrated
result by C.~Fefferman that describes the Bergman kernel of strictly
pseudoconvex domains with smooth boundary. This result appears as a
Corollary in page 45 of \cite{Fefferman74}.

\begin{theorem}[C.~Fefferman]\label{th:Fefferman}
If $D$ is a strictly pseudoconvex domain with smooth boundary and
smooth defining function $\delta$, then there exists $\varphi, \psi
\in C^\infty(\overline{D})$ with $\varphi$ nonvanishing in $\partial
D$, such that:
$$
    K_D(z,z) = \varphi(z)(-\delta(z))^{-(n+1)} + \psi(z)\log(-\delta(z))
$$
for every $z\in D$.
\end{theorem}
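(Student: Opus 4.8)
The plan is to follow C.~Fefferman's analysis of the Bergman kernel near a strictly pseudoconvex boundary point, reducing the global statement to a local model computation together with sharp remainder estimates. First I would localize: since $K_D(z,z)$ is smooth in the interior and blows up only as $z \to \partial D$, it suffices to establish the claimed expansion in a one-sided neighborhood of each boundary point $p \in \partial D$ and then patch the local descriptions with a partition of unity on $\overline D$, the purely interior contribution being absorbed into the smooth coefficients $\varphi,\psi$. Near $p$ I would introduce Fefferman's approximate normalizing coordinates, a local biholomorphism straightening $\partial D$ so that, to high order in the distance to the boundary, $D$ osculates the model Siegel domain (equivalently the ball), whose strict pseudoconvexity is encoded in the nondegenerate Levi form at $p$.

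The heart of the construction is to build an approximate Bergman kernel, i.e. an explicit function $\Phi(z) = \varphi(z)(-\delta(z))^{-(n+1)} + \psi(z)\log(-\delta(z))$ with $\varphi,\psi \in C^\infty(\overline D)$ that reproduces holomorphic $L^2$ functions up to a smoothing error. For the model the Bergman kernel is explicit --- for the ball it is a constant multiple of $(1-|z|^2)^{-(n+1)}$ --- so it already exhibits the leading singularity $(-\delta)^{-(n+1)}$ and fixes the boundary value of $\varphi$ as a multiple of the Monge--Amp\`ere determinant of the Levi form; strict pseudoconvexity then forces $\varphi$ to be nonvanishing on $\partial D$. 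I would correct this leading term order by order in powers of $(-\delta)$: inserting the ansatz $\Phi$ into the orthogonality relations for the Bergman projection produces a hierarchy of transport equations for the boundary Taylor coefficients of $\varphi$, solvable recursively from the geometry of $\partial D$. Equivalently, following Boutet de Monvel--Sj\"ostrand, one realizes the (Szeg\H{o} and hence Bergman) projector as a Fourier integral operator with complex phase solving an eikonal equation governed by the Levi form, whose amplitude solves the corresponding transport equations; its diagonal singularity yields exactly the form $\varphi(-\delta)^{-(n+1)} + \psi\log(-\delta)$.

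With an approximate kernel in hand, I would estimate the remainder $R = K_D - \Phi$. The key analytic input is the $L^2$ theory of $\bar\partial$ on strictly pseudoconvex domains: Kohn's subelliptic estimates for the $\bar\partial$-Neumann problem (or H\"ormander's weighted $L^2$ estimates) give boundary regularity of the Bergman projection and quantitative control of the decay of the error toward $\partial D$. Running this as a bootstrap --- each correction term making the error vanish to one higher order at the boundary, and the subelliptic estimate converting improved orthogonality into improved boundary regularity of $R$ --- one concludes that $R$ is smooth up to $\overline D$ and may be folded into the smooth coefficients, establishing the stated expansion.

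The main obstacle is twofold and genuinely deep. Analytically, it is the sharp boundary regularity and remainder control: that the Bergman projection preserves $C^\infty(\overline D)$ and that the constructed error is smooth up to the boundary rests on the full strength of the $\bar\partial$-Neumann subelliptic estimates, which is where strict pseudoconvexity enters essentially. Structurally, the subtle point is the \emph{appearance of the logarithmic term}: the purely negative-power ansatz $\varphi(-\delta)^{-(n+1)}$ cannot in general be continued smoothly to all orders, and at a definite order the recursion for the coefficients of $\varphi$ meets an obstruction (Fefferman's obstruction, a local biholomorphic invariant of $\partial D$). It is precisely this obstruction that forces the correction $\psi\log(-\delta)$ into the expansion, and verifying that a single logarithmic term absorbs all higher-order obstructions is the delicate structural core of the theorem.
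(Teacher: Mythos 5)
The paper does not prove this statement at all: it is imported verbatim as a known theorem, with the explicit attribution ``This result appears as a Corollary in page 45 of \cite{Fefferman74}'', and is then used as a black box in the proofs of Lemma \ref{le:K-partials} and Theorem \ref{th:Reinhardt-asymptotic}. So there is no internal argument to compare yours against; what you have written is, in effect, a reconstruction of Fefferman's own proof, and it should be judged on those terms.

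As such a reconstruction, your outline names the correct ingredients and correctly isolates the two genuinely hard points: sharp boundary regularity of the remainder (resting on subelliptic estimates for the $\bar\partial$-Neumann problem, which is exactly where strict pseudoconvexity enters) and the logarithmic obstruction that forces the $\psi\log(-\delta)$ term. Your identification of $\varphi|_{\partial D}$ with a nonzero multiple of the Levi/Monge--Amp\`ere determinant, whence the nonvanishing of $\varphi$ on $\partial D$, is also right. But be aware of two caveats. First, as a proof this is a roadmap, not an argument: the solvability of the transport hierarchy, the construction of the complex-phase parametrix, and the bootstrap converting improved orthogonality into smoothness of $R$ up to $\overline{D}$ are each asserted rather than carried out, and each constitutes the substance of a long paper. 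Second, you conflate historically and technically distinct proofs: Fefferman's 1974 argument in \cite{Fefferman74} does not run an ansatz-plus-transport-equations scheme; it osculates $D$ along the boundary by balls (equivalently Siegel domains), takes the explicit ball kernel $c_n(1-|z|^2)^{-(n+1)}$ as leading approximation, and controls the error by direct integral estimates together with $L^2$ theory, with the log term emerging from the error analysis. The Fourier-integral-operator route with complex eikonal phase that you invoke is the later Boutet de Monvel--Sj\"ostrand proof, and the formal-series/transport picture with its order-by-order obstruction belongs to Fefferman's subsequent Monge--Amp\`ere work. Any one of these routes suffices, and in the Boutet de Monvel--Sj\"ostrand framework the fact that a single smooth log-coefficient $\psi$ absorbs all higher obstructions falls out of the structure of the parametrix rather than needing a separate verification; but your sketch, as written, commits to none of them in enough detail to stand as a proof.
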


We first express the value of $\widehat{C}_k$ in terms of the
Bergman kernel $K_D$.

\begin{lemma}\label{le:Ck-Reinhardt}
Let $D$ be a bounded logarithmically convex complete Reinhardt
domain with Bergman kernel $K_D$. Then:
$$
    \widehat{C}_k =
    \frac{2\left(\frac{\partial K_D}{\partial r_k}\right)^3 - 3K_D\frac{\partial K_D}{\partial r_k}\frac{\partial^2 K_D}{\partial r_k^2}
    + K_D^2\frac{\partial^3 K_D}{\partial r_k^3} - \frac{3K_D}{r_k}\left(\frac{\partial K_D}{\partial r_k}\right)^2
    + \frac{3K_D^2}{r_k}\frac{\partial^2 K_D}{\partial r_k^2} + \frac{K_D^2}{r_k^2}\frac{\partial K_D}{\partial r_k}}%
    {\left(-\left(\frac{\partial K_D}{\partial r_k}\right)^2 + K_D\frac{\partial^2 K_D}{\partial r_k^2}
    + \frac{K_D}{r_k}\frac{\partial K_D}{\partial r_k}\right)^{\frac{3}{2}}},
$$
where $K_D$ and its partial derivatives are computed for the
function $z \mapsto K_D(z,z)$.
\end{lemma}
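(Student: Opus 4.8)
The plan is to reduce the geometric quantity $\widehat{C}_k$ to the metric coefficients $F_{kl}$ of Theorem \ref{th:metric-Reinhardt}, and then substitute their expression in terms of $K_D$. Write $\gamma = \gamma_{z,k}$; as recalled just before the statement, $Q_k(z) = \gamma''(0) = \widehat{\nabla}_{\partial/\partial\theta_k}\,\partial/\partial\theta_k$ is the covariant acceleration of the integral curve of $\partial/\partial\theta_k$. Since $\partial/\partial r_k$ is horizontal (it frames $T\Oa^\perp$ by Corollary \ref{cor:framing}) and the horizontal projection $\Ha$ is orthogonal, we have $h_D(Q_k,\partial/\partial r_k) = h_D(\widehat{\nabla}_{\partial/\partial\theta_k}\partial/\partial\theta_k,\partial/\partial r_k)$, so that the definition of $\widehat{C}_k$ reads
\[
  \widehat{C}_k = -\left\|\frac{\partial}{\partial\theta_k}\right\|_D^{-2}\left\|\frac{\partial}{\partial r_k}\right\|_D^{-1} h_D\!\left(\widehat{\nabla}_{\partial/\partial\theta_k}\frac{\partial}{\partial\theta_k},\,\frac{\partial}{\partial r_k}\right).
\]

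The decisive simplification comes from the block structure of $h_D$ in Theorem \ref{th:metric-Reinhardt}: it has no mixed $dr$--$d\theta$ terms, so $h_D(\partial/\partial\theta_j,\partial/\partial r_k) = 0$ for all $j,k$, while $\|\partial/\partial\theta_k\|_D^2 = F_{kk}\,r_k^2$ and $\|\partial/\partial r_k\|_D^2 = F_{kk}$. Applying the Koszul formula to the commuting coordinate fields gives $2\,h_D(\widehat{\nabla}_{\partial/\partial\theta_k}\partial/\partial\theta_k,\partial/\partial r_k) = 2\,\frac{\partial}{\partial\theta_k}h_D(\partial/\partial\theta_k,\partial/\partial r_k) - \frac{\partial}{\partial r_k}h_D(\partial/\partial\theta_k,\partial/\partial\theta_k)$, and the first term vanishes since $h_D(\partial/\partial\theta_k,\partial/\partial r_k) = 0$. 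Hence
\[
  h_D\!\left(\widehat{\nabla}_{\partial/\partial\theta_k}\frac{\partial}{\partial\theta_k},\,\frac{\partial}{\partial r_k}\right) = -\frac{1}{2}\frac{\partial}{\partial r_k}\!\left(F_{kk}\,r_k^2\right),
\]
and substituting the two norms yields the compact intermediate formula
\[
  \widehat{C}_k = \frac{\frac{\partial}{\partial r_k}\!\left(F_{kk}\,r_k^2\right)}{2\,F_{kk}^{3/2}\,r_k^2}.
\]

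It remains to insert $F_{kk}$ from Theorem \ref{th:metric-Reinhardt}. Writing $K = K_D(z,z)$ as a function of $r$ and expanding $F_{kk} = \frac{1}{4}\big(\frac{\partial^2}{\partial r_k^2} + \frac{1}{r_k}\frac{\partial}{\partial r_k}\big)\log K$ by means of $\frac{\partial}{\partial r_k}\log K = \frac{1}{K}\frac{\partial K}{\partial r_k}$, one obtains $F_{kk} = G/(4K^2)$ with
\[
  G = -\left(\frac{\partial K}{\partial r_k}\right)^2 + K\frac{\partial^2 K}{\partial r_k^2} + \frac{K}{r_k}\frac{\partial K}{\partial r_k},
\]
which is precisely the base of the power in the denominator of the asserted formula. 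Substituting $F_{kk} = G/(4K^2)$, so that $F_{kk}^{3/2} = G^{3/2}/(8K^3)$ and $F_{kk}\,r_k^2 = G\,r_k^2/(4K^2)$, and carrying out $\frac{\partial}{\partial r_k}\!\big(G\,r_k^2/K^2\big)$, all powers of $K$ and $r_k$ cancel and the denominator reduces to $G^{3/2}$.

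The main obstacle is this final simplification: it is conceptually routine but requires careful bookkeeping. Expanding $\partial G/\partial r_k$ produces the third derivative $\partial^3 K/\partial r_k^3$ together with $1/r_k$ and $1/r_k^2$ contributions, and one must verify that the coefficients of $(\partial K/\partial r_k)^3$, of $K\,(\partial K/\partial r_k)(\partial^2 K/\partial r_k^2)$, of $K^2\,\partial^3 K/\partial r_k^3$, and of the three lower-order terms collapse to $2$, $-3$, $1$, $-3/r_k$, $3/r_k$, $1/r_k^2$, respectively, thereby reproducing the six-term numerator in the statement.
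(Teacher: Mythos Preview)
Your proof is correct and follows essentially the same route as the paper: both reduce $\widehat{C}_k$ to the intermediate formula $\widehat{C}_k = \dfrac{1}{2F_{kk}^{3/2}}\left(\dfrac{\partial F_{kk}}{\partial r_k} + \dfrac{2}{r_k}F_{kk}\right)$ using the block structure $h_{\theta_j r_k}=0$ and $h_{\theta_k\theta_k}=r_k^2F_{kk}$, $h_{r_k r_k}=F_{kk}$, and then substitute $F_{kk}=G/(4K^2)$ to obtain the stated expression. The only cosmetic difference is that the paper expands $Q_k$ in Christoffel symbols $\Gamma_{\theta_k\theta_k}^{r_l}$ and contracts against $h_{r_k r_l}$ (the inverse metric cancels), whereas you apply the Koszul formula directly to the pairing $h_D(\widehat{\nabla}_{\partial/\partial\theta_k}\partial/\partial\theta_k,\partial/\partial r_k)$; these are equivalent computations.
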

\begin{proof}
Let us denote with $\Gamma_{\theta_k \theta_l}^{r_j},\dots$, the
Schwarz-Christoffel symbols for the Levi-Civita connection of the
Riemannian manifold $(D,h_D)$ and with $h_{\theta_k
\theta_l},\dots$, the coordinate functions of the metric. By
Corollary \ref{cor:framing} and the definition of $\II$ it follows
that:
$$
    Q_k(z) = \sum_{l=1}^n \Gamma_{\theta_k \theta_k}^{r_l}\frac{\partial}{\partial r_l}.
$$
By using the well known formula that expresses the
Schwarz-Christoffel symbols in terms of the functions $h_{\theta_k
\theta_l},\dots$, and its partial derivatives (see \cite{Oneill83})
we have:
$$
    \Gamma_{\theta_k \theta_k}^{r_l} = -\frac{1}{2}
    \sum_{j=1}^n h^{r_l r_j}\frac{\partial h_{\theta_k \theta_k}}{\partial r_j},
$$
where, as usual, $h^{r_l r_j}$ denotes the entries of the inverse of
the matrix $(h_{r_l r_j})_{lj}$. We have used here that, by Theorem
\ref{th:metric-Reinhardt}, the functions $h_{\theta_k r_l} = 0$.
Hence, it follows that:
\begin{eqnarray*}
  \widehat{C}_k &=& - \left\|\frac{\partial}{\partial\theta_k}\Big|_z\right\|_D^{-2}
            \left\|\frac{\partial}{\partial r_k}\Big|_z\right\|_D^{-1}
            h_D\left(Q_k(z),
            \frac{\partial}{\partial r_k}\Big|_z\right) \\
   &=& \frac{1}{2}h_{\theta_k \theta_k}^{-1} h_{r_k r_k}^{-1/2}
   \sum_{l,j=1}^n h_{r_k r_l}h^{r_l r_j}\frac{\partial h_{\theta_k \theta_k}}{\partial r_j} \\
   &=& \frac{1}{2}h_{\theta_k \theta_k}^{-1} h_{r_k r_k}^{-1/2}
   \frac{\partial h_{\theta_k \theta_k}}{\partial r_k}.
\end{eqnarray*}
Again by Theorem \ref{th:metric-Reinhardt} we have $h_{r_k r_k} =
F_{kk}$ and $h_{\theta_k \theta_k} = r_k^2F_{kk}$, where:
\begin{equation} \label{eq:Fkk}
    F_{kk}(z) = \frac{1}{4}
        \left(\frac{\partial^2}{\partial r_k^2}
        + \frac{1}{r_k}\frac{\partial}{\partial r_k}
        \right) \log K_D(z,z),
\end{equation}
from which we obtain:
\begin{equation}\label{eq:Ck}
    \widehat{C}_k = \frac{1}{2F_{kk}^{3/2}}
        \left(\frac{\partial F_{kk}}{\partial r_k} + \frac{2}{r_k}F_{kk}\right).
\end{equation}
Then the result follows by computing $F_{kk}$ and $\frac{\partial
F_{kk}}{\partial r_k}$ in terms of $K_D$ with the use of equation
(\ref{eq:Fkk}) and replacing into equation (\ref{eq:Ck}).
\end{proof}

The following result can be proved easily using induction.

\begin{lemma}\label{le:K-partials}
Let $D$ be a strictly pseudoconvex domain with smooth boundary and
smooth defining function $\delta$. Let $\varphi, \psi \in
C^\infty(\overline{D})$ be the smooth functions from Theorem
\ref{th:Fefferman}. Then, for every $k=1,\dots,n$ and $j \geq 0$ we
have:
$$
    \frac{\partial^j K_D}{\partial r_k^j} =
        \sum_{l=0}^j \varphi_{jl} (-\delta)^{-(n+1+l)} +
        \sum_{l=1}^j \psi_{jl} \delta^{-l} +
        \psi_{j0}\log(-\delta),
$$
where the partial derivatives are computed for the function $z
\mapsto K_D(z,z)$, and the functions $\varphi_{jl}, \psi_{jl}$ are
given inductively by the following conditions:
\begin{enumerate}
  \item $\varphi_{00} = \varphi$, $\psi_{00} = \psi$,
  \item $\varphi_{jl} = \psi_{jl} = 0$ if either $j$ or $l$ is
    negative,
  \item for $j \geq 1$:
    \begin{eqnarray*}
    \varphi_{jj} &=& (n+j)\varphi_{j-1,j-1}\frac{\partial\delta}{\partial r_k} \\
    \psi_{jj} &=& -(j-1)\psi_{j-1,j-1}\frac{\partial\delta}{\partial r_k},
    \end{eqnarray*}
  \item for $0\leq l < j$:
    \begin{eqnarray*}
    \varphi_{jl} &=&    \frac{\partial \varphi_{j-1,l}}{\partial r_k} +
            (n+l)\varphi_{j-1,l-1}\frac{\partial\delta}{\partial r_k} \\
    \psi_{jl} &=&       \frac{\partial \psi_{j-1,l}}{\partial r_k}
            -(l-1)\psi_{j-1,l-1}\frac{\partial\delta}{\partial r_k},
    \end{eqnarray*}
\end{enumerate}
\end{lemma}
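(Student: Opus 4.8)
The plan is to prove the formula by induction on the order $j$ of differentiation; the whole point is that the three families of functions on the right-hand side are stable under $\partial/\partial r_k$, so that the shape of the expression is preserved and one only has to track how the coefficients propagate. For the base case $j=0$ the asserted identity reduces, after unwinding condition (1) and noting that the second sum $\sum_{l=1}^0$ is empty, to $K_D(z,z)=\varphi\,(-\delta)^{-(n+1)}+\psi\log(-\delta)$, which is exactly Fefferman's expansion from Theorem \ref{th:Fefferman} with $\varphi_{00}=\varphi$ and $\psi_{00}=\psi$. (Here one uses that on a complete Reinhardt domain $K_D(z,z)$ may be regarded as a function of $r$, by Lemma \ref{le:Bergman-Reinhardt}, and likewise the defining function on the diagonal, so that $\partial/\partial r_k$ acts meaningfully.) Thus there is nothing to prove at $j=0$ beyond quoting Fefferman.

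For the inductive step I would assume the formula at level $j$ and apply $\partial/\partial r_k$ term by term. The key structural observation is that differentiation sends each of the three building blocks into smooth multiples of the same three, with the exponent raised by one: using the chain rule and $\partial_{r_k}(-\delta)=-\partial_{r_k}\delta$, one checks that $\partial_{r_k}(-\delta)^{-(n+1+l)}$ is a smooth multiple of $(-\delta)^{-(n+1+(l+1))}$, that $\partial_{r_k}\delta^{-l}$ is a smooth multiple of $\delta^{-(l+1)}$, and that $\partial_{r_k}\log(-\delta)=\delta^{-1}\partial_{r_k}\delta$. Combining these with the terms obtained by differentiating the smooth coefficients $\varphi_{jl},\psi_{jl}$ themselves shows that $\partial^{j+1}K_D/\partial r_k^{j+1}$ again has precisely the asserted form, now with top index $j+1$; it then remains to match coefficients of each power to read off the recursions.

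The matching splits into two decoupled sectors, since the $(-\delta)^{-(n+1+l)}$-terms never generate $\delta^{-l}$ or logarithmic terms, and conversely. In the $\varphi$-sector the coefficient of $(-\delta)^{-(n+1+l)}$ at level $j+1$ receives a contribution $\partial_{r_k}\varphi_{j,l}$ from differentiating the coefficient of the same power, plus a contribution $(n+l)\,\varphi_{j,l-1}\,\partial_{r_k}\delta$ from the chain rule applied to the next-lower power $(-\delta)^{-(n+l)}$. The top power $l=j+1$ receives only the latter, which yields condition (3), while the intermediate powers $0\le l\le j$ receive both, yielding condition (4); all out-of-range indices vanish by condition (2). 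This sector is entirely routine.

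The delicate part — and the step I expect to be the main obstacle — is the bookkeeping in the $\psi$-sector, precisely because the logarithm couples to the first negative power. Differentiating $\psi_{j,0}\log(-\delta)$ contributes not only $\partial_{r_k}\psi_{j,0}\,\log(-\delta)$, which feeds $\psi_{j+1,0}$, but also the extra term $\psi_{j,0}\,\partial_{r_k}\delta\cdot\delta^{-1}$, which feeds the $\delta^{-1}$ coefficient $\psi_{j+1,1}$. One must therefore carry this logarithm-to-$\delta^{-1}$ transition alongside the ordinary chain-rule raising $\delta^{-l}\mapsto\delta^{-(l+1)}$ (which carries the factor $-l$) and the coefficient-derivative term, and assemble them correctly into the recursions for $\psi_{jl}$. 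This coupling at the bottom of the $\psi$-tower is the one place where the accounting is genuinely error-prone; once it is handled, matching the remaining coefficients is mechanical and closes the induction.
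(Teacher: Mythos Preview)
Your proposal is correct and is precisely the induction argument the paper has in mind; the paper itself offers no proof beyond the one-line remark that the lemma ``can be proved easily using induction,'' so your outline is already more detailed than what appears there. Your identification of the logarithm-to-$\delta^{-1}$ coupling in the $\psi$-sector as the one delicate bookkeeping point is exactly right.
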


The next result provides an expression of $\widehat{C}_k$ in terms
of the defining function $\delta$. We observe that Theorem
\ref{th:Reinhardt-asymptotic} is now an easy consequence of such
expression.

\begin{theorem}
Let $D$ be a bounded strictly pseudoconvex complete Reinhardt domain
with smooth boundary and with a smooth defining function $\delta$.
If $\varphi$ is the function given by Theorem~\ref{th:Fefferman},
then:
$$
    \widehat{C}_k = \frac{a(-\delta)^{-(3n+6)} + b\delta^{-(3n+5)}}{(c(-\delta)^{-(2n+4)} + d\delta^{-(2n+3)})^{3/2}}
$$
on $\widehat{D}$, where $a,b,c,d \in C^\infty(\widehat{D})$ satisfy:
\begin{enumerate}
  \item $a = 2(n+1)\varphi^3\left(\frac{\partial \delta}{\partial
        r_k}\right)^3$,
  \item $c = (n+1)\varphi^2\left(\frac{\partial \delta}{\partial
        r_k}\right)^2$,
  \item $b,d$ extend continuously to $\{z\in\overline{D}: z_k \neq
  0\}$.
\end{enumerate}
\end{theorem}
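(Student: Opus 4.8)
The plan is to feed the expansions of Lemma~\ref{le:K-partials} (for $j=0,1,2,3$) into the closed formula for $\widehat{C}_k$ from Lemma~\ref{le:Ck-Reinhardt}, and then to isolate, in both numerator and denominator, the two most singular powers of $(-\delta)$. First I would record the top-order coefficients. By Lemma~\ref{le:K-partials} the leading pole of $\frac{\partial^j K_D}{\partial r_k^j}$ is $\varphi_{jj}(-\delta)^{-(n+1+j)}$, and iterating the recursion $\varphi_{jj}=(n+j)\varphi_{j-1,j-1}\frac{\partial\delta}{\partial r_k}$ from $\varphi_{00}=\varphi$ yields
$$
\varphi_{jj}=\frac{(n+j)!}{n!}\,\varphi\left(\frac{\partial\delta}{\partial r_k}\right)^{j},\qquad j=0,1,2,3.
$$

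Next I would extract the leading terms. In the quantity under the $3/2$ power of Lemma~\ref{le:Ck-Reinhardt}, the summands $-\left(\frac{\partial K_D}{\partial r_k}\right)^2$ and $K_D\frac{\partial^2 K_D}{\partial r_k^2}$ each contribute a pole of order $(-\delta)^{-(2n+4)}$, whereas $\frac{K_D}{r_k}\frac{\partial K_D}{\partial r_k}$ is of strictly lower order $(-\delta)^{-(2n+3)}$; collecting the top order gives $c=-\varphi_{11}^2+\varphi\,\varphi_{22}=(n+1)\varphi^2\left(\frac{\partial\delta}{\partial r_k}\right)^2$, which is statement~(2) and rests only on the cancellation $-(n+1)+(n+2)=1$. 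In the numerator the most singular pole is $(-\delta)^{-(3n+6)}$, reached only by the three summands $2\left(\frac{\partial K_D}{\partial r_k}\right)^3$, $-3K_D\frac{\partial K_D}{\partial r_k}\frac{\partial^2 K_D}{\partial r_k^2}$ and $K_D^2\frac{\partial^3 K_D}{\partial r_k^3}$. Their combined top coefficient is $a=2\varphi_{11}^3-3\varphi\,\varphi_{11}\varphi_{22}+\varphi^2\varphi_{33}$, and substituting the values above reduces the $n$-dependent bracket via the numerical identity $2(n+1)^2-3(n+1)(n+2)+(n+2)(n+3)=2$, so that $a=2(n+1)\varphi^3\left(\frac{\partial\delta}{\partial r_k}\right)^3$, which is statement~(1). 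As a consistency check, $a/c^{3/2}=2/\sqrt{n+1}$, recovering the limit in Theorem~\ref{th:Reinhardt-asymptotic}.

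It then remains to absorb all lower-order contributions into $b$ and $d$. I would write the numerator as $a(-\delta)^{-(3n+6)}$ plus a remainder whose most singular pole is $(-\delta)^{-(3n+5)}$, and define $b$ by requiring $b\,\delta^{-(3n+5)}$ to equal that remainder; similarly $d$ for the denominator remainder of order $(-\delta)^{-(2n+3)}$. A term count shows that the surviving contributions are of three kinds: genuine poles of order at most $(-\delta)^{-(3n+5)}$ (resp.\ $(-\delta)^{-(2n+3)}$); products of poles with $\log(-\delta)$, whose most singular instance is $(-\delta)^{-(2n+5)}\log(-\delta)$ in the numerator and $(-\delta)^{-(n+3)}\log(-\delta)$ in the denominator, both strictly less singular than the factored power because $n\geq 1$; and terms carrying the factors $1/r_k$ or $1/r_k^2$. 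After dividing out $\delta^{-(3n+5)}$ (resp.\ $\delta^{-(2n+3)}$), every such lower pole and every $\log(-\delta)$ acquires a positive power of $(-\delta)$ and hence tends to $0$ continuously at $\partial D$; the coefficients $\varphi_{jl},\psi_{jl}$ are smooth up to the boundary by Theorem~\ref{th:Fefferman}, and $1/r_k,1/r_k^2$ are smooth wherever $z_k\neq 0$. This gives $b,d\in C^\infty(\widehat{D})$ extending continuously to $\{z\in\overline{D}:z_k\neq 0\}$, which is statement~(3).

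The work here is essentially bookkeeping. The hard part will be to confirm that the three numerator summands and two denominator summands listed above are the only ones that reach top order, that no pole or $\log$-pole strictly more singular than the factored power survives in either remainder, and that the $1/r_k$-type factors — which enter solely through the manifestly subleading summands $\frac{K_D}{r_k}\frac{\partial K_D}{\partial r_k}$, $\frac{3K_D^2}{r_k}\frac{\partial^2 K_D}{\partial r_k^2}$, $\frac{K_D^2}{r_k^2}\frac{\partial K_D}{\partial r_k}$ and $-\frac{3K_D}{r_k}\left(\frac{\partial K_D}{\partial r_k}\right)^2$ — never contaminate the leading coefficients $a$ and $c$. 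Once this is verified, the displayed expression for $\widehat{C}_k$ follows at once.
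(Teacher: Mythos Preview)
Your proposal is correct and follows precisely the route the paper takes: substitute the expansions of Lemma~\ref{le:K-partials} into the formula of Lemma~\ref{le:Ck-Reinhardt}, read off the coefficients $a,c$ of the lowest powers of $\delta$ in numerator and denominator, and observe that the remaining terms---carrying higher powers of $\delta$, factors $\delta^l\log(-\delta)$ with $l\geq 1$, and the $1/r_k$, $1/r_k^2$ contributions---extend continuously to $\{z\in\overline{D}:z_k\neq 0\}$. Your explicit computation of $a$ and $c$ via the recursion for $\varphi_{jj}$ and the order-counting for the log and $1/r_k$ terms simply makes precise what the paper summarizes as ``a matter of identifying the coefficients.''
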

\begin{proof}
We use Lemmas \ref{le:Ck-Reinhardt} and \ref{le:K-partials} to
express $\widehat{C}_k$ in terms of $\log(-\delta)$ and powers of
$\delta$ with smooth coefficients in $\widehat{D}$. Then the result
is simply a matter of identifying the coefficients in such
expression. The functions $a,c$ correspond to the lowest powers of
$\delta$ in the numerator and the denominator, respectively. The
functions $b,d$ involve terms of the form $\delta^l$ and $\delta^l
\log(-\delta)$ with $l\geq 1$ and powers of $1/r_k$, all of which
can be extended continuously to $\{z\in\overline{D}: z_k \neq 0\}$.
\end{proof}


\begin{thebibliography}{10}

\bibitem{Fefferman74}
C.~Fefferman.
\newblock The Bergman kernel and biholomorphic mappings
of pseudoconvex domains.
\newblock {\em Invent. Math.}, 26:1--65, 1974.


\bibitem{Goldman99}
W.M.~Goldman.
\newblock {\em Complex hyperbolic geometry}.
\newblock Oxford Mathematical Monographs. Oxford Science Publications. The
Clarendon Press, Oxford University Press, New York, 1999.

\bibitem{GrudKarapVasil03}
S.~Grudsky, A.~Karapetyants, and N.~Vasilevski.
\newblock Toeplitz operators on the unit ball in $\mathbf{{C}}^n$ with radial
  symbols.
\newblock {\em J. Operator Theory}, 49:325--346, 2003.

\bibitem{GrudQuirogaVasil05}
S.~Grudsky, R.~Quiroga-Barranco, and N.~Vasilevski.
\newblock Commutative {$C^*$}-algebras of {T}oeplitz operators and quantization
  on the unit disk.
\newblock {\em J. Funct. Anal.}, 234(1):1--44, 2006.

\bibitem{Helgason01}
S.~Helgason.
\newblock {\em Differential geometry, Lie groups, and symmetric spaces}.
\newblock American Mathematical Society, Providence, RI, 2001.

\bibitem{KobNomizu96}
S.~Kobayashi and K.~Nomizu.
\newblock {\em Foundation of Differential Geometry}, volume~I.
\newblock John Wiley \& Sons, Inc., New York, 1996.

\bibitem{KobNomizu96-2}
S.~Kobayashi and K.~Nomizu.
\newblock {\em Foundations of differential Geometry}, volume~II.
\newblock John Wiley \& Sons, Inc., New York, 1996.

\bibitem{Molino88}
P.~Molino.
\newblock {\em Riemannian foliations}.
\newblock Birkhauser Boston, Inc., Boston, MA, 1988.

\bibitem{Oneill83}
B.~O'Neill.
\newblock Semi-Riemannian geometry. With applications to relativity.
\newblock Academic Press, Inc., New York, 1983.

\bibitem{Quiroga-Ann}
R.~Quiroga-Barranco.
\newblock Isometric actions of simple {Lie} groups on pseudo{R}iemannian
  manifolds.
\newblock {\em Ann. of Math.}, 164:941-969, 2006.

\bibitem{Range86}
R.~M. Range.
\newblock {\em Holomorphic functions and integral representations in several
  complex variables}.
\newblock Springer-Verlag, New York, 1986.

\bibitem{Vasil01}
N.~L. Vasilevski.
\newblock Toeplitz operators on the {B}ergman spaces: Inside-the-domain
  effects.
\newblock {\em Contemp. Math.}, 289:79--146, 2001.

\bibitem{Vasil03a}
N.~L. Vasilevski.
\newblock Bergman space structure, commutative algebras of {T}oeplitz operators
  and hyperbolic geometry.
\newblock {\em Integr. Equat. Oper. Th.}, 46:235--251, 2003.

\bibitem{Warner83}
F.~W. Warner.
\newblock {\em Foundations of differentiable manifolds and Lie groups}.
\newblock Springer-Verlag, New York-Berlin, 1983.

\bibitem{Zhu05}
K. Zhu.
\newblock {\em Spaces of Holomorphic Functions in the Unit Ball}.
\newblock Springer Verlag, 2005.

\end{thebibliography}
\end{document}